\newcommand{\R}{\mathbb R}
\newtheorem{thm}{Theorem}[section]
\newtheorem{cor}{Corollary}[section]
\newtheorem{lemma}{Lemma}[section]
\newtheorem{proposition}{Proposition}[section]
\theoremstyle{remark}
\newtheorem*{rmk}{Remark}
\numberwithin{equation}{section}
\newcommand{\restr}[1]{|_{#1}}
\begin{document}


\title{On affine invariant and local Loomis-Whitney type inequalities}

\author[D.\,Alonso]{David Alonso-Guti\'errez}
\address{\'Area de an\'alisis matem\'atico, Departamento de matem\'aticas, Facultad de Ciencias, Universidad de Zaragoza, Pedro Cerbuna 12, 50009 Zaragoza (Spain), IUMA}
\email[(David Alonso)]{alonsod@unizar.es}

\author[J.\,Bernu\'es]{Julio Bernu\'es}
\address{\'Area de an\'alisis matem\'atico, Departamento de matem\'aticas, Facultad de Ciencias, Universidad de Zaragoza, Pedro cerbuna 12, 50009 Zaragoza (Spain), IUMA}
\email[(Julio Bernu\'es)]{bernues@unizar.es}

\author[S. Brazitikos]{Silouanos Brazitikos}
\address{School of Mathematics and Maxwell Institute for Mathematical
	Sciences, University of Edinburgh, JCMB, Peter Guthrie Tait Road King's Buildings,
	Mayfield Road, Edinburgh, EH9 3FD, Scotland.}
\email[(Silouanos Brazitikos)]{silouanos.brazitikos@ed.ac.uk}

\author[A. Carbery]{Anthony Carbery}
\address{School of Mathematics and Maxwell Institute for Mathematical
	Sciences, University of Edinburgh, JCMB, Peter Guthrie Tait Road King's Buildings,
	Mayfield Road, Edinburgh, EH9 3FD, Scotland.}
\email[(Anthony Carbery)]{A.Carbery@ed.ac.uk}
\subjclass[2010]{Primary 52A23, Secondary 60D05}
\thanks{Partially supported by Spanish grants MTM2016-77710-P and DGA E-64}
\begin{abstract}
We prove various extensions of the Loomis-Whitney inequality and its dual, where the subspaces on which the projections (or sections) are considered are either spanned by vectors $w_i$ of a not necessarily orthonormal basis of $\R^n$, or their orthogonal complements. In order to prove such inequalities we estimate the constant in the Brascamp-Lieb inequality in terms of the vectors $w_i$. Restricted and functional versions of the inequality will also be considered.
\end{abstract}

\date{\today}
\maketitle
\section{Introduction and notation}

The classical Loomis-Whitney inequality \cite{LW} states that for any convex body $K\subseteq\R^n$ (i.e., a compact convex set with non-empty interior) and any orthonormal basis $\{e_i\}_{i=1}^n$, we have that
\begin{equation}\label{eq:ClassicalLoomisWhitney}
|K|\leqslant\prod_{i=1}^n|P_{e_i^\perp}K|^\frac{1}{n-1},
\end{equation}
where $|\cdot|$ denotes the volume (i.e. the Lebesgue measure) in the corresponding subspace and, for any $k$-dimensional linear subspace $H\in G_{n,k}$, $P_H$ denotes the orthogonal projection onto $H$.

In \cite{M}, Meyer proved the following dual inequality:
\begin{equation}\label{eq:MeyersInequality}
|K|\geqslant\frac{(n!)^\frac{1}{n-1}}{n^\frac{n}{n-1}}\prod_{i=1}^n|K\cap e_i^\perp|^\frac{1}{n-1}.
\end{equation}

A remarkable extension of the Loomis-Whitney inequality is provided by the Bollob\'as-Thomason inequality, which was proved in \cite{BT}. Before stating it let us fix some notation and terminology. We denote by $[n]$ the set $\{1,\dots, n\}$. For any $m\geqslant 1$ and subsets $S_1,\dots, S_m\subseteq[n]$, not necessarily distinct, we  say that {\sl $(S_1,\dots, S_m)$ forms a uniform cover of $S\subseteq[n]$ with weights $(p_1,\dots p_m)$, ($p_j>0$ for all $j$)} if for every $i\in S$ we have that
$$
\sum_{j=1}^m p_j\chi_{S_j}(i)=1.
$$

Bollob\'as and Thomason considered the case $S=[n]$ and weights equal to $\frac{1}{k}$. Then, the above condition means that each index $i\in [n]$ appears exactly $k$ times within the family $S_1,\dots ,S_m$. They proved that for any uniform cover $(S_1,\dots, S_m)$  of $[n]$ with equal weights $\left(\frac{1}{k},\dots,\frac{1}{k}\right)$, any orthonormal basis $(e_i)_{i=1}^n$ and subspaces $H_j:=\textrm{span}\{e_k\,:\,k\in S_j\}$, for any {\sl compact} set $K\subseteq\R^n$ we have
\begin{equation}\label{eq:Bollobas-Thomason}
|K|\leqslant \prod_{j=1}^m|P_{H_j}K|^\frac{1}{k}.
\end{equation}
The case $m=n$, $S_j=[n]\setminus{\{j\}}$ and $k=n-1$ recovers the classical Loomis-Whitney inequality \eqref{eq:ClassicalLoomisWhitney}.

Very recently, Liakopoulos \cite{L} proved the following dual Bollob\'as-Thomason inequality: given a fixed orthonormal basis $(e_i)_{i=1}^n$ and any $m\geqslant 1$, if $(S_1,\dots, S_m)$ forms a uniform cover of $[n]$ with equal weights $\left(\frac{1}{k},\dots,\frac{1}{k}\right)$, $d_j=|S_j|$ and $H_j=\textrm{span}\{e_k\,:\,k\in S_j\}$, then for any compact $K\subseteq\R^n$ we have
\begin{equation}\label{eq:LiakopoulosDual}
|K|\geqslant \frac{\prod_{j=1}^m(d_j!)^\frac{1}{k}}{n!}\prod_{j=1}^m|K\cap H_j|^\frac{1}{k}.
\end{equation}
Again, if $m=n$, $S_j=[n]\setminus{\{j\}}$ and $k=n-1$ we obtain Meyer's inequality \eqref{eq:MeyersInequality}.

In \cite{GHP}, the following restricted (or local) version of Loomis-Whitney inequality was proved: given a fixed orthonormal basis $(e_i)_{i=1}^n$, for any convex body $K\subseteq\R^n$ and any $i\neq j$,
\begin{equation}\label{eq:FirstLocalLoomisWhitney}
|P_{H^\perp}K||K|\leqslant \frac{2(n-1)}{n}|P_{e_i^\perp}K||P_{e_j^\perp}K|,
\end{equation}
where  $H=\textrm{span}\{e_i,e_j\}$.
Inequality \eqref{eq:FirstLocalLoomisWhitney} has been extended in several different ways. On the one hand, in \cite{BGL}, it was proved that for any distinct vectors $w_1,w_2\in S^{n-1}$, not necessarily orthogonal, if $H=\textrm{span}\{w_1,w_2\}$ then
\begin{equation}\label{eq:LocalLoomisWhitney2NonOrthogonalVectors}
|P_{H^\perp}K||K|\leqslant\frac{2(n-1)}{n\sqrt{1-\langle w_1,w_2\rangle^2}}|P_{w_1^\perp}K||P_{w_2^\perp}K|.
\end{equation}
On the other hand, in the same work \cite{BGL}, the following generalisation of \eqref{eq:FirstLocalLoomisWhitney} was obtained: if $S\subseteq[n]$ has  cardinality $|S|=d$ and $(S_1,\dots, S_m)$ forms a uniform cover of $S$ with the same weights $(\frac{1}{k},\dots,\frac{1}{k})$,  where $m>k$, then for every convex body $K\subseteq\R^n$
$$
|P_{H^\perp}K||K|^{\frac{m}{k}-1}\leqslant\frac{{{n-\frac{kd}{m}}\choose{n-d}}^\frac{m}{k}}{{{n\choose d}}^{\frac{m}{k}-1}}\prod_{j=1}^m|P_{H_j^\perp}K|^\frac{1}{k},
$$
where $H_j=\textrm{span}\{e_k\,:\,k\in S_j\}$. The value of the constant in the latter inequality was improved in \cite{AAGJV} in the case where $m=2$ and $S_1$, $S_2$ are disjoint, which implies $k=1$. Moreover, in \cite{AAGJV} functional versions of these inequalities were proved in the setting of log-concave functions.

Regarding dual versions of restricted Loomis-Whitney inequalities, two situations have been considered: when the convex body $K$ is centred and when the maximal intersection of $K$ with translations of $H^\perp$ is attained at 0.

On the one hand, it was proved in \cite{BGL} that if $S\subseteq[n]$ has cardinality $|S|=d$ and $(S_1,\dots, S_m)$ forms a uniform cover of $S$ with the same weights $(\frac{1}{k},\dots,\frac{1}{k})$, where $m>k$ and the cardinality of $S_j$ is equal to $d_j$, then for every centred convex body $K\subseteq\R^n$
$$
|K\cap H^\perp||K|^{\frac{m}{k}-1}\geqslant\frac{\prod_{j=1}^md_j^\frac{d_j}{k}}{(c_0d)^{d}}\prod_{j=1}^m|K\cap H_j^\perp|^\frac{1}{k},
$$
where $H_j=\textrm{span}\{e_k\,:\,k\in S_j\}$.

On the other hand, it was proved in \cite{AAGJV} that if $S\subseteq[n]$ has cardinality $|S|=d$ and $S_1$ and $S_2$ are two disjoint subsets of $S$, with cardinality $d_1$ and $d_2$ respectively, forming a uniform cover of $S$ (therefore with weights equal to 1), and if $H=\textrm{span}\{e_k\,:\,k\in S\}$, then for every convex body $K$ such that $\max_{x\in\R^n}|K\cap(x+H^\perp)|=|K\cap H^\perp|$ we have that
\begin{equation}\label{eq:dualRestrictedAAGJV}
|K\cap H^\perp||K|\geqslant{{d}\choose{d_1}}^{-1}|K\cap H_1^\perp||K\cap H_2^\perp|,
\end{equation}
where $H_j=\textrm{span}\{e_k\,:\,k\in S_j\}$ and $H=\textrm{span}\{e_k\,:\,k\in S\}$. Also a functional version of this inequality was obtained for log-concave functions.

In this manuscript we consider the more general situation, in the spirit of the extension \eqref{eq:LocalLoomisWhitney2NonOrthogonalVectors} of inequality \eqref{eq:FirstLocalLoomisWhitney}, in which we fix a basis in $\R^n$ which is not necessarily orthonormal.

Let us point out the fact that $(S_1,\dots, S_m)$ forms a uniform cover of $[n]$ with weights $(p_1,\dots p_m)$ if and only if $(S_1^c,\dots, S_m^c)$ forms a uniform cover of $[n]$ with weights $(p_1^\prime,\dots,p_m^\prime)$, where $p_j^\prime=\frac{p_j}{p-1}$ and $p=\sum_{j=1}^m p_j$. In addition,  if $\{e_i\}_{i=1}^n$ is an orthonormal basis of $\R^n$, $H_j=\textrm{span}\{e_k\,:\,k\in S_j\}$ and $\widetilde{H}_j=\textrm{span}\{e_k\,:\,k\not\in S_j\}$, then $H_j^\perp=\widetilde{H}_j$ and hence the projections onto the subspaces which are orthogonal to the ones generated by the vectors given by the uniform cover are simply the projections onto the subspaces generated by the vectors given by the uniform cover $(S_1^c,\dots, S_m^c)$  with weights $(p_1^\prime,\dots,p_m^\prime)$. However, if $\{w_i\}_{i=1}^n$ is not an orthonormal basis, it is not generally the case that the orthogonal subspace to $H_j=\textrm{span}\{w_k\,:\,k\in S_j\}$ is $\widetilde{H}_j=\textrm{span}\{w_k\,:\,k\not\in S_j\}$. Therefore, different extensions of the inequalities can be considered.

Our starting point is the following functional inequality due to Finner \cite{finner}, which recovers the Bollob\'as and Thomason inequality.
\begin{thm}[Finner's inequality]\label{thm:invariant finner's ineq}
	Let $(S_1,\dots, S_m)$ be a uniform cover of $[n]$ with weights $(p_1,\dots,p_m)$ and let $H_j=\textrm{span}\{e_k\,:\,k\in S_j\}$. Then, for all integrable functions $f_j:H_j\to[0,\infty)$ we have
	\begin{equation}\label{eq:ClassicalFinner}
	\int_{\R^n}\prod_{j=1}^{m}f_j^{p_j}(P_{H_j} x)dx \leqslant \prod_{j=1}^m\left(\int_{H_j}f_j\right)^{p_j}.
	\end{equation}
\end{thm}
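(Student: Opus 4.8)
The plan is to prove \eqref{eq:ClassicalFinner} by induction on the dimension $n$, disintegrating along the last coordinate axis and combining Fubini's theorem with the classical (generalised) Hölder inequality. At the outset one may assume that all weights $p_j$ are positive and discard any $j$ with $S_j=\emptyset$, since such a factor is the constant $\left(\int_{H_j}f_j\right)^{p_j}$ and splits off from both sides without affecting the uniform cover condition. For $n=1$ every remaining $S_j$ equals $\{1\}$, and the uniform cover condition is exactly $\sum_{j=1}^m p_j=1$; applying Hölder's inequality to $g_j:=f_j^{p_j}$ (so that $g_j^{1/p_j}=f_j$) gives $\int_\R\prod_j f_j^{p_j}\leqslant\prod_j\left(\int_\R f_j\right)^{p_j}$, which is the claim.

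For the inductive step write $\R^n=\R^{n-1}\times\R$, with $x_n$ the coordinate on the last factor, and split the indices of the cover into $A=\{j:n\in S_j\}$ and $B=\{j:n\notin S_j\}$. Fixing $x_n=t$, for $j\in A$ introduce the slice $g_j^t$ on $H_j':=\textrm{span}\{e_k:k\in S_j\setminus\{n\}\}$ by $g_j^t(z)=f_j(z+te_n)$, and for $j\in B$ keep $f_j$ on $H_j\subseteq\R^{n-1}$. The combinatorial heart of the argument is the observation that $(S_j\setminus\{n\})_{j\in A}$ together with $(S_j)_{j\in B}$ is a uniform cover of $[n-1]$ with the same weights $(p_j)$: for $i\in[n-1]$ one has $i\in S_j\setminus\{n\}\Leftrightarrow i\in S_j$, so the relevant weights still sum to $\sum_{j:i\in S_j}p_j=1$. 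Hence the inductive hypothesis applies in $\R^{n-1}$ to the inner integral, giving, for each fixed $t$,
$$\int_{\R^{n-1}}\prod_{j\in A}(g_j^t)^{p_j}(P_{H_j'}x')\prod_{j\in B}f_j^{p_j}(P_{H_j}x')\,dx'\leqslant\prod_{j\in A}\left(\int_{H_j'}g_j^t\right)^{p_j}\prod_{j\in B}\left(\int_{H_j}f_j\right)^{p_j}.$$

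It remains to integrate this in $t$. The factors indexed by $B$ are independent of $t$ and come out; for the remaining factor the uniform cover condition at the index $n$ says $\sum_{j\in A}p_j=1$, so Hölder's inequality applied in $t$, followed by Fubini's theorem and the identification $H_j=H_j'\oplus\R e_n$ for $j\in A$, yields
$$\int_\R\prod_{j\in A}\left(\int_{H_j'}g_j^t\right)^{p_j}dt\leqslant\prod_{j\in A}\left(\int_\R\int_{H_j'}g_j^t(z)\,dz\,dt\right)^{p_j}=\prod_{j\in A}\left(\int_{H_j}f_j\right)^{p_j}.$$
Multiplying the last two displays gives \eqref{eq:ClassicalFinner}.

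The argument is routine once the slicing is set up; the only genuine point requiring care — and really the crux of the matter — is the bookkeeping showing that the uniform cover structure descends to $[n-1]$ after deleting the coordinate $n$, and that the weights of the sets containing $n$ sum to exactly $1$, so that neither application of Hölder loses a constant. (Alternatively, \eqref{eq:ClassicalFinner} may be recognised as the Brascamp--Lieb inequality with linear maps the coordinate projections $P_{H_j}$, whose sharp constant is $1$; but the self-contained induction above is shorter and is what we shall use in what follows.)
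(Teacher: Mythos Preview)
Your inductive proof is correct and is the standard argument for Finner's inequality (essentially Finner's own proof, and the classical proof of the Loomis--Whitney/Bollob\'as--Thomason inequalities). The bookkeeping you flag as the crux is handled correctly: deleting the coordinate $n$ leaves a uniform cover of $[n-1]$, and the covering condition at $n$ gives $\sum_{j\in A}p_j=1$, so both H\"older steps are sharp. One tiny point worth making explicit in the inductive step: if some $j\in A$ has $S_j=\{n\}$, then $H_j'=\{0\}$ and the corresponding factor $g_j^t$ is the constant $f_j(te_n)$; this pulls out of the $\R^{n-1}$ integral, the inductive hypothesis applies to the remaining factors (which still cover $[n-1]$), and the subsequent H\"older step in $t$ correctly reassembles $\int_{H_j}f_j$.

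The paper itself does not prove this theorem: it is quoted as a known result of Finner and used as the starting point for the affine generalisation. That said, the paper's factorisation-method proof of Theorem~\ref{thm:AffineFinner} specialises (taking $w_i=e_i$) to a different proof of \eqref{eq:ClassicalFinner}: one writes an arbitrary $M\in L^{p'}$ as a telescoping product of conditional densities $\gamma_i$ along successive coordinate directions, groups these into factors $M_j$, and applies duality plus H\"older. Your induction is shorter and more transparent for the coordinate case; the factorisation approach has the advantage that it adapts to non-orthonormal bases without first reducing to Finner, which is exactly what the paper exploits.
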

In the framework of subspaces generated by vectors  given by a uniform  cover with weights and a (not necessarily orthonormal) basis of $\R^n$, we obtain the following affine-invariant Finner inequality.

\begin{thm}\label{thm:affineFinnerIntro}
 Fix a basis $\{w_i\}_{i=1}^n$ of $\R^n$ and let $(S_1,\dots, S_m)$ form a uniform cover of $[n]$ with weights $(p_1,\dots, p_m)$. If $\widetilde{H}_j={\rm span}\{w_k\,:\,k\not\in S_j\}$ and $p=\sum_{j=1}^mp_j$, then for any integrable $f_j:\widetilde{H}_j^\perp\to[0,\infty)$ we have
	\begin{equation}\label{eq:affineFinner}
	\int_{\R^n}\prod_{j=1}^mf_j^{p_j}(P_{\widetilde{H}_j^\perp}x)dx\leqslant BL_1((w_i)_{i=1}^n,[n],(S_j)_{j=1}^m,(p_j)_{j=1}^m)\prod_{j=1}^m\left(\int_{\widetilde{H}_j^\perp}f_j(x)dx\right)^{p_j},
	\end{equation}
	where, for any  basis $\{w_i\}_{i=1}^n$ of $\R^n$, $S\subseteq[n]$ and any uniform cover $(S_1,\dots, S_m)$ of $S$ with weights $(p_1,\dots, p_m)$, we denote
	\begin{equation}
	BL_1((w_i)_{i=1}^n,S,(S_j)_{j=1}^m,(p_j)_{j=1}^m):=\frac{\prod_{j=1}^m|\wedge_{k\in S\setminus S_j}w_k|^{p_j}}{|\wedge_{i\in S}w_i|^{p-1}}.
	\end{equation}
Here, for any vectors $w_1,\dots,w_k$, $|\wedge_{i=1}^k w_i|$ denotes the volume of the $k$-dimensional parallelepiped spanned by the vectors $w_i$.\\
Moreover, for every integrable $f\colon \R^n\to[0,\infty)$ and $f_j:\widetilde{H}^{\perp}_j\to[0,\infty)$ such that $f(x)\geqslant\prod_{j=1}^mf_j^{p_j}(x_j)$ whenever  $x=\sum_{j=1}^m p_jx_j$ for some $x_j\in \widetilde{H}_j^{\perp}$, we have
$$
\int_{\R^n}f(x)dx\geqslant\frac{1}{ BL_1([n],(S_j)_{j=1}^m,(p_j)_{j=1}^m)}\prod_{j=1}^m\left(\int_{\widetilde{H}_j^{\perp}}f_j(x)dx\right)^{p_j}.
$$
\end{thm}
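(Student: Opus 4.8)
The plan is to deduce the upper bound from Finner's inequality (Theorem~\ref{thm:invariant finner's ineq}) and the lower bound from its reverse --- equivalently, Barthe's reverse geometric Brascamp--Lieb inequality specialised to coordinate projections --- in both cases by a single linear change of variables that converts the (non-orthogonal) projections onto $\widetilde{H}_j^\perp$ into the coordinate projections onto $E_j:=\mathrm{span}\{e_k\,:\,k\in S_j\}$, where $\{e_i\}_{i=1}^n$ is the standard orthonormal basis. Write $E_j^\perp=\mathrm{span}\{e_k\,:\,k\notin S_j\}$ and let $T\colon\R^n\to\R^n$ be the linear isomorphism with $Te_i=w_i$, so $|\det T|=|\wedge_{i=1}^n w_i|$. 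Two linear--algebraic facts are used repeatedly: (i) $T(E_j^\perp)=\widetilde{H}_j$; and hence, taking orthogonal complements via $(AV)^\perp=A^{-T}(V^\perp)$, (ii) the map $\Psi:=(T^T)^{-1}$ satisfies $\Psi(E_j)=(T(E_j^\perp))^\perp=\widetilde{H}_j^\perp$.

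\emph{Upper bound.} Since $\ker(P_{\widetilde{H}_j^\perp}\circ T)=T^{-1}(\widetilde{H}_j)=E_j^\perp$, the surjection $P_{\widetilde{H}_j^\perp}\circ T$ factors uniquely as $\phi_j\circ P_{E_j}$ for a linear isomorphism $\phi_j\colon E_j\to\widetilde{H}_j^\perp$ with $\phi_j e_k=P_{\widetilde{H}_j^\perp}w_k$ for $k\in S_j$. Substituting $x=Ty$ and applying Theorem~\ref{thm:invariant finner's ineq} to the functions $f_j\circ\phi_j\colon E_j\to[0,\infty)$ gives
\begin{align*}
\int_{\R^n}\prod_{j=1}^m f_j^{p_j}(P_{\widetilde{H}_j^\perp}x)\,dx
&=|\det T|\int_{\R^n}\prod_{j=1}^m(f_j\circ\phi_j)^{p_j}(P_{E_j}y)\,dy\\
&\leqslant|\det T|\prod_{j=1}^m\left(\int_{E_j}f_j\circ\phi_j\right)^{p_j}\\
&=|\det T|\prod_{j=1}^m\frac{1}{|\det\phi_j|^{p_j}}\left(\int_{\widetilde{H}_j^\perp}f_j\right)^{p_j}.
\end{align*}
Here $|\det\phi_j|=|\wedge_{k\in S_j}P_{\widetilde{H}_j^\perp}w_k|$, and the ``base $\times$ height'' decomposition of the parallelepiped on $w_1,\dots,w_n$ along the orthogonal splitting $\R^n=\widetilde{H}_j\oplus\widetilde{H}_j^\perp$ yields $|\wedge_{i=1}^n w_i|=|\wedge_{k\notin S_j}w_k|\cdot|\det\phi_j|$. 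Substituting this and using $\sum_j p_j=p$ identifies the constant $|\det T|\prod_j|\det\phi_j|^{-p_j}$ with exactly $BL_1((w_i)_{i=1}^n,[n],(S_j)_{j=1}^m,(p_j)_{j=1}^m)$.

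\emph{Lower bound.} By (ii), $\psi_j:=\Psi|_{E_j}\colon E_j\to\widetilde{H}_j^\perp$ is a linear isomorphism with $\psi_j e_k=\Psi e_k$ for $k\in S_j$, and $\{\Psi e_i\}_{i=1}^n$ is the basis dual to $\{w_i\}_{i=1}^n$ since $\langle\Psi e_k,w_\ell\rangle=\langle e_k,e_\ell\rangle=\delta_{k\ell}$. Given $f$ and the $f_j$ as in the statement, set $g=f\circ\Psi$ and $g_j=f_j\circ\psi_j$. If $y=\sum_j p_j z_j$ with $z_j\in E_j$, then $\Psi y=\sum_j p_j\psi_j z_j$ with $\psi_j z_j\in\widetilde{H}_j^\perp$, so $g(y)=f(\Psi y)\geqslant\prod_j f_j^{p_j}(\psi_j z_j)=\prod_j g_j^{p_j}(z_j)$; since $\sum_j p_j P_{E_j}=I$ (because $(S_1,\dots,S_m)$ covers $[n]$), the reverse geometric Brascamp--Lieb inequality applies and gives $\int_{\R^n}g\geqslant\prod_j(\int_{E_j}g_j)^{p_j}$. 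Undoing the substitutions, $\int_{\R^n}g=|\det T|\int_{\R^n}f$ and $\int_{E_j}g_j=|\det\psi_j|^{-1}\int_{\widetilde{H}_j^\perp}f_j$, so the claim reduces to $|\det\psi_j|=|\det\phi_j|^{-1}=|\wedge_{k\notin S_j}w_k|/|\wedge_{i=1}^n w_i|$. This holds because $\Psi e_k\perp\widetilde{H}_j$ for $k\in S_j$ (as $\langle\Psi e_k,w_\ell\rangle=0$ for $\ell\notin S_j$), whence $\langle\Psi e_k,P_{\widetilde{H}_j^\perp}w_{k'}\rangle=\langle\Psi e_k,w_{k'}\rangle=\delta_{kk'}$ for $k,k'\in S_j$; thus $\{\Psi e_k\}_{k\in S_j}$ is the basis of $\widetilde{H}_j^\perp$ dual to $\{P_{\widetilde{H}_j^\perp}w_k\}_{k\in S_j}$, and dual bases have reciprocal parallelepiped volumes. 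Substituting $|\det\psi_j|^{-1}=|\det\phi_j|$ produces the constant $|\det T|^{-1}\prod_j|\det\phi_j|^{p_j}=BL_1^{-1}$, as claimed.

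The substitutions and the Jacobian/volume bookkeeping are routine once (i) and (ii) are in hand; the single genuinely geometric input is the ``base $\times$ height'' identity for $|\wedge_{i=1}^n w_i|$, which is what produces the constant $BL_1$, so I expect the only real care needed to be in tracking the Jacobian factors and quoting the correct form of the reverse Finner inequality. Alternatively, the lower bound can be obtained directly from \eqref{eq:affineFinner}: the change of variables above shows that $BL_1$ is the \emph{sharp} constant in \eqref{eq:affineFinner} (it transfers to the sharp constant $1$ in Finner's inequality), and the sharp constants in a Brascamp--Lieb inequality and in its dual are reciprocal.
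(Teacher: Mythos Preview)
Your argument is correct. It is, however, a genuinely different route from the paper's main proof.

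The paper proves the forward inequality by the \emph{factorisation method}: for each test function $M\in L^{p'}$ with $\|M\|_{p'}=1$ one writes $M^{p'}$ as a telescoping product $\gamma_n\cdots\gamma_1$ along the directions $w_1,\dots,w_n$, defines $M_j=\prod_i\gamma_i^{\chi_{S_j^c}(i)}$, and uses H\"older together with the fact that each $\int_\R\gamma_i(\,\cdot+t w_i)\,dt$ is constant. This is an intrinsic argument that never passes through the orthonormal Finner inequality. By contrast, you reduce directly to Finner by the single substitution $x=Ty$, observing that $P_{\widetilde{H}_j^\perp}\circ T$ factors through the coordinate projection $P_{E_j}$ because $T(E_j^\perp)=\widetilde{H}_j$; the constant then drops out of the base--height identity $|\wedge_{i=1}^n w_i|=|\wedge_{k\notin S_j}w_k|\cdot|\wedge_{k\in S_j}P_{\widetilde{H}_j^\perp}w_k|$.

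The paper's Appendix also gives a change-of-variables reduction to Finner, but for the companion statement with $H_j=\mathrm{span}\{w_k:k\in S_j\}$ (Theorem~\ref{thm:BrascampLiebSpannedInSjIntro}), using the square root of $A=\sum_i w_i\otimes w_i$; the authors explicitly remark that that particular reduction does not apply directly to the present theorem. Your choice of $T$ (rather than $A^{1/2}$) sidesteps this: the kernel identity $T^{-1}(\widetilde{H}_j)=E_j^\perp$ is exactly what is needed to factor $P_{\widetilde{H}_j^\perp}\circ T$ through $P_{E_j}$. Your dual-basis computation $|\det\psi_j|=|\det\phi_j|^{-1}$ is essentially the content of the paper's Lemma~\ref{lem:EqualConstants}.

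For the reverse inequality, both you and the paper ultimately invoke Barthe's theorem that the reverse Brascamp--Lieb constant is the reciprocal of the forward one; your direct verification via $\Psi=(T^T)^{-1}$ and the reverse Finner inequality with constant $1$ is a clean way to make this explicit.
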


\begin{rmk}
Notice that if $(w_i)_{i=1}^n=(e_i)_{i=1}^n$ is the canonical basis then $\widetilde{H}_j^\perp=H_j=\textrm{span}\{e_k\,:\,k\in S_j\}$ and this inequality becomes \eqref{eq:ClassicalFinner}. Additionally, once we have proved the first inequality, then we can use the fact that any Brascamp-Lieb inequality gives a reverse Brascamp-Lieb inequality with inverse constant (see \cite{Bar}).
\end{rmk}

The following theorem, which is an inequality when projecting on different subspaces from the ones considered in the affine Finner inequality, is an equivalent version of Theorem \ref{thm:affineFinnerIntro}.

\begin{thm}\label{thm:BrascampLiebSpannedInSjIntro}
Let $\{w_i\}_{i=1}^n$ be a basis of $\R^n$, let $m\geqslant 1$ and let $(S_1,\dots, S_m)$ be a uniform cover of $[n]$ with weights $(p_1,\dots, p_m)$. Let $H_j={\rm span}\{w_k\,:\,k\in S_j\}$. Then, for every integrable $f_j:H_j\to[0,\infty)$, $1\leqslant j\leqslant m$, we have
$$
\int_{\R^n}\prod_{j=1}^m f_j^{p_j}(P_{H_j}x)dx\leqslant BL_2((w_i)_{i=1}^n,[n],(S_j)_{j=1}^m,(p_j)_{j=1}^m)\prod_{j=1}^m\left(\int_{H_j}f_j(x)dx\right)^{p_j},
$$
where, for any basis $\{w_i\}_{i=1}^n$ of $\R^n$, $S\subseteq[n]$ and any uniform cover $(S_1,\dots, S_m)$ of $S$ with weights $(p_1,\dots,p_m)$, we denote
\begin{equation}
BL_2((w_i)_{i=1}^n, S,(S_j)_{j=1}^m,(p_j)_{j=1}^m):=\frac{\prod_{j=1}^m|\wedge_{k\in S_j}w_k|^{p_j}}{|\wedge_{i\in S} w_i|}.
\end{equation}
Moreover, for every integrable $f\colon \R^n\to[0,\infty)$ and $f_j:H_j\to[0,\infty)$ such that $f(x)\geqslant\prod_{j=1}^mf_j^{p_j}(x_j)$ whenever  $x=\sum_{j=1}^m p_jx_j$ for some $x_j\in H_j$, we have
$$
\int_{\R^n}f(x)dx\geqslant\frac{1}{ BL_2([n],(S_j)_{j=1}^m,(p_j)_{j=1}^m)}\prod_{j=1}^m\left(\int_{H_j}f_j(x)dx\right)^{p_j}.
$$
\end{thm}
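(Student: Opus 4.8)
The plan is to derive Theorem~\ref{thm:BrascampLiebSpannedInSjIntro} directly from Theorem~\ref{thm:affineFinnerIntro} by passing to the dual basis. Given the basis $\{w_i\}_{i=1}^n$ of $\R^n$, let $\{w_i^*\}_{i=1}^n$ be its dual basis with respect to the standard inner product, determined by $\langle w_i^*,w_j\rangle=\delta_{ij}$; this is again a basis of $\R^n$, and its own dual basis is $\{w_i\}_{i=1}^n$. I would apply Theorem~\ref{thm:affineFinnerIntro} \emph{verbatim but with $\{w_i^*\}_{i=1}^n$ in the role of $\{w_i\}_{i=1}^n$}, keeping the same uniform cover $(S_1,\dots,S_m)$ of $[n]$ and the same weights $(p_1,\dots,p_m)$. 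Under this substitution the subspace called $\widetilde H_j$ becomes $\widetilde H_j^*:=\mathrm{span}\{w_k^*\,:\,k\notin S_j\}$, and \eqref{eq:affineFinner} concerns the orthogonal projections onto the subspaces $(\widetilde H_j^*)^{\perp}$.

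Two elementary identifications convert this application into the assertion of Theorem~\ref{thm:BrascampLiebSpannedInSjIntro}. First, biorthogonality gives $(\widetilde H_j^*)^{\perp}=H_j$: for $l\in S_j$ and $k\notin S_j$ we have $\langle w_l,w_k^*\rangle=\delta_{lk}=0$, so $\mathrm{span}\{w_l\,:\,l\in S_j\}$ is contained in the orthogonal complement of $\mathrm{span}\{w_k^*\,:\,k\notin S_j\}$, and a dimension count ($|S_j|=n-|[n]\setminus S_j|$) forces equality; hence the integrals on both sides of \eqref{eq:affineFinner}, written for $\{w_i^*\}_{i=1}^n$, are literally those appearing in Theorem~\ref{thm:BrascampLiebSpannedInSjIntro}, with the $f_j$ living on $(\widetilde H_j^*)^{\perp}=H_j$. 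Second, the constants coincide: if $W$ is the matrix with columns $w_1,\dots,w_n$ then $(W^{-1})^{T}$ has columns $w_1^*,\dots,w_n^*$, and a classical complementary-minors identity (Jacobi) gives
\begin{equation*}
\Big|\bigwedge_{k\in A}w_k^*\Big|=\frac{\big|\bigwedge_{k\in[n]\setminus A}w_k\big|}{\big|\bigwedge_{i\in[n]}w_i\big|}\qquad\text{for every }A\subseteq[n].
\end{equation*}
Substituting $A=[n]\setminus S_j$ and $A=[n]$ into the definition of $BL_1$, and cancelling a power of $|\wedge_{i\in[n]}w_i|$ with the help of $\sum_{j}p_j=p$, one obtains
$$
BL_1\big((w_i^*)_{i=1}^n,[n],(S_j)_{j=1}^m,(p_j)_{j=1}^m\big)=\frac{\prod_{j=1}^m|\wedge_{k\in S_j}w_k|^{p_j}}{|\wedge_{i\in[n]}w_i|}=BL_2\big((w_i)_{i=1}^n,[n],(S_j)_{j=1}^m,(p_j)_{j=1}^m\big).
$$

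With these identifications, the first inequality of Theorem~\ref{thm:affineFinnerIntro} for the basis $\{w_i^*\}_{i=1}^n$ is exactly the first inequality of Theorem~\ref{thm:BrascampLiebSpannedInSjIntro}, and the reverse inequality of Theorem~\ref{thm:affineFinnerIntro} for $\{w_i^*\}_{i=1}^n$ — observing that its hypothesis, namely $f(x)\geqslant\prod_j f_j^{p_j}(x_j)$ whenever $x=\sum_j p_jx_j$ with $x_j\in(\widetilde H_j^*)^{\perp}$, is the hypothesis of Theorem~\ref{thm:BrascampLiebSpannedInSjIntro} once we rewrite $(\widetilde H_j^*)^{\perp}=H_j$ — is its second inequality. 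The only genuine computation here is the complementary-minors identity displayed above; everything of real analytic weight, that is, the Brascamp--Lieb constant estimate, is already contained in Theorem~\ref{thm:affineFinnerIntro}, so the main point to get right for this statement is simply the bookkeeping of which subsets, weights and parallelepiped volumes get exchanged under dualisation. It is worth stressing in the write-up why one cannot shortcut this by a bare change of variables $x=Ty$ with $Te_i=w_i$ and an appeal to Finner's inequality~\eqref{eq:ClassicalFinner}: in the $w$-coordinates the orthogonal projection $P_{H_j}$ is \emph{not} the coordinate projection onto $\mathrm{span}\{e_k\,:\,k\in S_j\}$ — that naive substitution yields instead the oblique-projection variant, with the reciprocal constant $BL_2^{-1}$ — and it is precisely the dual basis that repairs this mismatch.
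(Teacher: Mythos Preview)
Your proposal is correct and is essentially the paper's own argument: the dual basis $\{w_i^*\}$ you introduce is precisely the basis $\{v_i\}$ of rows of $M^{-1}$ that the paper uses (see the Remark after the statement and Lemma~\ref{lem:EqualConstants}), and your complementary-minors identity is exactly equation~\eqref{eq:EqualConstantsEachj}. The paper additionally records a second, independent proof in the Appendix via the transformation $A^{-1/2}$ with $A=\sum_i w_i\otimes w_i$ and a direct appeal to Finner's inequality, but your route matches the primary derivation.
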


\begin{rmk}
Notice that if we call $M$ the matrix whose columns are the vectors $(w_i)_{i=1}^n$, and we denote by $v_i$ the rows of the matrix $M^{-1}$,
then we have that for every $1\leqslant j\leqslant m$
$$
H_j=\textrm{span}\{w_k\,:\,k\in S_j\}=\textrm{span}\{v_k\,:\,k\not\in S_j\}^\perp.
$$
Therefore, a Brascamp-Lieb inequality projecting on the subspaces $H_j$ can be obtained from Theorem \ref{thm:affineFinnerIntro} with constant $BL_1((v_i)_{i=1}^n,[n],(S_j)_{j=1}^m,(p_j)_{j=1}^m)$. Since it turns out (see Lemma \ref{lem:EqualConstants} below) that
$$
BL_1((v_i)_{i=1}^n,[n],(S_j)_{j=1}^m,(p_j)_{j=1}^m)=BL_2((w_i)_{i=1}^n,[n],(S_j)_{j=1}^m,(p_j)_{j=1}^m),
$$
we have that Theorem \ref{thm:BrascampLiebSpannedInSjIntro} and Theorem \ref{thm:affineFinnerIntro} are equivalent. From now on we will omit the arguments in $BL_1$ and $BL_2$ whenever they are clear from the context.
\end{rmk}
In order to prove Theorem \ref{thm:affineFinnerIntro} we will use the so-called factorisation method, which was introduced and developed in \cite{CHV}. The idea of the factorisation method is that to prove our inequality we first test on a function, call it $M$, then we \textit{factorise} $M$ appropriately as a product of functions to use H\"older's inequality. In \cite{CHV} it was proved (under mild hypotheses) that a positive multilinear inequality holds if and only if such a factorisation exists.

In the Appendix we provide a proof for Theorem \ref{thm:BrascampLiebSpannedInSjIntro}, which is different from the one described above. It seems that each proof is tailored for the construction of appropriate families of subspaces.

It is interesting that we can find an explicit formula for the constant in Theorem \ref{thm:BrascampLiebSpannedInSjIntro} and Theorem \ref{thm:affineFinnerIntro}, since in general is difficult to compute the Brascamp-Lieb constant. 
These inequalities will provide different affine invariant versions of Bollob\'as-Thomason inequality and its dual. In particular we will prove the following different extensions of Bollob\'as-Thomason inequality:
\begin{thm}\label{thm:AffineInvariantBollobasThomasonIntro}
Let $\{w_i\}_{i=1}^n$ be a basis of $\R^n$ and let $(S_1,\dots, S_m)$ be a uniform cover of $[n]$ with weights $(p_1,\dots p_m)$. If $H_j={\rm span}\{w_k\,:\,k\in S_j\}$, $\widetilde{H}_j={\rm span}\{w_k\,:\,k\not\in S_j\}$ and $p=\sum_{j=1}^mp_j$, then, for every compact $K\subseteq\R^n$ we have the following four inequalities:
\begin{align}
\label{eq1AffineInvariantBT}|K|&\leqslant BL_1\prod_{j=1}^m|P_{\widetilde{H}_j^\perp} K|^{p_j},\\
\label{eq2AffineInvariantBT}|K|^{p-1}&\leqslant  BL_2\prod_{j=1}^m|P_{H_j^\perp} K|^{p_j},\\
\label{eq3AffineInvariantBT}|K|&\leqslant  BL_2\prod_{j=1}^m|P_{H_j} K|^{p_j},\\
\label{eq4AffineInvariantBT}|K|^{p-1}&\leqslant  BL_1\prod_{j=1}^m|P_{\widetilde{H}_j} K|^{p_j}.
\end{align}
\end{thm}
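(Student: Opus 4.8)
The plan is to derive all four inequalities from the already-established functional statements, Theorem~\ref{thm:affineFinnerIntro} and Theorem~\ref{thm:BrascampLiebSpannedInSjIntro}, by testing them on indicator functions, and then to obtain the two inequalities involving $|K|^{p-1}$ from the two involving $|K|$ by passing to the complementary uniform cover.

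First I would prove \eqref{eq3AffineInvariantBT} and \eqref{eq1AffineInvariantBT} directly. For \eqref{eq3AffineInvariantBT}, apply Theorem~\ref{thm:BrascampLiebSpannedInSjIntro} to the functions $f_j=\chi_{P_{H_j}K}$, which are integrable because $K$, and hence each projection $P_{H_j}K$, is compact. Since $P_{H_j}x\in P_{H_j}K$ for every $x\in K$, one has $\prod_{j=1}^m f_j^{p_j}(P_{H_j}x)\geqslant\chi_K(x)$ everywhere on $\R^n$; integrating and invoking the theorem gives $|K|\leqslant BL_2\prod_{j=1}^m|P_{H_j}K|^{p_j}$. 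Inequality \eqref{eq1AffineInvariantBT} follows in exactly the same way from Theorem~\ref{thm:affineFinnerIntro} applied to $f_j=\chi_{P_{\widetilde H_j^\perp}K}$.

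Next I would treat \eqref{eq2AffineInvariantBT} and \eqref{eq4AffineInvariantBT} by duality of covers, assuming first that $p>1$. As recalled in the introduction, $(S_1^c,\dots,S_m^c)$ is a uniform cover of $[n]$ with weights $(p_1',\dots,p_m')$, where $p_j'=\tfrac{p_j}{p-1}$, and its total weight is $p'=\tfrac{p}{p-1}$, so that $p'-1=\tfrac{1}{p-1}$. Applying \eqref{eq1AffineInvariantBT} to this complementary cover and observing that $\mathrm{span}\{w_k:k\notin S_j^c\}=H_j$, so that the relevant orthogonal complement is $H_j^\perp$, gives
\[
|K|\leqslant BL_1\bigl((w_i)_{i=1}^n,[n],(S_j^c)_{j=1}^m,(p_j')_{j=1}^m\bigr)\prod_{j=1}^m|P_{H_j^\perp}K|^{p_j'}.
\]
A direct substitution — using $[n]\setminus S_j^c=S_j$, $p_j'=\tfrac{p_j}{p-1}$ and $p'-1=\tfrac{1}{p-1}$ in the definition of $BL_1$ — shows that $BL_1\bigl((w_i)_{i=1}^n,[n],(S_j^c)_{j=1}^m,(p_j')_{j=1}^m\bigr)=BL_2\bigl((w_i)_{i=1}^n,[n],(S_j)_{j=1}^m,(p_j)_{j=1}^m\bigr)^{1/(p-1)}$, and raising the previous display to the power $p-1$ yields \eqref{eq2AffineInvariantBT}. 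Symmetrically, applying \eqref{eq3AffineInvariantBT} to the complementary cover — where now $\mathrm{span}\{w_k:k\in S_j^c\}=\widetilde H_j$ — and checking in the same manner that $BL_2\bigl((w_i)_{i=1}^n,[n],(S_j^c)_{j=1}^m,(p_j')_{j=1}^m\bigr)=BL_1\bigl((w_i)_{i=1}^n,[n],(S_j)_{j=1}^m,(p_j)_{j=1}^m\bigr)^{1/(p-1)}$ and raising to the power $p-1$, produces \eqref{eq4AffineInvariantBT}. The remaining case $p=1$ forces every $S_j$ to equal $[n]$, whence $H_j=\R^n$, $H_j^\perp=\widetilde H_j=\{0\}$, $BL_1=BL_2=1$, and \eqref{eq2AffineInvariantBT}, \eqref{eq4AffineInvariantBT} both read $1\leqslant 1$.

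The only step that requires any care is the verification of the two constant identities used above. Each is purely a matter of substitution: the numerator of $BL_1$ for the complementary cover is $\prod_j|\wedge_{k\in S_j}w_k|^{p_j/(p-1)}$, which is the numerator of $BL_2$ for the original cover raised to the power $1/(p-1)$, while the denominators $|\wedge_{i\in[n]}w_i|^{p'-1}$ and $|\wedge_{i\in[n]}w_i|^{1/(p-1)}$ coincide since $p'-1=1/(p-1)$; the second identity is obtained by exchanging the roles of $BL_1$ and $BL_2$. No analytic obstacle remains once the functional Theorems~\ref{thm:affineFinnerIntro} and \ref{thm:BrascampLiebSpannedInSjIntro} are in hand: the whole argument is a pointwise comparison of indicators together with this elementary bookkeeping of the wedge products.
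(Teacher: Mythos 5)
Your proposal is correct and follows essentially the same route as the paper: the paper also obtains all four inequalities by testing the affine Finner/Brascamp--Lieb theorems on indicator functions of the projections (noting $K\subseteq L_1=\{x: P_{\widetilde H_j^\perp}x\in P_{\widetilde H_j^\perp}K \text{ for all } j\}$), with the only cosmetic difference that the paper has already packaged the complementary-cover versions as Theorems \ref{thm:AffineFinner2} and \ref{thm:BrascampLiebSpannedInSjc}, whereas you rederive them by the substitution $S_j\mapsto S_j^c$, $p_j\mapsto p_j/(p-1)$ and verify the resulting identities between $BL_1$ and $BL_2$ by hand. Your bookkeeping of the constants and the degenerate case $p=1$ is accurate.
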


Using \eqref{eq1AffineInvariantBT} in Theorem \ref{thm:AffineInvariantBollobasThomasonIntro} we obtain a generalisation of Gagliardo-Nirenberg inequality. Moreover, for log-concave functions we can achieve a better constant which also leads to a sharp generalisation of an inequality obtained by Bobkov and Nazarov in \cite{BN}, implying the boundedness of the isotropic constant of log-concave unconditional measures.

In general, we will make an extensive study of the different affine invariant extensions of Bollob\'as-Thomason inequality that we obtain. We will also provide functional versions of the geometric inequalities obtained in the context of log-concave functions, which will recover their geometric versions when applied to functions of the form $e^{-\Vert\cdot\Vert_K}$, where $\Vert\cdot\Vert_K$ denotes the Minkowski functional associated to a given convex body $K$ containing the origin in its interior. These will be different from the ones we get when considering the characteristic function $\chi_K$, as is the case with Brascamp-Lieb inequality. These functional inequalities cannot be directly obtained from the stated Brascamp-Lieb inequality as in this setting one would have to consider an extra dimension and the subspaces on which one would have to project would not form a covering of $[n+1]$. Rather than that, they will be restricted Loomis-Whitney type inequalities in $\R^{n+1}$. We will also prove affine-invariant local versions of both geometric and functional inequalities. In particular, we will prove the following different extensions of inequality \eqref{eq:LocalLoomisWhitney2NonOrthogonalVectors}.

\begin{thm}\label{thm:AffineGeometricLocalLoomisWhitneyIntro}
Let $\{w_i\}_{i=1}^n$ be a basis of $\R^n$ and let $S\subseteq[n]$ with cardinality $|S|=d$. Let $(S_1,\dots, S_m)$ form a uniform cover of $S$ with weights $(p_1,\dots, p_m)$. If $H={\rm span}\{w_k\,:\,k\in S\}$, $H_j={\rm span}\{w_k\,:\,k\in S_j\}$, $\widetilde{H}_j={\rm span}\{w_k\,:\,k\in S\setminus S_j\}$, $d_j={\rm dim}H_j=|S_j|$, and $p=\sum_{j=1}^mp_j$. Then, for every convex body $K\subseteq\R^n$ we have the following four inequalities:
\begin{align}
\label{eq:1.11}|P_{H^\perp}K|^{p-1}|K|&\leqslant BL_1\cdot\frac{\prod_{j=1}^m{{n-d+d_j}\choose{d_j}}^{p_j}}{{n\choose d}}\prod_{j=1}^m|P_{\widetilde{H}_j^\perp}K|^{p_j},\\
\label{eq:1.12}|P_{H^\perp}K||K|^{p-1}&\leqslant BL_2\cdot\frac{\prod_{j=1}^m{{n-d_j}\choose{n-d}}^{p_j}}{{n\choose d}^{p-1}}\prod_{j=1}^m|P_{H_j^\perp}K|^{p_j},\\
\label{eq:1.13}|P_{H^\perp}K|^{p-1}|K|&\leqslant BL_2\cdot\frac{\prod_{j=1}^m{{n-d+d_j}\choose{d_j}}^{p_j}}{{n\choose d}}\prod_{j=1}^m|P_{H_j\oplus H^\perp}K|^{p_j},\\
\label{eq:1.14}|P_{H^\perp}K||K|^{p-1}&\leqslant BL_1\cdot\frac{\prod_{j=1}^m{{n-d_j}\choose{n-d}}^{p_j}}{{n\choose d}^{p-1}}\prod_{j=1}^m|P_{\widetilde{H}_j\oplus H^\perp}K|^{p_j}.
\end{align}
\end{thm}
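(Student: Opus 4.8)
The plan is to reduce each of the four inequalities to the unrestricted affine Bollob\'as--Thomason inequalities of Theorem~\ref{thm:AffineInvariantBollobasThomasonIntro}, applied \emph{inside the subspace $H$}, by slicing $K$ by translates of $H^\perp$ and then absorbing the integral over the slices by a Berwald-type estimate. I carry this out first for \eqref{eq:1.11}. Set $V:=P_{H^\perp}K$, a convex body of dimension $n-d$ in $H^\perp$, and for $z\in V$ put $K_z:=\bigl(K\cap(z+H)\bigr)-z\subseteq H$, which for $z$ in the interior of $V$ is a $d$-dimensional convex body. Since $\widetilde{H}_j\subseteq H$ one has the orthogonal decomposition $\widetilde{H}_j^\perp=H^\perp\oplus E_j$, where $E_j:=H\cap\widetilde{H}_j^\perp$ has dimension $d_j$; the fibre of $P_{\widetilde{H}_j^\perp}K$ over $z\in H^\perp$ is $P_{E_j}K_z$, so Fubini's theorem gives
$$
|K|=\int_V|K_z|\,dz,\qquad |P_{\widetilde{H}_j^\perp}K|=\int_V|P_{E_j}K_z|\,dz=:\int_V f_j(z)\,dz .
$$
Now apply Theorem~\ref{thm:AffineInvariantBollobasThomasonIntro} inside the $d$-dimensional space $H$, equipped with the basis $\{w_k\}_{k\in S}$ and the uniform cover $(S_1,\dots,S_m)$ of $S$ (for which the orthogonal complement of $\widetilde{H}_j$ \emph{inside $H$} is exactly $E_j$): inequality \eqref{eq1AffineInvariantBT} yields $|K_z|\le BL_1\prod_{j=1}^m f_j(z)^{p_j}$ for each $z\in V$, with $BL_1=BL_1((w_i)_{i=1}^n,S,(S_j),(p_j))$. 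After integrating over $V$ it remains to bound $\int_V\prod_j f_j^{p_j}$ by $|V|^{1-p}\prod_j\bigl(\int_V f_j\bigr)^{p_j}$ with the asserted constant.

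This last step is where the binomials come from. By Brunn--Minkowski $K_{\lambda z_1+(1-\lambda)z_2}\supseteq\lambda K_{z_1}+(1-\lambda)K_{z_2}$, so each $g_j:=f_j^{1/d_j}$ is concave on $V$. Since $(S_1,\dots,S_m)$ is a uniform cover of $S$ we have $\sum_j p_jd_j=\sum_j p_j|S_j|=d$, whence the exponents $r_j:=d/(p_jd_j)$ satisfy $\sum_j 1/r_j=1$ and $p_jd_jr_j=d$; H\"older's inequality thus gives $\int_V\prod_j g_j^{p_jd_j}\le\prod_j\bigl(\int_V g_j^d\bigr)^{1/r_j}$. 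Applying the classical Berwald inequality to the concave function $g_j$ on the $(n-d)$-dimensional body $V$ with the exponents $d_j\le d$ bounds $\int_V g_j^d$ by $\binom{n-d+d_j}{d_j}^{d/d_j}\binom{n}{d}^{-1}|V|^{1-d/d_j}\bigl(\int_V g_j^{d_j}\bigr)^{d/d_j}$ (using $\binom{(n-d)+d}{n-d}=\binom{n}{d}$ and $g_j^{d_j}=f_j$). Multiplying over $j$ and collecting powers — the identities $\sum_j p_jd_j/d=1$ and $\sum_j p_j=p$ are exactly what make the $m$ factors $\binom{n}{d}^{1/r_j}$ recombine into one $\binom{n}{d}$ and the power of $|V|$ equal $1-p$ — yields
$$
\int_V\prod_{j=1}^m f_j^{p_j}\le\frac{\prod_{j=1}^m\binom{n-d+d_j}{d_j}^{p_j}}{\binom{n}{d}}\,|V|^{1-p}\prod_{j=1}^m\Bigl(\int_V f_j\Bigr)^{p_j},
$$
and, since $|V|=|P_{H^\perp}K|$, this is \eqref{eq:1.11}. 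Inequality \eqref{eq:1.13} follows identically, using instead the identity $|P_{H_j\oplus H^\perp}K|=\int_V|P_{H_j}K_z|\,dz$ and inequality \eqref{eq3AffineInvariantBT} (the ``$BL_2$, projections onto $H_j$'' version of Theorem~\ref{thm:AffineInvariantBollobasThomasonIntro}) inside $H$.

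The remaining inequalities follow by uniform-cover duality. The family $(S\setminus S_1,\dots,S\setminus S_m)$ is a uniform cover of $S$ with weights $q_j:=p_j/(p-1)$ and total weight $q$ with $q-1=1/(p-1)$; for it the subspaces ``$H_j$'' become $\widetilde{H}_j$, the subspaces ``$\widetilde{H}_j^\perp$'' become $H_j^\perp$, and $|S\setminus S_j|=d-d_j$, so $\binom{n-d+(d-d_j)}{d-d_j}=\binom{n-d_j}{n-d}$. Applying \eqref{eq:1.13} (resp.\ \eqref{eq:1.11}) to this cover and raising the resulting inequality to the power $p-1$ gives \eqref{eq:1.14} (resp.\ \eqref{eq:1.12}), once one checks the elementary identities $BL_2((w_i),S,(S\setminus S_j),(q_j))^{p-1}=BL_1((w_i),S,(S_j),(p_j))$ and $BL_1((w_i),S,(S\setminus S_j),(q_j))^{p-1}=BL_2((w_i),S,(S_j),(p_j))$, in the spirit of Lemma~\ref{lem:EqualConstants}.

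The points to get right are the two Fubini identities for the slice-functions and the Brunn--Minkowski concavity of $f_j^{1/d_j}$; the genuine crux, however, is the choice of H\"older exponents $r_j=d/(p_jd_j)$, which relies on the uniform-cover identity $\sum_j p_jd_j=d$, requires $d_j\le d$ (so that Berwald is applied in the decreasing direction), and is precisely what forces the $\binom{n}{d}$-factors produced by the $m$ applications of Berwald to collapse into the single $\binom{n}{d}$ in the denominator of the constant.
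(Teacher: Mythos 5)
Your proof is correct, and every step checks out: the two Fubini identities for the slice functions, the application of \eqref{eq1AffineInvariantBT} and \eqref{eq3AffineInvariantBT} inside $H$ with the constant $BL_1((w_i)_{i\in S},S,(S_j),(p_j))$ (resp.\ $BL_2$), the Brunn--Minkowski concavity of $f_j^{1/d_j}$, the H\"older exponents $r_j=d/(p_jd_j)$ (admissible because $\sum_j p_jd_j=d$ forces $p_jd_j\leqslant d$), the bookkeeping of the binomials via $\binom{d+(n-d)}{n-d}=\binom{n}{d}$, and the duality identities between $BL_1$ and $BL_2$ under passage to the complementary cover. This is, however, not the route the paper writes out: the paper first proves a functional version for log-concave $f$ (Theorem \ref{thm:AffineLoomisWhitneyFunctionalLocal}), working with the lifted convex set $C=\{(x,t):f(x)\geqslant e^{-t}\Vert f\Vert_\infty\}\subseteq\R^{n+1}$, slicing it by translates of $\overline{H}^{\perp}$ against the density $e^{-t}$ and invoking the exponential-density Berwald inequality (Lemma \ref{lem:BerwaldExponentialDensity}); Theorem \ref{thm:AffineGeometricLocalLoomisWhitneyIntro} is then obtained by specialising to $f=e^{-\Vert\cdot\Vert_K}$, the factorials from $\Vert e^{-\Vert\cdot\Vert_{P_FK}}\Vert_1=(\dim F)!\,|P_FK|$ recombining into the stated binomial constants. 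Your argument is exactly the ``direct'' proof the authors mention in passing after Theorem \ref{thm:AffineLoomisWhitneyFunctionalLocal} (using the classical Berwald inequality, Theorem \ref{berwald01}, on the $(n-d)$-dimensional body $P_{H^\perp}K$ itself) but do not carry out. What the paper's detour buys is the stronger functional statement \eqref{eq:4.9}--\eqref{eq:4.12}, which does not follow from the geometric one; what your version buys is a shorter, self-contained derivation of the geometric theorem that avoids the extra dimension and the log-concavity machinery.
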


\begin{rmk}
Notice that if $S=\{1,2\}\subseteq[n]$, $S_1=\{1\}$, and $S_2=\{2\}$, then necessarily $p_1=p_2=1$. Taking any linearly independent $w_1, w_2\in S^{n-1}$, $H={\rm span}\{w_1,w_2\}$ and an orthonormal basis $\{w_3,\dots ,w_n\}$ of $H^\perp$, we have that $w_1\wedge w_2=\sqrt{1-\langle w_1,w_2\rangle^2}$ and if $v_1\in S^{n-1}$ spans $w_1^\perp\cap H$ and $v_2\in S^{n-1}$ spans $w_2^\perp\cap H$, then also $v_1\wedge v_2=w_1\wedge w_2$ and applying either \eqref{eq:1.11} or \eqref{eq:1.12} to $\{w_1,w_2,w_3,\dots,w_n\}$, or \eqref{eq:1.13} or \eqref{eq:1.14} to $\{v_1,v_2,w_3\dots,w_n\}$, any of the previous four inequalities recovers inequality \eqref{eq:LocalLoomisWhitney2NonOrthogonalVectors}.
\end{rmk}

Regarding dual Loomis-Whitney type inequalities, we will prove the following different extensions of inequality \eqref{eq:LiakopoulosDual}.

\begin{thm}\label{thm:DualBollobasThomasonIntro}
Let $w_1,\dots,w_n$ be $n$ vectors spanning $\R^n$, let $m\geqslant 1$ and let $(S_1,\dots, S_m)$ be a uniform cover of $[n]$ with weights $(p_1,\dots, p_m)$. Let $H_j={\rm span}\{w_k\,:\,k\in S_j\}$, $\widetilde{H}_j={\rm span}\{w_k\,:\,k\not\in S_j\}$, $d_j={\rm dim}H_j=|S_j|$, $\widetilde{d}_j={\rm dim}\widetilde{H}_j=n-d_j$, and $p=\sum_{j=1}^mp_j$. Then, for every convex body $K\subseteq\R^n$ containing the origin we have the following inequalities:
\begin{align}
\label{eq1:DualBollobasIntro}|K|&\geqslant\frac{1}{BL_1}\cdot\frac{\prod_{j=1}^m(d_j!)^{p_j}}{n!}\prod_{j=1}^m|K\cap\widetilde{H}_j^\perp|^{p_j},\\
\label{eq2:DualBollobasIntro}|K|^{p-1}&\geqslant\frac{1}{BL_2}\cdot\frac{(\widetilde{d}_j!)^{p_j}}{(n!)^{p-1}}\prod_{j=1}^m|K\cap H_j^\perp|^{p_j},
\\
\label{eq3:DualBollobasIntro}|K|&\geqslant\frac{1}{BL_2}\frac{\prod_{j=1}^m(d_j!)^{p_j}}{n!}\prod_{j=1}^m|K\cap H_j|^{p_j},\\
\label{eq4:DualBollobasIntro}|K|^{p-1}&\geqslant\frac{1}{BL_1}\frac{\prod_{j=1}^m(\widetilde{d}_j!)^{p_j}}{(n!)^{p-1}}\prod_{j=1}^m|K\cap\widetilde{H}_j|^{p_j}.
\end{align}
\end{thm}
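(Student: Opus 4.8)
The plan is to derive all four inequalities from the reverse (Barthe-type) statements contained in Theorem~\ref{thm:affineFinnerIntro} and Theorem~\ref{thm:BrascampLiebSpannedInSjIntro}, applied to the exponential profiles naturally attached to $K$ and to its sections. For a linear subspace $L\subseteq\R^n$, since $0\in K$ the Minkowski gauge $\|y\|_{K\cap L}=\inf\{\lambda>0:y\in\lambda(K\cap L)\}$ of the section $K\cap L$ inside $L$ is a well-defined $[0,\infty]$-valued function whose sublevel set at each height $t>0$ is $t(K\cap L)$; integrating this distribution gives the identity $\int_L e^{-\|y\|_{K\cap L}}\,dy=(\dim L)!\,|K\cap L|$, which holds with no restriction (both sides vanish when $K\cap L$ is not $(\dim L)$-dimensional). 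In particular $\int_{\R^n}e^{-\|x\|_K}\,dx=n!\,|K|$.

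First I would establish \eqref{eq1:DualBollobasIntro} and \eqref{eq3:DualBollobasIntro}. Take $L_j=\widetilde H_j^\perp$, which has dimension $d_j$, in the first case, and $L_j=H_j$ in the second, and set $f_j(y)=e^{-\|y\|_{K\cap L_j}}$ on $L_j$ together with $f(x)=e^{-\|x\|_K}$ on $\R^n$. The only geometric ingredient is the subadditivity estimate $\|\sum_{j=1}^m p_jy_j\|_K\leqslant\sum_{j=1}^m p_j\|y_j\|_{K\cap L_j}$ for $y_j\in L_j$: writing $t_j=\|y_j\|_{K\cap L_j}$ (finite, otherwise the right-hand side is $+\infty$) one has $y_j\in t_j(K\cap L_j)\subseteq t_jK$, so with $T=\sum_j p_jt_j$ (and $T>0$, the case $T=0$ forcing every $y_j=0$) the point $(\sum_j p_jy_j)/T$ is a convex combination of points of $K$ and hence lies in $K$. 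This says precisely that $f(x)\geqslant\prod_{j=1}^m f_j^{p_j}(y_j)$ whenever $x=\sum_{j=1}^m p_jy_j$ with $y_j\in L_j$, which is exactly the hypothesis required to apply the reverse part of Theorem~\ref{thm:affineFinnerIntro} (when $L_j=\widetilde H_j^\perp$) or of Theorem~\ref{thm:BrascampLiebSpannedInSjIntro} (when $L_j=H_j$). Combining that reverse inequality with the integral identities of the previous paragraph yields $n!\,|K|\geqslant BL_1^{-1}\prod_{j=1}^m(d_j!\,|K\cap\widetilde H_j^\perp|)^{p_j}$, i.e.\ \eqref{eq1:DualBollobasIntro}, and $n!\,|K|\geqslant BL_2^{-1}\prod_{j=1}^m(d_j!\,|K\cap H_j|)^{p_j}$, i.e.\ \eqref{eq3:DualBollobasIntro}.

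Next I would deduce \eqref{eq2:DualBollobasIntro} and \eqref{eq4:DualBollobasIntro} by passing to the complementary cover, as recalled in the introduction: $(S_1^c,\dots,S_m^c)$ is a uniform cover of $[n]$ with weights $p_j':=p_j/(p-1)$ and total weight $p':=p/(p-1)$, so that $p'-1=1/(p-1)$ and $(p-1)p_j'=p_j$ (we may assume $p>1$, the case $p=1$ being trivial since then every $S_j=[n]$). For the complementary cover the span of $\{w_k:k\in S_j^c\}$ is $\widetilde H_j$, of dimension $\widetilde d_j=n-d_j$, while the span of $\{w_k:k\notin S_j^c\}$ is $H_j$, whose orthogonal complement is $H_j^\perp$. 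Hence applying \eqref{eq1:DualBollobasIntro} to this cover gives $|K|\geqslant BL_1(\mathrm{comp})^{-1}(n!)^{-1}\prod_{j=1}^m(\widetilde d_j!)^{p_j'}\prod_{j=1}^m|K\cap H_j^\perp|^{p_j'}$, where $BL_1(\mathrm{comp}):=BL_1((w_i)_{i=1}^n,[n],(S_j^c)_{j=1}^m,(p_j')_{j=1}^m)$; raising both sides to the power $p-1$ and using $(p-1)p_j'=p_j$ converts this into the form of \eqref{eq2:DualBollobasIntro}, and the constant is correct because $BL_1(\mathrm{comp})=\prod_{j=1}^m|\wedge_{k\in S_j}w_k|^{p_j'}/|\wedge_{i\in[n]}w_i|^{1/(p-1)}$, whence $BL_1(\mathrm{comp})^{p-1}=\prod_{j=1}^m|\wedge_{k\in S_j}w_k|^{p_j}/|\wedge_{i\in[n]}w_i|=BL_2$. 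The same passage applied to \eqref{eq3:DualBollobasIntro} produces \eqref{eq4:DualBollobasIntro} via the companion identity $BL_2(\mathrm{comp})^{p-1}=\prod_{j=1}^m|\wedge_{k\notin S_j}w_k|^{p_j}/|\wedge_{i\in[n]}w_i|^{p-1}=BL_1$.

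I expect the steps requiring care to be, first, confirming that the gauge identity $\int_L e^{-\|y\|_{K\cap L}}\,dy=(\dim L)!\,|K\cap L|$ persists when $0\in\partial K$ or when $K\cap L$ is lower-dimensional (it does, both sides then being possibly zero, so no genericity reduction is needed and the inequalities hold trivially on the corresponding null factors); and, second, the bookkeeping of the complementary-cover step --- keeping track of which of $H_j$, $\widetilde H_j$, $H_j^\perp$, $\widetilde H_j^\perp$ plays which role, and verifying the two constant identities $BL_1(\mathrm{comp})^{p-1}=BL_2$ and $BL_2(\mathrm{comp})^{p-1}=BL_1$, where exponent slips are easiest to make. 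The substantive geometric fact, the subadditivity of the gauge under the weighted sum, is immediate from the convexity of $K$.
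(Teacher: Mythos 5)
Your proof is correct and follows essentially the same route as the paper: the paper first proves the general log-concave statement (Theorem \ref{thm:ReverseBL1Function}) via the reverse Finner inequalities and then specializes to $f(x)=e^{-\Vert x\Vert_K}$, which upon specialization is exactly your gauge-subadditivity argument, and the remaining two inequalities are likewise obtained by passing to the complementary cover. Your constant identities $BL_1(\mathrm{comp})^{p-1}=BL_2$ and $BL_2(\mathrm{comp})^{p-1}=BL_1$ check out (and your version of \eqref{eq2:DualBollobasIntro} correctly carries the product $\prod_{j=1}^m$ over the factorials, which is evidently a typo in the statement).
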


Some restricted versions will be proved too. For instance, we will prove the following
\begin{thm}\label{thm:RestrictedDualBollobasThomason1Intro}
Let $\{w_i\}_{i=1}^n$ be a basis of $\R^n$ and let $S\subseteq[n]$ with cardinality $|S|=d$. Let $(S_1,\dots, S_m)$ form a uniform cover of $S$ with weights $(p_1,\dots, p_m)$. Set $H={\rm span}\{w_k\,:\,k\in S\}$, $H_j={\rm span}\{w_k\,:\,k\in S_j\}$, $\widetilde{H}_j={\rm span}\{w_k\,:\,k\in S\setminus S_j\}$, $d_j={\rm dim}H_j=|S_j|$, $\widetilde{d}_j={\rm dim}\widetilde{H}_j=d-d_j$ and $p=\sum_{j=1}^mp_j$. Then, for every convex body $K\subseteq\R^n$, we have the following four inequalities:
\begin{align}
\max_{x\in H}|K\cap(x+H^\perp)|^{p-1}|K|&\geqslant\frac{1}{BL_1}\cdot\frac{\prod_{j=1}^m(d_j)^{p_jd_j}}{d^d}\prod_{j=1}^m|K\cap\widetilde{H}_j^\perp|^{p_j},\\
\max_{x\in H}|K\cap(x+H^\perp)||K|^{p-1}&\geqslant\frac{1}{BL_2}\cdot\frac{\prod_{j=1}^m(\widetilde{d}_j)^{p_j(\widetilde{d}_j)}}{d^{d(p-1)}}\prod_{j=1}^m|K\cap H_j^\perp|^{p_j},\\
\max_{x\in H}|K\cap(x+H^\perp)|^{p-1}|K|&\geqslant\frac{1}{BL_2}\cdot\frac{\prod_{j=1}^m(d_j)^{p_jd_j}}{d^d}\prod_{j=1}^m|K\cap (H_j\oplus H^\perp)|^{p_j},\\
\max_{x\in H}|K\cap(x+H^\perp)||K|^{p-1}&\geqslant\frac{1}{BL_1}\cdot\frac{\prod_{j=1}^m(\widetilde{d}_j)^{p_j\widetilde{d}_j}}{d^{d(p-1)}}\prod_{j=1}^m|K\cap(\widetilde{H}_j\oplus H^\perp)|^{p_j}.
\end{align}
\end{thm}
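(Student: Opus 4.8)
The plan is to slice $K$ along $H^{\perp}$ and thereby reduce all four inequalities to a single functional inequality. Put $f\colon H\to[0,\infty)$, $f(x):=|K\cap(x+H^{\perp})|$. Then $|K|=\int_{H}f$ by Fubini, $\max_{x\in H}|K\cap(x+H^{\perp})|=\|f\|_{\infty}$ by definition, and $f$ is log-concave with bounded support by Brunn's theorem (when $n>d$, even $f^{1/(n-d)}$ is concave on $P_{H}K$). Let $\{v_{k}\}_{k\in S}$ be the basis of $H$ dual to $\{w_{k}\}_{k\in S}$. Since $\widetilde{H}_{j},H_{j}\subseteq H$, each of $\widetilde{H}_{j}^{\perp}$, $H_{j}^{\perp}$, $H_{j}\oplus H^{\perp}$, $\widetilde{H}_{j}\oplus H^{\perp}$ contains $H^{\perp}$ and decomposes orthogonally as $G\oplus H^{\perp}$ with $G\subseteq H$ equal respectively to $\mathrm{span}\{v_{k}:k\in S_{j}\}$, $\mathrm{span}\{v_{k}:k\in S\setminus S_{j}\}$, $H_{j}$, $\widetilde{H}_{j}$; and for any such $G$, Fubini gives $|K\cap(G\oplus H^{\perp})|=\int_{G}f$. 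Thus every section appearing on the right-hand sides of the four inequalities equals $\int_{G}f$ for the corresponding $G\subseteq H$.

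Identifying $S$ with $[d]$, it then suffices to prove the following inequality $(\star)$: for any basis $\{u_{k}\}_{k\in[d]}$ of $\R^{d}$, any uniform cover $(T_{1},\dots,T_{m})$ of $[d]$ with weights $(q_{j})_{j}$, $q=\sum_{j}q_{j}$, $G_{j}=\mathrm{span}\{u_{k}:k\in T_{j}\}$, $c_{j}=|T_{j}|$, and any log-concave $f\colon\R^{d}\to[0,\infty)$ with bounded support,
$$
\|f\|_{\infty}^{q-1}\int_{\R^{d}}f\ \geq\ \frac{1}{BL_{2}\big((u_{k})_{k\in[d]},[d],(T_{j})_{j},(q_{j})_{j}\big)}\cdot\frac{\prod_{j}c_{j}^{\,q_{j}c_{j}}}{d^{d}}\ \prod_{j}\Big(\int_{G_{j}}f\Big)^{q_{j}}.
$$
Indeed, the first and third inequalities of the theorem are $(\star)$ for $(u_{k})=(v_{k})$, resp.\ $(u_{k})=(w_{k})$, with $(T_{j})=(S_{j})$, $q_{j}=p_{j}$, $c_{j}=d_{j}$ — using $BL_{2}((v_{k})_{k\in S},\dots)=BL_{1}((w_{k})_{k\in S},\dots)=BL_{1}$ (Lemma \ref{lem:EqualConstants} inside $H$) in the first case, and $BL_{2}((w_{k})_{k\in S},\dots)=BL_{2}$ in the third. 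The second and fourth inequalities come from $(\star)$ for $(T_{j})=(S\setminus S_{j})$, $q_{j}=p_{j}/(p-1)$, $c_{j}=\widetilde{d}_{j}$, $q=p/(p-1)$, raised to the power $p-1$; here $q-1=1/(p-1)$, $\sum_{j}q_{j}c_{j}=d$ gives $\big(\prod_{j}c_{j}^{q_{j}c_{j}}/d^{d}\big)^{p-1}=\prod_{j}c_{j}^{p_{j}c_{j}}/d^{d(p-1)}$, and $BL_{2}((u_{k})_{k\in S},S,(S\setminus S_{j}),(p_{j}/(p-1)))^{p-1}=BL_{1}((u_{k})_{k\in S},S,(S_{j}),(p_{j}))$, combined once more with Lemma \ref{lem:EqualConstants} when $(u_{k})=(v_{k})$.

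To prove $(\star)$ I would invoke the reverse affine Finner inequality, i.e.\ the ``moreover'' part of Theorem \ref{thm:BrascampLiebSpannedInSjIntro}, in $\R^{d}$. We may assume $M:=\|f\|_{\infty}>0$ and $\int_{G_{j}}f>0$ for all $j$, else the right-hand side vanishes. Set $\mu_{j}:=c_{j}/d\in(0,1]$; then $\sum_{j}q_{j}\mu_{j}=\tfrac1d\sum_{j}q_{j}c_{j}=1$. Define $f_{j}\colon G_{j}\to[0,1]$ by $f_{j}(x):=M^{-1}f(x/\mu_{j})$, so that $\int_{G_{j}}f_{j}=M^{-1}\mu_{j}^{c_{j}}\int_{G_{j}}f$. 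If $x=\sum_{j}q_{j}x_{j}$ with $x_{j}\in G_{j}$, put $y_{j}:=x_{j}/\mu_{j}\in G_{j}$, so that $x=\sum_{j}(q_{j}\mu_{j})y_{j}$ is a convex combination and log-concavity gives
$$
f(x)\ \ge\ \prod_{j}f(y_{j})^{q_{j}\mu_{j}}\ =\ M\prod_{j}f_{j}(x_{j})^{q_{j}\mu_{j}}\ \ge\ M\prod_{j}f_{j}(x_{j})^{q_{j}},
$$
the last step since $0\le f_{j}\le1$ and $\mu_{j}\le1$. Hence $M^{-1}f$ and $(f_{j})_{j}$ verify the hypothesis of Theorem \ref{thm:BrascampLiebSpannedInSjIntro}, so $M^{-1}\int_{\R^{d}}f\ge\frac{1}{BL_{2}}\prod_{j}(\int_{G_{j}}f_{j})^{q_{j}}=\frac{M^{-q}}{BL_{2}}\prod_{j}\mu_{j}^{q_{j}c_{j}}\prod_{j}(\int_{G_{j}}f)^{q_{j}}$; multiplying by $M$ and using $\prod_{j}\mu_{j}^{q_{j}c_{j}}=\prod_{j}c_{j}^{q_{j}c_{j}}/d^{\sum_{j}q_{j}c_{j}}=\prod_{j}c_{j}^{q_{j}c_{j}}/d^{d}$ yields $(\star)$.

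The non-routine part is concentrated in the reduction: verifying the four trace identities of the first paragraph, and checking that, in each of the four cases, Lemma \ref{lem:EqualConstants} and the weight-rescaling identity turn the constant $1/BL_{2}$ coming out of Theorem \ref{thm:BrascampLiebSpannedInSjIntro} into precisely the $1/BL_{1}$ or $1/BL_{2}$ written in the statement — elementary but tedious exterior-algebra bookkeeping, to be carried out case by case. I would also note that the presence of the \emph{free} maximum $\max_{x\in H}|K\cap(x+H^{\perp})|$ on the left is exactly what makes $(\star)$, hence the theorem, hold for \emph{every} convex body without the origin-type assumptions of \cite{AAGJV,BGL}; Brunn's theorem is the only analytic input, and it enters only through the log-concavity of $f$ that powers the factorisation displayed above.
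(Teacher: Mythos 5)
Your proposal is correct and follows essentially the same route as the paper: the paper first proves the functional inequalities of Theorem \ref{thm:ReverseBL1FunctionWithoutPowers} (your $(\star)$ and its three variants) by the identical rescaling-plus-log-concavity argument fed into the reverse affine Finner/Brascamp--Lieb inequalities, and then applies them to the marginal $f(x)=|K\cap(x+H^\perp)|$ on $H$. The only difference is organizational — you derive all four cases from the single $BL_2$ version via the dual basis, Lemma \ref{lem:EqualConstants} and the complement-cover rescaling, whereas the paper states the four functional inequalities separately — but these are exactly the equivalences the paper itself uses, so the substance is the same.
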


\begin{rmk}
Notice that no assumption on the barycentre was made. Taking into account (see \cite{F}) that if $K$ is a centred convex body and $H\in G_{n,d}$ then
$$
\max_{x\in H}|K\cap(x+H^\perp)|\leqslant \left(\frac{n+1}{n-d+1}\right)^{n-d}|K\cap H^\perp|
$$
we can obtain estimates in terms of the volume of sections through the centroid. If we assume that the section of maximal volume with subspaces parallel to $H^\perp$ is the one through the origin, then the value of the constant in the inequalities in Theorem \ref{thm:RestrictedDualBollobasThomason1Intro} is better in some special cases, providing extensions of inequality \eqref{eq:dualRestrictedAAGJV}.
\end{rmk}

The paper is structured as follows. In Section \ref{sec:Preliminaries} we introduce the notation that we use, as well as provide the necessary known results that we use in the paper. In Section \ref{sec:Brascamp-LiebInequalities} we collect various versions of the affine invariant Finner inequality and its reverse, as well as the proof of Theorem \ref{thm:affineFinnerIntro}, which we will use in order to prove several extensions of the aforementioned inequalities. Sections \ref{sec:Loomis-WhitneyInequalities} and \ref{sec:DualLoomis-WhiteneyInequalities} will be devoted to the proof of the Loomis-Whitney and dual Loomis-Whitney inequalities, respectively, as well as their functional and restricted versions. 

\section{Preliminaries}\label{sec:Preliminaries}
In this section we provide the necessary background that we will use in order to prove the different versions of Loomis-Whitney and dual Loomis-Whitney type inequalities.

\subsection{Log-concave functions}

A function $f\colon \R^n\to[0,\infty)$ is called log-concave if $f(x)=e^{-v(x)}$ where $v:\R^n\to(-\infty,\infty]$ is a convex function. It is well-known that any integrable log-concave function $f\colon \R^n\to[0,\infty)$ is bounded and has moments of all orders (see, for instance \cite[Lemma 2.2.1]{BGVV}). If $K\subseteq\R^n$ is a convex body then its characteristic function $\chi_K$ is integrable and log-concave, with integral $|K|$ and if $K$ is a convex body containing the origin, then $e^{-\Vert \cdot\Vert_K}$, where $\Vert x\Vert_K=\inf\{\lambda>0\,:\,x\in\lambda K\}$ is the Minkowski functional associated to $K$, is integrable and log-concave, with integral $n!|K|$. The set of integrable log-concave functions in $\R^n$ will be denoted by $\mathcal{F}(\R^n)$.

For any $f:\R^n\to\R$ and any $H\in G_{n,k}$, the projection of $f$ onto $H$ is the function defined by
$$
P_Hf(x)=\sup_{y\in H^\perp}f(x+y)\quad x\in H.
$$
Notice that $\Vert P_Hf\Vert_\infty=\Vert f\Vert_\infty$ and that if $f\in\mathcal{F}(\R^n)$ $f(x)=\Vert f\Vert_\infty e^{-v(x)}$ where $v:\R^n\to[0,\infty)$ is a convex function then $P_Hf$ is the log-concave function, integrable on $H$, $P_Hf(x)=\Vert f\Vert_\infty e^{-w(x)}$ where $w:H\to[0,\infty)$ is the convex function whose epigraph, ${\rm epi}(w)=\{(x,t)\in H\times[0,\infty)\,:\,w(x)\geqslant t$\}, is the projection onto $\overline{H}={\rm span}\{H, e_{n+1}\}$ of ${\rm epi}(v)=\{(x,t)\in\R^n\times[0,\infty)\,:\,v(x)\geqslant t\}$. If $K\subseteq\R^n$ is a convex body, then for any $H\in G_{n,k}$ we have that $P_H\chi_K=\chi_{P_HK}$ and if in addition $K$ contains the origin then $P_H e^{-\Vert\cdot\Vert_K}=e^{-\Vert\cdot\Vert_{P_HK}}$. Given any integrable log-concave function $f$ on a linear subspace $H\in G_{n,k}$ we will denote by $\Vert f\Vert_1$ its integral on the subspace $H$.

\subsection{Berwald's inequality}
Berwald's inequality, which was proved in \cite{Ber} for $0<\gamma_1<\gamma_2$ and was extended to the range $-1<\gamma_1<\gamma_2$ in \cite[Theorem 5.1]{GZ}, states the following.

\begin{thm} \label{berwald01}
Let $-1<\gamma_1<\gamma_2$, $K\subseteq\R^n$ a convex body, and $f\colon K\to[0,+\infty)$ be concave, continuous, and not identically zero. Then,
$$
\left(\frac{{\gamma_2+n\choose n}}{|K|}\int_K f(x)^{\gamma_2}\,dx\right)^{1/\gamma_2}
\leqslant
\left(\frac{{\gamma_1+n\choose n}}{|K|}\int_K f(x)^{\gamma_1}\,dx\right)^{1/\gamma_1}.
$$
Equality holds if and only if $f$ is a roof function over a point in $K$.
\end{thm}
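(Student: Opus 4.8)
The plan is to reduce Berwald's inequality to the assertion that
$$
A(\gamma):=\left(\frac{\binom{\gamma+n}{n}}{|K|}\int_K f(x)^{\gamma}\,dx\right)^{1/\gamma},\qquad \gamma\in(-1,\infty)\setminus\{0\},
$$
is non-increasing in $\gamma$ (the degenerate case of a zero exponent then follows by continuity), and to prove this by comparing $f$ with an appropriate roof (cone) function on $K$. Write $M=\max_K f$ and $F_f(t)=|\{x\in K:f(x)\ge t\}|$, so that $\int_K f^{\gamma}\,dx=\int t^{\gamma}\,d\mu_f(t)$ where $\mu_f$ is the push-forward of Lebesgue measure on $K$ under $f$, a Borel measure of total mass $|K|$ supported on $[0,M]$. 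The structural input is the Brunn--Minkowski inequality: since $f$ is concave and continuous, $\{f\ge t\}$ is a convex body for $t<M$ and $\{f\ge(1-\theta)a+\theta b\}\supseteq(1-\theta)\{f\ge a\}+\theta\{f\ge b\}$, so $t\mapsto F_f(t)^{1/n}$ is concave and non-increasing on $[0,M]$. For the roof function $r$ over a point $x_0\in K$ with apex value $c>0$, i.e.\ the concave function determined by $\{r\ge t\}=x_0+(1-t/c)(K-x_0)$ for $0\le t\le c$, one has $F_r(t)=(1-t/c)^n|K|$, so $F_r^{1/n}$ is affine, and a Beta-function computation gives $\binom{\gamma+n}{n}|K|^{-1}\int_K r^{\gamma}\,dx=c^{\gamma}$ for all $\gamma>-1$; in particular $A(\gamma)\equiv c$ for such an $r$, consistently with the claimed equality case.

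Fix $-1<\gamma_1<\gamma_2$. The basic identity is that, for any roof function $r$ with apex $c\ge M$ and any $\gamma\in(-1,\infty)\setminus\{0\}$,
$$
\int_K f^{\gamma}\,dx-\int_K r^{\gamma}\,dx=\gamma\int_0^{c}t^{\gamma-1}\bigl(F_f(t)-F_r(t)\bigr)\,dt,
$$
obtained from the layer-cake formula when $\gamma>0$ and from integration by parts when $\gamma<0$. Applying it with $r=r_0$, the roof over a maximiser of $f$ (for which $F_f\ge F_{r_0}$ pointwise, by concavity of $f$), gives $A(\gamma)\ge M$ for every $\gamma$. Now take $r$ to be the roof with apex $c:=A(\gamma_1)$, which is $\ge M$; then $\int_K r^{\gamma_1}\,dx=\int_K f^{\gamma_1}\,dx$, so $G:=F_f-F_r$ satisfies $\int_0^{c}t^{\gamma_1-1}G(t)\,dt=0$. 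The key point is that $G$ changes sign at most once on $(0,c)$, from nonnegative to nonpositive: on $[0,M]$ the function $F_f^{1/n}-F_r^{1/n}$ is concave (concave minus affine) and vanishes at $t=0$, hence is nonnegative then nonpositive; on $(M,c]$ we have $F_f\equiv0\le F_r$; and $G\equiv0$ beyond $c$. Since $a\mapsto a^{n}$ is increasing on $[0,\infty)$ and $F_f,F_r\ge0$, the sign of $G=(F_f^{1/n})^{n}-(F_r^{1/n})^{n}$ agrees with that of $F_f^{1/n}-F_r^{1/n}$; thus $G$ changes sign at a single point $t_0\in(0,c)$, unless $G\equiv0$.

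To finish, using $\int_0^{c}t^{\gamma_1-1}G(t)\,dt=0$ and that $t\mapsto t^{\gamma_2-\gamma_1}$ is strictly increasing,
$$
\int_0^{c}t^{\gamma_2-1}G(t)\,dt=\int_0^{c}\bigl(t^{\gamma_2-\gamma_1}-t_0^{\gamma_2-\gamma_1}\bigr)\,t^{\gamma_1-1}G(t)\,dt\le0,
$$
since on each of $(0,t_0)$ and $(t_0,c)$ the two factors of the integrand have opposite signs. By the basic identity, $\int_K f^{\gamma_2}\,dx-\int_K r^{\gamma_2}\,dx=\gamma_2\int_0^{c}t^{\gamma_2-1}G(t)\,dt$ is therefore $\le0$ when $\gamma_2>0$ and $\ge0$ when $\gamma_2<0$. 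If $\gamma_2>0$ this gives $\int_K f^{\gamma_2}\,dx\le\int_K r^{\gamma_2}\,dx$, and raising to the power $1/\gamma_2>0$ yields $A(\gamma_2)\le c=A(\gamma_1)$; if $\gamma_2<0$ the inequality between the integrals reverses, but so does the exponentiation $1/\gamma_2<0$, and again $A(\gamma_2)\le A(\gamma_1)$. This proves the monotonicity. In the equality case the integral above vanishes although its integrand has constant sign, so $G\equiv0$; thus $f$ has the same distribution as the roof function $r$, and the equality case of the Brunn--Minkowski inequality forces the superlevel sets of $f$ to be a nested family of homothets shrinking linearly to a point, that is, $f$ is a roof function over a point of $K$.

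The step I expect to require the most care is making the argument rigorous uniformly for negative exponents $\gamma\in(-1,0)$, where $f^{\gamma}$ blows up on $\{f\text{ small}\}$: one must justify the finiteness of $\int_K f^{\gamma}\,dx$ and the vanishing of the boundary terms in the integration by parts. Both reduce to the estimate $|K|-F_f(t)=O(t)$ as $t\to0^{+}$, which is itself a consequence of Brunn--Minkowski, since $F_f^{1/n}$ is concave and non-increasing on $[0,M]$ and hence has a finite one-sided derivative at $t=0$. A secondary technical point is the equality analysis, which uses the equality characterisation in the Brunn--Minkowski inequality together with the observation that a linearly shrinking nested family of convex bodies is exactly the family of superlevel sets of a cone.
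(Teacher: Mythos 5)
The paper does not prove Theorem \ref{berwald01}: it is quoted as a known result, with \cite{Ber} for $0<\gamma_1<\gamma_2$ and \cite[Theorem 5.1]{GZ} for the extended range $\gamma_1>-1$, so there is no in-paper argument to compare against. Your blind proof is correct and is essentially the classical Berwald--Borell--Gardner--Zhang route: normalise by comparing $f$ with the cone (roof) function whose apex height is chosen so that the $\gamma_1$-moments agree, observe via Brunn--Minkowski that $t\mapsto F_f(t)^{1/n}$ is concave so that $F_f-F_r$ changes sign at most once (from $+$ to $-$), and then exploit the vanishing of $\int_0^c t^{\gamma_1-1}(F_f-F_r)\,dt$ together with the monotonicity of $t\mapsto t^{\gamma_2-\gamma_1}$ to get the sign of the $\gamma_2$-moment difference. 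All the delicate points are handled or at least correctly flagged: the identity $\int f^{\gamma}-\int r^{\gamma}=\gamma\int_0^c t^{\gamma-1}(F_f-F_r)\,dt$ for $-1<\gamma<0$ does rest on $|K|-F_f(t)=O(t)$, which follows as you say from the finiteness of the one-sided derivative of the concave non-increasing function $F_f^{1/n}$ at $t=0$; the preliminary bound $A(\gamma)\geqslant M$ guaranteeing $c\geqslant M$ is needed and is correctly obtained from the roof over a maximiser; and the case distinction on the sign of $\gamma_2$ is handled consistently with the reversal under $x\mapsto x^{1/\gamma_2}$.

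Two places deserve one more written line each, though neither is a gap. First, the claim that a concave function vanishing at the left endpoint of its interval is nonnegative and then nonpositive should be stated as a (one-line) lemma: if $h$ is concave on $[0,M]$ with $h(0)=0$ and $h(t_1)<0<h(t_2)$ for some $t_1<t_2$, then writing $t_1$ as a convex combination of $0$ and $t_2$ contradicts concavity; you should also note that $G$ may jump at $t=M$ when the maximum of $f$ is attained on a set of positive measure, but since $G\leqslant 0$ on $(M,c]$ the single-sign-change structure survives. Second, in the equality case, $F_f\equiv F_r$ gives $\{f\geqslant t\}\supseteq(1-t/c)K+(t/c)\{f=M\}$ with equal volumes; to conclude that $f$ is a roof over a \emph{point} you must rule out that $\{f=M\}$ is a nondegenerate lower-dimensional convex set (a segment, say), which follows because Minkowski-adding a positive-length segment to a body of nonempty interior strictly increases volume. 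With these remarks made explicit, the proof is complete and matches the strength of the cited statement, including the characterisation of equality.
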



Functional versions of the above for functions in $\mathcal{F}(\R^n)$, the set of integrable log-concave functions on $\R^n$, was proved in  \cite[Lemma 3.3]{AAGJV} for the range $\gamma>0$, and  was extended to the range $\gamma>-1$ in \cite[Theorem 1.1]{ABG}.

\begin{lemma}\label{lem:BerwaldExponentialDensity}
Let $f\in\mathcal{F}(\R^n)$ and let $C$ be the convex set $C=\{(x,t)\in\R^n\times [0,\infty):f(x)\geqslant e^{-t}\Vert f\Vert_\infty\}$. Let $h:C\to[0,\infty)$ be a continuous, concave, not identically zero function. Then,
$$
\Phi_\gamma(h):=\left(\frac{1}{\Gamma\left(1+\gamma\right)\int_C e^{-t}dxdt}\int_L h^\gamma(x,t)e^{-t}dxdt\right)^\frac{1}{\gamma}
$$
is decreasing in $\gamma\in(-1,\infty)$.
\end{lemma}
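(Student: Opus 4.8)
The plan is to reduce the statement to the classical Berwald inequality (Theorem \ref{berwald01}) by working inside the convex body $C\subseteq\R^{n+1}$ — except that $C$ is generally unbounded, so a direct application is not possible, and this is where the main work lies. First I would record the basic structure of $C$: since $f=\|f\|_\infty e^{-v}$ with $v$ convex and $v\geqslant 0$, the set $C=\{(x,t): v(x)\leqslant t\}$ is exactly the epigraph of $v$, hence closed and convex, and the weight $e^{-t}$ is integrable over $C$ precisely because $f$ is integrable (by Fubini, $\int_C e^{-t}\,dx\,dt=\int_{\R^n}\int_{v(x)}^\infty e^{-t}\,dt\,dx=\int_{\R^n}e^{-v(x)}\,dx=\|f\|_\infty^{-1}\|f\|_1<\infty$). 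The function $\Phi_\gamma(h)$ is thus the $L^\gamma(e^{-t}\,dx\,dt)$-average of the concave function $h$ on $C$, normalised by the constant $\Gamma(1+\gamma)$ which is there to make the extremal (roof) functions give equality, exactly as the binomial coefficient ${\gamma+n\choose n}$ does in the finite-dimensional statement.

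Next I would handle the unboundedness by an exhaustion argument. For $R>0$ set $C_R=C\cap(B_R^n\times[0,R])$, a convex body in $\R^{n+1}$ on which $h$ restricts to a continuous concave function; apply Theorem \ref{berwald01} — or rather its weighted analogue — on $C_R$ and then let $R\to\infty$, using monotone/dominated convergence (the weight $e^{-t}$ and integrability of $f$ give the domination) to recover $\Phi_\gamma(h)$ and the normalising integral $\int_C e^{-t}\,dx\,dt$. The cleaner route, which I would actually take, is to avoid re-proving a weighted Berwald inequality from scratch and instead lift one more dimension: encode the weight $e^{-t}$ as Lebesgue measure. Concretely, the measure $e^{-t}\,dx\,dt$ on $C$ is the pushforward of $(n+1+k)$-dimensional Lebesgue measure on $\{(x,t,y)\in C\times\R^k : |y|\leqslant \rho(t)\}$ for a suitable radial profile — but a slicker device is to use that $e^{-t}\,dt$ on $[a,\infty)$ is, up to constants, the limit of $\frac1{N}\big(1-\frac{t-a}{N}\big)_+^{N-1}\,dt$, i.e. a rescaled "cone" measure in $N$ extra dimensions; applying the classical Berwald inequality on the resulting convex body in $\R^{n+1+N}$ and sending $N\to\infty$ converts the binomial normalisation ${\gamma+n+N\choose n+N}$ into $\Gamma(1+\gamma)^{-1}$ in the limit (using $\binom{\gamma+M}{M}\sim M^\gamma/\Gamma(1+\gamma)$ as $M\to\infty$), yielding precisely $\Phi_\gamma$. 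This is the approach of \cite[Theorem 1.1]{ABG}, and I would follow it; the monotonicity of $\gamma\mapsto\Phi_\gamma(h)$ on $(-1,\infty)$ then passes to the limit from the monotonicity in the classical statement.

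The main obstacle is twofold: first, justifying the limiting procedure rigorously in the range $-1<\gamma<0$, where $h^\gamma$ blows up near the zero set of $h$ and the integrals must be shown to be finite and to converge — here one uses that $h$, being concave and not identically zero on a set of positive measure, cannot vanish too fast, together with the integrability of the weight; and second, tracking the normalising constants through the dimensional limit so that they collapse exactly to $\Gamma(1+\gamma)$ and not merely up to an unknown factor. Once the weighted Berwald inequality $\Phi_{\gamma_2}(h)\leqslant\Phi_{\gamma_1}(h)$ for $-1<\gamma_1<\gamma_2$ is established, the claimed monotonicity is immediate. I would also remark that the equality case (roof functions, i.e. $h$ affine truncated at a vertex of $C$) is inherited from the classical statement in the limit, though that is not needed for what follows.
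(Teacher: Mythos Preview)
The paper does not give its own proof of this lemma: it is quoted as a known result, proved in \cite[Lemma 3.3]{AAGJV} for $\gamma>0$ and extended to $\gamma>-1$ in \cite[Theorem 1.1]{ABG}. Your sketch is essentially a reconstruction of the latter reference (as you yourself note): encode the exponential weight as a limit of cone measures in $N$ extra dimensions, apply the classical Berwald inequality on the resulting convex body, and let $N\to\infty$ so that the binomial normalisation $\binom{\gamma+M}{M}$ collapses to $\Gamma(1+\gamma)^{-1}$. That is the right idea and matches what the paper invokes; there is nothing further to compare.
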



The following result can be seen as a degenerate version of Lemma \ref{lem:BerwaldExponentialDensity} and its proof can be found in \cite[Lemma 2.1]{ABG}. In this case we can characterise the equality cases.

\begin{lemma}\label{lem:0-dimBerwaldExponentialDensity}
Let $h:[0,\infty)\to[0,\infty)$ be a continuous, concave, not identically zero function. Then,
$$
\Phi_\gamma(h):=\left(\frac{1}{\Gamma\left(1+\gamma\right)}\int_0^\infty h^\gamma(t)e^{-t}dt\right)^\frac{1}{\gamma}
$$
is decreasing in $\gamma\in(-1,\infty)$. Furthermore, if there exists $-1<\gamma_1<\gamma_2$ such that $\Phi_{\gamma_1}(h)=\Phi_{\gamma_2}(h)$ then $h$ is a linear function and $\Phi_\gamma$ is constant in $\gamma$.
\end{lemma}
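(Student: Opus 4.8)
The plan is to run the classical Berwald comparison argument, with the linear map $\ell(t)=ct$ playing the role of the ``extremal'' profile: since $\int_0^\infty(ct)^\gamma e^{-t}\,dt=c^\gamma\Gamma(1+\gamma)$, one has $\Phi_\gamma(\ell)=c$ for \emph{every} $\gamma\in(-1,\infty)$. It suffices to compare $\Phi_{\gamma_1}(h)$ with $\Phi_{\gamma_2}(h)$ for a fixed pair $-1<\gamma_1<\gamma_2$ with $\gamma_1,\gamma_2\neq0$, the value at $\gamma=0$ following by continuity. First I would record three elementary facts about a continuous concave $h\geqslant0$ on $[0,\infty)$ which is not identically zero: it is non-decreasing (a concave function on $[0,\infty)$ that ever strictly decreases eventually goes negative); it is positive on $(0,\infty)$; and since $h(t)/t$ is non-increasing when $h(0)=0$ we get $h(t)\geqslant h(1)\,t$ near $0$, so $\int_0^\infty h^\gamma e^{-t}\,dt\in(0,\infty)$ for all $\gamma>-1$. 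Then set $c:=\Phi_{\gamma_1}(h)\in(0,\infty)$ and $\ell(t):=ct$, so that $\int_0^\infty(h^{\gamma_1}-\ell^{\gamma_1})e^{-t}\,dt=0$.

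Next I would establish a single-crossing property: if $h\equiv\ell$ there is nothing to do (equality case), and otherwise $h-\ell$ is concave and $\geqslant0$ at $0$, so $\{t:h(t)\geqslant\ell(t)\}=[0,t^*]$ for some threshold $t^*$, and the normalisation above together with $h\not\equiv\ell$ forces $t^*\in(0,\infty)$, with $h\geqslant\ell$ on $[0,t^*]$ and $h\leqslant\ell$ on $[t^*,\infty)$. For the core estimate, write $h^{\gamma_2}-\ell^{\gamma_2}=\phi(h^{\gamma_1})-\phi(\ell^{\gamma_1})$ with $\phi(x)=x^{\gamma_2/\gamma_1}$, which is convex when $\gamma_2/\gamma_1\notin(0,1)$ and concave when $\gamma_2/\gamma_1\in(0,1)$, and use the supporting line of $\phi$ at $h^{\gamma_1}$ to get
\[
h^{\gamma_2}-\ell^{\gamma_2}\ \lessgtr\ \phi'(h^{\gamma_1})\,(h^{\gamma_1}-\ell^{\gamma_1})=\frac{\gamma_2}{\gamma_1}\,h^{\gamma_2-\gamma_1}(h^{\gamma_1}-\ell^{\gamma_1}),
\]
with $\leqslant$ in the convex cases and $\geqslant$ in the concave one. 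Since $h^{\gamma_2-\gamma_1}$ is non-negative and non-decreasing while $\gamma_1(h^{\gamma_1}-\ell^{\gamma_1})$ is $\geqslant0$ on $[0,t^*]$ and $\leqslant0$ on $[t^*,\infty)$, the pointwise inequality persists with $h^{\gamma_2-\gamma_1}$ replaced by the constant $h(t^*)^{\gamma_2-\gamma_1}$, and the resulting integrand $\frac{\gamma_2}{\gamma_1}h(t^*)^{\gamma_2-\gamma_1}(h^{\gamma_1}-\ell^{\gamma_1})$ has zero $e^{-t}\,dt$-integral by the normalisation; hence $\int_0^\infty(h^{\gamma_2}-\ell^{\gamma_2})e^{-t}\,dt$ has the sign that, via $x\mapsto x^{1/\gamma_2}$, yields $\Phi_{\gamma_2}(h)\leqslant c=\Phi_{\gamma_1}(h)$.

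For the equality clause, if $\Phi_{\gamma_1}(h)=\Phi_{\gamma_2}(h)$ then every inequality above is an equality almost everywhere; since $\phi(x)=x^{\gamma_2/\gamma_1}$ is strictly convex or strictly concave (affine on no interval, as $\gamma_2/\gamma_1\notin\{0,1\}$), equality in the supporting-line step forces $h^{\gamma_1}=\ell^{\gamma_1}$ a.e., hence $h=\ell$ everywhere by continuity, i.e. $h(t)=ct$; for such $h$ one has $\Phi_\gamma(h)=c$ for all $\gamma$, which is the stated dichotomy.

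I expect the main difficulty to be bookkeeping rather than conceptual: the engine (supporting line $+$ single crossing $+$ the normalisation identity) is the standard mechanism behind Berwald-type inequalities, and the care lies in tracking signs across the three regimes $0<\gamma_1<\gamma_2$, $\gamma_1<0<\gamma_2$, $\gamma_1<\gamma_2<0$, where the direction of the supporting-line inequality, the sign of $\frac{\gamma_2}{\gamma_1}$, and the sign of $h^{\gamma_1}-\ell^{\gamma_1}$ all flip, and in the harmless but necessary remark that $t=0$ carries no $e^{-t}\,dt$-mass even though $\ell^{\gamma_1}$ (and possibly $h^{\gamma_1}$) is infinite there when $\gamma_1<0$. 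As an independent check, the monotonicity alone can also be recovered by applying the $n$-dimensional Berwald inequality of Theorem~\ref{berwald01} to $f(x)=h(x_1)$ on the dilated simplex $\{x\in\R^n_+:\sum_i x_i\leqslant n\}$ and letting $n\to\infty$, using $(1-t/n)^{n-1}\to e^{-t}$ and $\binom{\gamma+n}{n}^{1/\gamma}\sim n\,\Gamma(1+\gamma)^{-1/\gamma}$, so that the powers of $n$ cancel and one is left with exactly $\Phi_{\gamma_2}(h)\leqslant\Phi_{\gamma_1}(h)$.
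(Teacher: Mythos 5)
Your argument is correct. Note that the paper does not actually reproduce a proof of this lemma: it only points to \cite[Lemma 2.1]{ABG}, so there is no in-text proof to compare against line by line; but what you give is the classical Berwald--Borell comparison mechanism (normalise so that the $\gamma_1$-means of $h$ and of the linear profile $\ell(t)=ct$ agree, observe that $\Phi_\gamma(\ell)=c$ for all $\gamma$, use concavity to get the single crossing, and transfer the sign information from level $\gamma_1$ to level $\gamma_2$ via the supporting line of $x\mapsto x^{\gamma_2/\gamma_1}$ together with the monotone weight $h^{\gamma_2-\gamma_1}$), and this is precisely the kind of argument the cited reference runs. Your sign bookkeeping checks out in all three regimes, the preliminary facts (monotonicity and strict positivity of $h$ on $(0,\infty)$, finiteness of the integrals for $\gamma\in(-1,0)$ via $h(t)\geqslant h(1)t$ near $0$) are the right ones and are correctly justified, and the equality case follows as you say because strict convexity/concavity of $\phi$ forces $h^{\gamma_1}=\ell^{\gamma_1}$ a.e.\ once the supporting-line inequality is saturated. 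The only cosmetic caveat is that the statement's formula is undefined at $\gamma=0$, so one should either exclude $\gamma=0$ or define $\Phi_0$ as the limit $\exp\left(\int_0^\infty\log h(t)\,e^{-t}dt+\Gamma'(1)\right)$; your remark that this value is recovered by continuity is the standard fix. The limiting derivation from Theorem~\ref{berwald01} on large simplices is a nice consistency check but, as you note, it only recovers the monotonicity, not the equality characterisation.
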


\section{Affine Invariant Finner Inequality}\label{sec:Brascamp-LiebInequalities}

In this section we prove the affine invariant Finner inequality, already mentioned in the introduction, together with its reverse inequality.

\begin{thm}\label{thm:AffineFinner}
	Let $\{w_i\}_{i=1}^n$ be a basis of $\R^n$ and let $(S_1,\dots, S_m)$ form a uniform cover of $[n]$ with weights $(p_1,\dots p_m)$. If $\widetilde{H}_j={\rm span}\{w_k\,:\,k\not\in S_j\}$ and $p=\sum_{j=1}^mp_j$, then for any integrable $f_j:\widetilde{H}_j^\perp\to[0,\infty)$ we have
	$$
	\int_{\R^n}\prod_{j=1}^mf_j^{p_j}(P_{\widetilde{H}_j^\perp}x)dx\leqslant\frac{\prod_{j=1}^m|\wedge_{k\not\in S_j}w_k|^{p_j}}{|\wedge_{i=1}^nw_i|^{p-1}}\prod_{j=1}^m\left(\int_{\widetilde{H}_j^\perp}f_j(x)dx\right)^{p_j}.
	$$
	Moreover, for every integrable $f\colon \R^n\to[0,\infty)$ and $f_j:\widetilde{H}_j^\perp\to[0,\infty)$ such that $f(x)\geqslant\prod_{j=1}^mf_j^{p_j}(x_j)$ whenever  $x=\sum_{j=1}^m p_jx_j$ for some $x_j\in \widetilde{H}_j^\perp$, we have
	$$
	\int_{\R^n}f(x)dx\geqslant\frac{|\wedge_{i=1}^nw_i|^{p-1}}{\prod_{j=1}^m|\wedge_{k\not\in S_j}w_k|^{p_j}}\prod_{j=1}^m\left(\int_{\widetilde{H}_j^\perp}f_j(x)dx\right)^{p_j}.
	$$
\end{thm}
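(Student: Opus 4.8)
The plan is to reduce the affine statement to the classical Finner inequality (Theorem~\ref{thm:invariant finner's ineq}) by a single linear change of variables, and then to read off the constant from two elementary parallelepiped-volume identities.

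For the direct inequality, I would let $M$ be the linear map with $Me_i=w_i$ for the standard basis $(e_i)$, so that $|\det M|=|\wedge_{i=1}^n w_i|$, and set $E_j:=\mathrm{span}\{e_k:k\in S_j\}$, which gives $M(E_j^\perp)=\widetilde{H}_j$ and hence $\ker(P_{\widetilde{H}_j^\perp}M)=E_j^\perp$. Substituting $x=My$, the function $y\mapsto f_j(P_{\widetilde{H}_j^\perp}(My))$ is invariant under translations by $E_j^\perp$, so it equals $g_j(P_{E_j}y)$ where $g_j:=f_j\circ T_j$ and $T_j:=P_{\widetilde{H}_j^\perp}M|_{E_j}\colon E_j\to\widetilde{H}_j^\perp$; this $T_j$ is a linear isomorphism since $\ker T_j=E_j\cap E_j^\perp=\{0\}$ and $\dim E_j=|S_j|=\dim\widetilde{H}_j^\perp$. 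Thus the left-hand side equals $|\det M|\int_{\R^n}\prod_j g_j^{p_j}(P_{E_j}y)\,dy$, and, each $g_j$ being integrable on $E_j$ as a linear pullback of an integrable function, Theorem~\ref{thm:invariant finner's ineq} bounds this by $|\det M|\prod_j\bigl(\int_{E_j}g_j\bigr)^{p_j}$. Finally $\int_{E_j}g_j=J_j^{-1}\int_{\widetilde{H}_j^\perp}f_j$ with $J_j=|\wedge_{k\in S_j}T_je_k|=|\wedge_{k\in S_j}P_{\widetilde{H}_j^\perp}w_k|$, and the base-times-height formula for parallelepipeds (splitting $\{w_i\}$ into $\{w_k:k\notin S_j\}$, which spans $\widetilde{H}_j$, and $\{w_k:k\in S_j\}$) gives $|\wedge_{i=1}^n w_i|=|\wedge_{k\notin S_j}w_k|\cdot J_j$. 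Plugging in and using $\sum_jp_j=p$ collapses everything to the constant $\prod_j|\wedge_{k\notin S_j}w_k|^{p_j}/|\wedge_{i=1}^n w_i|^{p-1}$.

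For the reverse inequality I would run the mirror argument with $N:=(M^{-1})^\top$, whose columns $v_i$ form the dual basis of $(w_i)$ and satisfy $N(E_j)=\widetilde{H}_j^\perp$. After substituting $x=Ny$, any splitting $y=\sum_j p_jy_j$ with $y_j\in E_j$ yields $Ny=\sum_j p_j(Ny_j)$ with $Ny_j\in\widetilde{H}_j^\perp$, so the hypothesis on $f$ becomes exactly the hypothesis of the reverse Finner inequality for the functions $g_j:=f_j\circ N|_{E_j}$ on $E_j$; this reverse inequality holds with constant $1$ because the underlying datum is geometric, i.e.\ $\sum_jp_jP_{E_j}=\mathrm{Id}$ (see \cite{Bar}). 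Tracking the Jacobian $|\wedge_{k\in S_j}v_k|$ via the observation that $\{v_k:k\in S_j\}$ and $\{P_{\widetilde{H}_j^\perp}w_k:k\in S_j\}$ are dual bases of $\widetilde{H}_j^\perp$, hence have reciprocal parallelepiped volumes, gives $|\wedge_{k\in S_j}v_k|=|\wedge_{k\notin S_j}w_k|/|\wedge_{i=1}^n w_i|$, and combined with $|\det N|=|\wedge_{i=1}^n w_i|^{-1}$ this produces precisely the reciprocal constant. Alternatively, the reverse inequality follows at once from the direct one and Barthe's duality \cite{Bar}.

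The conceptual steps here are essentially free; the part that needs care is the bookkeeping, namely checking that $|\det M|$ (resp.\ $|\det N|$), the Jacobians $J_j$ (resp.\ $|\wedge_{k\in S_j}v_k|$) and the weights $p_j$ recombine into exactly $\prod_j|\wedge_{k\notin S_j}w_k|^{p_j}/|\wedge_{i=1}^n w_i|^{p-1}$. The two ingredients that make this go through are the base-times-height formula and the reciprocity of parallelepiped volumes for dual bases; once those are in hand, the reductions to (reverse) Finner are immediate, since $f_j\circ P_{\widetilde{H}_j^\perp}$ is a cylinder function whose axis $\widetilde{H}_j$ is straightened by $M$ into a coordinate subspace.
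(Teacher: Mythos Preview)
Your argument is correct, and it is genuinely different from the paper's main proof of this theorem. The paper establishes Theorem~\ref{thm:AffineFinner} via the \emph{factorisation method} of \cite{CHV}: given a test function $M\in L^{p'}$, it is written as a telescoping product $M^{p'}=\gamma_n\cdots\gamma_1$ along the directions $w_1,\dots,w_n$, and the factors are regrouped into functions $M_j$ whose fibre integrals over $\widetilde{H}_j$ are computed explicitly; duality and H\"older then give the constant. Your route is instead a single linear change of variables $x=My$ (with $Me_i=w_i$) that flattens each cylinder function $f_j\circ P_{\widetilde{H}_j^\perp}$ into a function of $P_{E_j}y$, so that the classical Finner inequality applies verbatim; the constant then drops out of the base-times-height identity $|\wedge_{i=1}^nw_i|=|\wedge_{k\notin S_j}w_k|\cdot|\wedge_{k\in S_j}P_{\widetilde{H}_j^\perp}w_k|$.

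What each approach buys: your change-of-variables argument is shorter and more transparent, and it makes the appearance of the affine constant almost automatic. The paper's factorisation proof is more self-contained (it does not invoke Theorem~\ref{thm:invariant finner's ineq} as a black box) and illustrates a technique that extends to other multilinear inequalities. It is worth noting that the paper's Appendix gives a change-of-variables proof, but for the equivalent Theorem~\ref{thm:BrascampLiebSpannedInSj} (projections onto $H_j$), using the symmetric square root of $\sum_i w_i\otimes w_i$; the authors remark that \emph{that specific} transformation does not adapt directly to the $\widetilde{H}_j^\perp$ formulation. Your choice of $M$ (rather than $A^{1/2}$) sidesteps this obstruction and yields a direct proof of Theorem~\ref{thm:AffineFinner} in the same spirit. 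For the reverse inequality, both you and the paper ultimately appeal to Barthe's duality \cite{Bar}; your explicit dual-basis computation via $N=(M^{-1})^\top$ is essentially the content of Lemma~\ref{lem:EqualConstants}.
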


\begin{proof}

 Let $Q=|\wedge_{i=1}^nw_i|^{-1/n} $.
	Given $M \in L^{p'}(\R^n)$ with $\|M\|_{p'} = 1$ we write it as
	a telescoping product
	$$ M(x)^{p'} = \gamma_n(x) \dots \gamma_1(x),$$
	where for every $1\leqslant i\leqslant n$
	$$ \gamma_i(x) = Q\; \frac{\int_{\R^{n-i}} M(x+ s_{i+1} \omega_{i+1} + \dots + s_{n} \omega_{n})^{p'} {\rm d} s_n \dots {\rm d} s_{i+1}}
	{\int_{\R^{n-i+1}} M(x+ s_{i} \omega_{i} + \dots  + s_{n} \omega_{n} )^{p'} {\rm d} s_n \dots {\rm d} s_{i}}.$$
	Notice that the numerator in $\gamma_n$ is just $M(x)^{p'}$ while the denominator in $\gamma_1$ is $Q^n$. Moreover for every $1\leqslant i\leqslant n$ and every $x\in\R^n$
	$$ \int_{\R} \gamma_i(x + t \omega_i) {\rm d} t = Q.$$
	
	\medskip
	\noindent
	Let, for every $1 \leqslant j \leqslant m$,
	$$ M_j(x) = \prod_{i =1}^n \gamma_i(x)^{b_{ij}}$$
	where $b_{ij} = 1 - \chi_{S_j}(i) = \chi_{S_j^c}(i)$, and notice that
	$$ M(x) = \prod_{j=1}^m M_j(x)^{p_j/p}.$$
	
	\medskip
	\noindent
For $x\in\mathbb{R}^n$ we write $\displaystyle{x=\sum_{k \in S_j} \lambda_k \omega_k + \sum_{k \not\in S_j} s_k\omega_k}$ and we have 
	
\begin{equation*}\int_{\widetilde{H}_j} M_j(x) dv
= |\wedge_{k \not\in S_j} \omega_k| \int_{\R^{n-|S_j|}}\left( \sum_{k \in S_j} \lambda_k \omega_k + \sum_{k \not\in S_j} s_k\omega_k \right) \prod_{k \not\in S_j} ds_k.
\end{equation*}
	
	\medskip
	\noindent
	So we consider for every $1\leqslant j\leqslant m$
	$$ \int_{\R^{n-|S_j|}}M_j( s_1 \omega_1 + \dots +  s_n \omega_n ) \prod_{k \not\in S_j} ds_k$$
	and look for uniform bounds in $s_i$ for $i \in S_j$.
	
	\medskip
	\noindent
	Now, for each $1\leqslant j\leqslant m$
$$\int_{\R^{n-|S_j|}} M_j( s_1 \omega_1 + \dots +  s_n \omega_n ) \prod_{k\not\in S_j} ds_k$$
$$=\int_{\R^{n-|S_j|}} \prod_{i=1}^n \gamma_{i}( s_1 \omega_1 + \dots +  s_n \omega_n)^{b_{ij}} \prod_{k\not\in S_j}ds_{k}$$
$$=\int \prod_{k \not\in S_j} \gamma_{k}( s_1 \omega_1 + \dots +  s_n
	\omega_n) \prod_{k\not\in S_j} ds_{k}.$$

	
	\medskip
	\noindent
	However, for every $1\leqslant i\leqslant n$, and every $(s_1,\dots,s_n)\in\R^n$
$$\gamma_i( s_1 \omega_1 + \dots +  s_n \omega_n)$$
$$= Q \; \frac{\int_{\R^{n-i}}M(s_1 \omega_1 + \dots +  s_n \omega_n + t_{i+1} \omega_{i+1} + \dots + t_{n} \omega_{n})^{p'} dt_n \dots dt_{i+1}}
	{\int_{\R^{n-i+1}} M(s_1 \omega_1 + \dots +  s_n \omega_n+  t_{i} \omega_{i} + \dots + t_n \omega_n)^{p'} dt_n \dots dt_{i}}$$
$$=Q \; \frac{\int_{\R^{n-i}} M(s_1 \omega_1 + \dots +  s_n \omega_n)^{p'} d s_n \dots d s_{i+1}}
	{\int_{\R^{n-i+1}} M(s_1 \omega_1 + \dots +  s_n \omega_n)^{p'} d s_n \dots d s_{i}}
$$
	is a function $\tilde{\gamma}_i(s_1, \dots , s_i)$ only of the variables $s_1, \dots , s_i$, and for every $(s_1,\dots s_{i-1})\in\R^{i-1}$ the integral $\displaystyle{\int_\R\tilde{\gamma}_i(s_1, \dots , s_i)ds_i}$
	is exactly $Q$.
	
	\medskip
	\noindent
	Therefore, for each $1\leqslant j\leqslant m$, if $S_j^c = \{ i_1 < i_2 < \dots < i_r\}$, with $r=|S_j^c|=n-|S_j|$, we have that
	$$\int_{\R^{n-|S_j|}} M_j( s_1 \omega_1 + \dots +  s_n \omega_n ) \prod_{k \not\in S_j} ds_{k}$$ $$  = \int_{\R^{n-|S_j|}} \tilde{\gamma}_{i_1}(s_1, \dots , s_{i_1}) \tilde{\gamma}_{i_2}(s_1, \dots , s_{i_2}) \dots \tilde{\gamma}_{i_r}(s_1, \dots , s_{i_r}) ds_{i_r} \dots
	ds_{i_2}{\rm d} s_{i_1} = Q^{|S_j^c|}.$$
	
	\medskip
	\noindent
	Therefore, for every $u\in\widetilde{H}_j^\perp$
	$$\int_{\widetilde{H}_j} M_j(u + v) dv = |\wedge_{i \not\in S_j} \omega_i|Q^{|S_j^c|}.$$

	\medskip
	\noindent
	Finally,
	$$ \int_{\mathbb{R}^n} \prod_{j=1}^m f_j(P_{\tilde{H}_j^\perp}x)^{p_j} dx = \left\Vert\prod_{j=1}^m f_j(P_{\tilde{H}_j^\perp} \cdot)^{p_j/p} \right\Vert_p^p$$
	$$  = \sup_{ \Vert M\Vert_{p'} = 1}  \left[\int_{\mathbb{R}^n} \prod_{j=1}^m f_j(P_{\tilde{H}_j^\perp} x)^{p_j/p} M(x) dx\right]^p$$
	and for a given $M\in L^{p^\prime}(\R^n)$ with $\Vert M\Vert_{p^\prime}=1$,
	$$ \int_{\mathbb{R}^n} \prod_{j=1}^m f_j(P_{\tilde{H}_j^\perp} x)^{p_j/p} M(x) dx
	= \int_{\mathbb{R}^n} \prod_{j=1}^m \left[f_j(P_{\tilde{H}_j^\perp} x)M_j(x)\right]^{p_j/p}{\rm
		d} x
	$$
	$$ \leq \prod_{j=1}^m \left(\int_{\mathbb{R}^n} f_j(P_{\tilde{H}_j^\perp} x)M_j(x){\rm d} x \right)^{p_j/p}.$$
	Now, for each $1\leqslant j\leqslant m$
	$$ \int_{\mathbb{R}^n} f_j(P_{\tilde{H}_j^\perp}x)M_j(x)dx = \int_{\tilde{H}_j^\perp} \int_{\tilde{H}_j}
	f_j(u) M_j(u +v) dv du$$
	$$ \leq  \left(\int_{\tilde{H}_j^\perp} f_j(u)du\right) \sup_{u \in \tilde{H}_j^\perp}   \int_{\tilde{H}_j} M_j(u
	+v) dv
	=  \left(\int_{\tilde{H}_j^\perp} f_j(u)du \right) |\wedge_{k \not\in S_j} \omega_k| Q^{|S_j^c|}.$$
	
	\medskip
	\noindent
	Therefore,
	$$ \int_{\mathbb{R}^n} \prod_{j=1}^m f_j(P_{\tilde{H}_j^\perp}x)^{p_j} dx \leq \prod_{j=1}^m\left( |\wedge_{k\not\in S_j} \omega_k| Q^{|S_j^c|} \right)^{p_j}
	\prod_{j=1}^m \left(\int_{\tilde{H}_j^\perp} f_j \right)^{p_j} .$$
	
	\medskip
	\noindent
	Now, because of the covering, notice that $$\sum_{j=1}^m p_j|S_j^c|=\sum_{j=1}^m p_j(n-|S_j|)=np-n,$$
	therefore
	$$ \prod_{j=1}^m Q^{p_j |S_j^c|} = Q^{n(p-1)} = |\wedge_{i=1}^nw_i|^{-(p-1)}.$$
	
	\medskip
	\noindent
	We conclude that
	$$ \int_{\mathbb{R}^n} \prod_{j=1}^m f_j(P_{\tilde{H}_j^\perp}x)^{p_j} d x \leq \frac{\prod_{j=1}^m | \wedge_{i \not\in S_j} \omega_i|^{p_j}}{|\wedge_{i=1}^nw_i|^{p-1}} \prod_{j=1}^m \left(\int_{\tilde{H}_j^\perp} f_j\right)^{p_j}$$ as stated.
	
		Since the constant in the reverse Brascamp-Lieb inequality is the inverse of the constant in Brascamp-Lieb inequality, see \cite{Bar}, we also obtain the reverse inequality.
\end{proof}

\noindent

Taking into account that $(S_1,\dots, S_m)$ forms a uniform cover of $[n]$ with weights $(p_1,\dots, p_m)$ if and only if $(S_1^c,\dots, S_m^c)$ forms a uniform cover of $[n]$ with weights $(p_1^\prime,\dots,p_m^\prime)$ we have the following equivalent theorem.

\begin{thm}\label{thm:AffineFinner2}
Let $\{w_i\}_{i=1}^n$ be a basis of $\R^n$ and let $(S_1,\dots, S_m)$ form a uniform cover of $[n]$ with weights $(p_1,\dots p_m)$. If $H_j={\rm span}\{w_k\,:\,k\in S_j\}$ and $p=\sum_{j=1}^mp_j$, then for any integrable $f_j:H_j^\perp\to[0,\infty)$ we have
$$
\int_{\R^n}\prod_{j=1}^mf_j^\frac{p_j}{p-1}(P_{H_j^\perp}x)dx\leqslant\frac{\prod_{j=1}^m|\wedge_{k\in S_j}w_k|^\frac{p_j}{p-1}}{|\wedge_{i=1}^nw_i|^\frac{1}{p-1}}\prod_{j=1}^m\left(\int_{H_j^\perp}f_j(x)dx\right)^\frac{p_j}{p-1}.
$$
Moreover, for any integrable $f\colon \R^n\to[0,\infty)$ and $f_j:H_j^\perp\to[0,\infty)$ such that $f(x)\geqslant\prod_{j=1}^mf_j^\frac{p_j}{p-1}(x_j)$ whenever  $x=\sum_{j=1}^m \frac{p_j}{p-1}x_j$ for some $x_j\in H_j^\perp$, we have
$$
\int_{\R^n}f(x)dx\geqslant\frac{|\wedge_{i=1}^nw_i|^\frac{1}{p-1}}{\prod_{j=1}^m|\wedge_{k\in S_j}w_k|^\frac{p_j}{p-1}}\prod_{j=1}^m\left(\int_{H_j^\perp}f_j(x)dx\right)^\frac{p_j}{p-1}.
$$
\end{thm}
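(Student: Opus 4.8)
The plan is to deduce Theorem~\ref{thm:AffineFinner2} directly from Theorem~\ref{thm:AffineFinner} by passing to the complementary cover, exactly as indicated by the sentence preceding the statement. First I would record the elementary combinatorial fact already noted in the Introduction: if $(S_1,\dots,S_m)$ is a uniform cover of $[n]$ with weights $(p_1,\dots,p_m)$ and $p=\sum_{j=1}^m p_j$ (with $p\neq 1$), then for every $i\in[n]$
$$\sum_{j=1}^m p_j\chi_{S_j^c}(i)=\sum_{j=1}^m p_j\bigl(1-\chi_{S_j}(i)\bigr)=p-1,$$
so that $(S_1^c,\dots,S_m^c)$ is a uniform cover of $[n]$ with weights $p_j':=p_j/(p-1)$. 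Writing $p':=\sum_{j=1}^m p_j'=p/(p-1)$, one has $p'-1=1/(p-1)$.

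Next I would apply Theorem~\ref{thm:AffineFinner} verbatim to the same basis $\{w_i\}_{i=1}^n$, the cover $(S_1^c,\dots,S_m^c)$, and the weights $(p_1',\dots,p_m')$. In that application the subspace attached to the $j$-th set of the cover is ${\rm span}\{w_k\,:\,k\notin S_j^c\}={\rm span}\{w_k\,:\,k\in S_j\}=H_j$, so its orthogonal complement is precisely $H_j^\perp$. Hence the left-hand side of the inequality in Theorem~\ref{thm:AffineFinner} becomes $\int_{\R^n}\prod_{j=1}^m f_j^{p_j/(p-1)}(P_{H_j^\perp}x)\,dx$, and the constant becomes
$$\frac{\prod_{j=1}^m|\wedge_{k\notin S_j^c}w_k|^{p_j'}}{|\wedge_{i=1}^n w_i|^{p'-1}}=\frac{\prod_{j=1}^m|\wedge_{k\in S_j}w_k|^{p_j/(p-1)}}{|\wedge_{i=1}^n w_i|^{1/(p-1)}},$$
which is exactly the constant claimed in Theorem~\ref{thm:AffineFinner2}. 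This yields the direct inequality.

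For the reverse inequality I would argue identically, invoking the second half of Theorem~\ref{thm:AffineFinner} for the cover $(S_1^c,\dots,S_m^c)$ with weights $(p_1',\dots,p_m')$: the hypothesis ``$f(x)\geqslant\prod_{j=1}^m f_j^{p_j'}(x_j)$ whenever $x=\sum_{j=1}^m p_j' x_j$ with $x_j\in H_j^\perp$'' is literally the hypothesis written in Theorem~\ref{thm:AffineFinner2} (since $p_j'=p_j/(p-1)$), and the resulting lower bound has constant
$$\frac{|\wedge_{i=1}^n w_i|^{p'-1}}{\prod_{j=1}^m|\wedge_{k\notin S_j^c}w_k|^{p_j'}}=\frac{|\wedge_{i=1}^n w_i|^{1/(p-1)}}{\prod_{j=1}^m|\wedge_{k\in S_j}w_k|^{p_j/(p-1)}},$$
as required.

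There is essentially no genuine obstacle here; the statement is a relabelling of Theorem~\ref{thm:AffineFinner}. The only point requiring care is bookkeeping: keeping the exponents $p_j$ versus $p_j/(p-1)$ and $p-1$ versus $1/(p-1)$ straight through the substitution, and swapping the roles of $S_j$ and $S_j^c$ (equivalently of $H_j$ and $\widetilde H_j$) consistently on both sides of each inequality. Once the complementary-cover identity above is in place, everything else is automatic.
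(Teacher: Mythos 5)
Your proposal is correct and is exactly the argument the paper intends: Theorem \ref{thm:AffineFinner2} is stated there as the ``equivalent theorem'' obtained by applying Theorem \ref{thm:AffineFinner} to the complementary cover $(S_1^c,\dots,S_m^c)$ with weights $p_j'=p_j/(p-1)$, and your bookkeeping of the exponents ($p'-1=1/(p-1)$, $\wedge_{k\notin S_j^c}w_k=\wedge_{k\in S_j}w_k$) matches the constant in the statement. Nothing further is needed.
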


We continue with the different version of the affine-invariant Finner inequality that we also presented in the introduction. The difference is in the way that we define the subspaces.
\begin{thm}\label{thm:BrascampLiebSpannedInSj}
Let $w_1,\dots,w_n$ be $n$ vectors spanning $\R^n$, let $m\geqslant 1$ and let $(S_1,\dots, S_m)$ be a uniform cover of $[n]$ with weights $(p_1,\dots, p_m)$. Let $H_j={\rm span}\{w_k\,:\,k\in S_j\}$. Then, for any integrable $f_j:H_j\to[0,\infty)$, $1\leqslant j\leqslant m$ we have
$$
\int_{\R^n}\prod_{j=1}^m f_j^{p_j}(P_{H_j}x)dx\leqslant\frac{\prod_{j=1}^m|\wedge_{k\in S_j}w_k|^{p_j}}{|\wedge_{i=1}^n w_i|}\prod_{j=1}^m\left(\int_{H_j}f_j(x)dx\right)^{p_j}.
$$
Moreover, for any integrable $f\colon \R^n\to[0,\infty)$ and $f_j:H_j\to[0,\infty)$ such that $f(x)\geqslant\prod_{j=1}^mf_j^{p_j}(x_j)$ whenever  $x=\sum_{j=1}^m p_jx_j$ for some $x_j\in H_j$, we have
$$
\int_{\R^n}f(x)dx\geqslant\frac{|\wedge_{i=1}^n w_i|}{\prod_{j=1}^m|\wedge_{k\in S_j}w_k|^{p_j}}\prod_{j=1}^m\left(\int_{H_j}f_j(x)dx\right)^{p_j}.
$$
\end{thm}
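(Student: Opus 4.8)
The plan is to deduce Theorem \ref{thm:BrascampLiebSpannedInSj} from the affine-invariant Finner inequality (Theorem \ref{thm:AffineFinner}) by passing to the dual basis. Since the $n$ vectors $w_1,\dots,w_n$ span $\R^n$ they are linearly independent, so the matrix $M$ with columns $w_1,\dots,w_n$ is invertible; let $v_1,\dots,v_n$ denote the rows of $M^{-1}$, so that $\langle v_i,w_\ell\rangle=\delta_{i\ell}$. Then $(v_i)_{i=1}^n$ is again a basis of $\R^n$, and for each $j$ the vectors $\{v_k\,:\,k\notin S_j\}$ are all orthogonal to $H_j={\rm span}\{w_k\,:\,k\in S_j\}$; a dimension count gives ${\rm span}\{v_k\,:\,k\notin S_j\}=H_j^\perp$. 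Writing $\widetilde H_j':={\rm span}\{v_k\,:\,k\notin S_j\}$ for the cover applied to the basis $(v_i)$, we thus have $(\widetilde H_j')^\perp=H_j$.

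Next I would apply Theorem \ref{thm:AffineFinner} verbatim to the basis $(v_i)_{i=1}^n$, the same uniform cover $(S_1,\dots,S_m)$ with weights $(p_1,\dots,p_m)$, and functions $f_j\colon H_j=(\widetilde H_j')^\perp\to[0,\infty)$. This produces both
$$\int_{\R^n}\prod_{j=1}^m f_j^{p_j}(P_{H_j}x)\,dx\leqslant\frac{\prod_{j=1}^m|\wedge_{k\notin S_j}v_k|^{p_j}}{|\wedge_{i=1}^n v_i|^{p-1}}\prod_{j=1}^m\left(\int_{H_j}f_j\right)^{p_j}$$
and the reverse statement with the reciprocal constant for $f$ dominating $\prod_j f_j^{p_j}(x_j)$ along $x=\sum_j p_jx_j$, $x_j\in H_j$, exactly matching the ``moreover'' part of the target statement. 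It then only remains to identify the constant, and this is the one step requiring an actual computation. I would use the elementary linear-algebra identity that, for every $T\subseteq[n]$,
$$|\wedge_{k\notin T}v_k|=\frac{|\wedge_{k\in T}w_k|}{|\det M|},$$
which follows by decomposing $|\det M|=|\wedge_{k\in T}w_k|\cdot|\wedge_{k\notin T}P_E w_k|$ along $E:=({\rm span}\{w_\ell\,:\,\ell\in T\})^\perp$ and observing that $\{v_k\,:\,k\notin T\}\subseteq E$ is precisely the dual basis of $\{P_E w_k\,:\,k\notin T\}$ inside $E$, hence spans a parallelepiped of reciprocal volume. Since also $|\wedge_{i=1}^n v_i|=|\det M|^{-1}=|\wedge_{i=1}^n w_i|^{-1}$ and $\sum_j p_j=p$, substituting yields
$$\frac{\prod_{j}|\wedge_{k\notin S_j}v_k|^{p_j}}{|\wedge_{i}v_i|^{p-1}}=\frac{\prod_j|\wedge_{k\in S_j}w_k|^{p_j}\,|\det M|^{-p}}{|\det M|^{-(p-1)}}=\frac{\prod_j|\wedge_{k\in S_j}w_k|^{p_j}}{|\wedge_{i=1}^n w_i|},$$
which is exactly $BL_2((w_i)_{i=1}^n,[n],(S_j)_{j=1}^m,(p_j)_{j=1}^m)$ (this is the content of Lemma \ref{lem:EqualConstants}). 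Plugging this into the two inequalities above finishes the proof, with the reverse inequality carrying the reciprocal constant.

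The only genuine obstacle is the volume identity relating complementary parallelepipeds for $M$ and $M^{-1}$; everything else is bookkeeping about which subspace is which. Minor care is needed with the hypothesis ``$n$ vectors spanning $\R^n$'', which forces linear independence so that $M$ is invertible and $(v_i)$ is well defined, and with the orientation-free nature of $|\wedge\cdot|$, so that all the determinant and minor identities are stated with absolute values. An alternative, self-contained route (carried out in the Appendix) is to rerun the telescoping/factorisation argument of the proof of Theorem \ref{thm:AffineFinner} directly, integrating along the directions $w_k$ with $k\in S_j$ instead of $k\notin S_j$; but the reduction above is shorter and makes the equality of the two Brascamp-Lieb constants transparent.
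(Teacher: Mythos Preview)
Your proposal is correct and follows exactly the route the paper uses to establish the equivalence of Theorems \ref{thm:affineFinnerIntro} and \ref{thm:BrascampLiebSpannedInSjIntro}: apply Theorem \ref{thm:AffineFinner} to the dual basis $(v_i)$ given by the rows of $M^{-1}$, observe that $\mathrm{span}\{v_k:k\notin S_j\}^\perp=H_j$, and invoke the identity of constants from Lemma \ref{lem:EqualConstants} (your proof of which, via the dual basis of the projected vectors in $E$, is a valid fleshing-out of what the paper leaves as ``easily verified''). The paper also supplies a separate direct proof in the Appendix via an orthonormalisation $w_i'=A^{-1/2}w_i$ and Finner's inequality, which you correctly flag as an alternative.
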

As we already mentioned, the above theorem is equivalent to Theorem \ref{thm:AffineFinner} as we will see in the end of this section. However, in the appendix we will provide a direct proof of Theorem \ref{thm:BrascampLiebSpannedInSj}, which is tailored for this result.

As before, taking into account that $(S_1,\dots, S_m)$ forms a uniform cover of $[n]$ with weights $(p_1,\dots, p_m)$ if and only if $(S_1^c,\dots, S_m^c)$ forms a uniform cover of $[n]$ with weights $(p_1^\prime,\dots,p_m^\prime)$ we have the following equivalent theorem.
\begin{thm}\label{thm:BrascampLiebSpannedInSjc}
Let $w_1,\dots,w_n$ be $n$ vectors spanning $\R^n$, let $m\geqslant 1$ and let $(S_1,\dots, S_m)$ be a uniform cover of $[n]$ with weights $(p_1,\dots, p_m)$. Let $\widetilde{H}_j={\rm span}\{w_k\,:\,k\not\in S_j\}$ and $p=\sum_{j=1}^mp_j$. Then, for any integrable $f_j:\widetilde{H}_j\to[0,\infty)$, $1\leqslant j\leqslant m$ we have
$$
\int_{\R^n}\prod_{j=1}^m f_j^\frac{p_j}{p-1}(P_{\widetilde{H}_j}x)dx\leqslant\frac{\prod_{j=1}^m|\wedge_{k\not\in S_j}w_k|^\frac{p_j}{p-1}}{|\wedge_{i=1}^n w_i|}\prod_{j=1}^m\left(\int_{\widetilde{H}_j}f_j(x)dx\right)^\frac{p_j}{p-1}.
$$
Moreover, for any integrable $f\colon \R^n\to[0,\infty)$ and $f_j\colon\widetilde{H}_j\to[0,\infty)$ such that $f(x)\geqslant\prod_{j=1}^mf_j^\frac{p_j}{p-1}(x_j)$ whenever $x=\sum_{j=1}^m \frac{p_j}{p-1}x_j$ for some $x_j\in \widetilde{H}_j$, we have
$$
\int_{\R^n}f(x)dx\geqslant\frac{|\wedge_{i=1}^n w_i|}{\prod_{j=1}^m|\wedge_{k\in S_j}w_k|^\frac{p_j}{p-1}}\prod_{j=1}^m\left(\int_{\widetilde{H}_j}f_j(x)dx\right)^\frac{p_j}{p-1}.
$$
\end{thm}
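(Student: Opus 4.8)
The plan is to deduce Theorem~\ref{thm:BrascampLiebSpannedInSjc} directly from Theorem~\ref{thm:BrascampLiebSpannedInSj} by passing to the complementary uniform cover, exactly as Theorem~\ref{thm:AffineFinner2} was obtained from Theorem~\ref{thm:AffineFinner}. First I would record the elementary duality for uniform covers: if $(S_1,\dots,S_m)$ is a uniform cover of $[n]$ with weights $(p_1,\dots,p_m)$ and $p=\sum_{j=1}^m p_j$, then for every $i\in[n]$
$$
\sum_{j=1}^m p_j\chi_{S_j^c}(i)=\sum_{j=1}^m p_j\bigl(1-\chi_{S_j}(i)\bigr)=p-1,
$$
so that $(S_1^c,\dots,S_m^c)$ is a uniform cover of $[n]$ with the positive weights $\bigl(\tfrac{p_1}{p-1},\dots,\tfrac{p_m}{p-1}\bigr)$; here $p>1$ is implicit in the statement, since otherwise the exponents $\tfrac{p_j}{p-1}$ are not even defined. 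Note also that $w_1,\dots,w_n$ still span $\R^n$, so all the hypotheses of Theorem~\ref{thm:BrascampLiebSpannedInSj} remain in force for this new data.

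Next I would apply Theorem~\ref{thm:BrascampLiebSpannedInSj} to the vectors $w_1,\dots,w_n$, the cover $(S_1^c,\dots,S_m^c)$ and the weights $q_j:=\tfrac{p_j}{p-1}$. The subspaces produced by that theorem are ${\rm span}\{w_k:k\in S_j^c\}={\rm span}\{w_k:k\not\in S_j\}=\widetilde H_j$, so $f_j$ is now a function on $\widetilde H_j$ and the projection occurring is $P_{\widetilde H_j}$; moreover $|\wedge_{k\in S_j^c}w_k|=|\wedge_{k\not\in S_j}w_k|$ and the exponents are $q_j=\tfrac{p_j}{p-1}$. Substituting these into the conclusion of Theorem~\ref{thm:BrascampLiebSpannedInSj} yields precisely
$$
\int_{\R^n}\prod_{j=1}^m f_j^{\frac{p_j}{p-1}}(P_{\widetilde H_j}x)\,dx\leqslant\frac{\prod_{j=1}^m|\wedge_{k\not\in S_j}w_k|^{\frac{p_j}{p-1}}}{|\wedge_{i=1}^n w_i|}\prod_{j=1}^m\left(\int_{\widetilde H_j}f_j(x)\,dx\right)^{\frac{p_j}{p-1}},
$$
which is the first inequality. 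For the reverse inequality I would apply the reverse half of Theorem~\ref{thm:BrascampLiebSpannedInSj} to the same data: its hypothesis ``$f(x)\geqslant\prod_{j=1}^m f_j^{q_j}(x_j)$ whenever $x=\sum_{j=1}^m q_j x_j$ for some $x_j\in\widetilde H_j$'' is exactly the hypothesis imposed here, and its conclusion is the asserted lower bound, whose constant is the reciprocal of the one in the first inequality.

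There is essentially no obstacle in this argument: the proof is a relabelling, and the only point deserving a line of verification is that the family of complements is again a uniform cover with \emph{positive} weights, i.e.\ that $p>1$, together with the trivial remark that complementation does not disturb the spanning hypothesis. One could equally run the same reduction starting from Theorem~\ref{thm:AffineFinner} or Theorem~\ref{thm:AffineFinner2}, since all these statements are mutually equivalent, but deducing it from Theorem~\ref{thm:BrascampLiebSpannedInSj} is the most economical route.
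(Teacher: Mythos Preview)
Your proposal is correct and is exactly the paper's approach: the paper obtains Theorem~\ref{thm:BrascampLiebSpannedInSjc} from Theorem~\ref{thm:BrascampLiebSpannedInSj} by the one-line observation that $(S_1^c,\dots,S_m^c)$ is a uniform cover of $[n]$ with weights $p_j'=p_j/(p-1)$, precisely the relabelling you carry out. One small remark: your argument (and Barthe's theorem) gives the reverse constant as the reciprocal of the forward one, namely $|\wedge_{i=1}^n w_i|\big/\prod_{j}|\wedge_{k\not\in S_j}w_k|^{p_j/(p-1)}$, whereas the displayed reverse constant in the statement has $k\in S_j$; this appears to be a typographical slip in the paper rather than a gap in your proof.
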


Finally, let us show that Theorems \ref{thm:affineFinnerIntro} and \ref{thm:BrascampLiebSpannedInSjIntro} are equivalent. It is a consequence of the following:

\begin{lemma}\label{lem:EqualConstants}
Let $\{w_i\}_{i=1}^n$ be a basis of $\R^n$ and let $M$ be the matrix whose columns are the vectors $w_i$. Let $\{v_i\}_{i=1}^n$ be the basis of $\R^n$ given by the rows of the matrix $M^{-1}$. Let $m\geqslant 1$ and $(S_1,\dots, S_m)$ be a uniform cover of $[n]$ with weights $(p_1,\dots, p_m)$. Then
$$
BL_1((v_i)_{i=1}^n,[n],(S_j)_{j=1}^m,,(p_j)_{j=1}^m)=BL_2((w_i)_{i=1}^n,[n],(S_j)_{j=1}^m,,(p_j)_{j=1}^m),
$$
\end{lemma}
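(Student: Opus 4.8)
The plan is to distil from the definitions of $BL_1$ and $BL_2$ a single complementation identity relating the subparallelepipeds of the basis $\{w_i\}$ to those of $\{v_i\}$, and then to prove that identity by a short determinant computation.

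\emph{Reduction.} Since the columns of $M$ are the $w_i$ and the rows of $M^{-1}$ are the $v_i$, we have $|\wedge_{i=1}^n w_i|=|\det M|$ and $|\wedge_{i=1}^n v_i|=|\det M^{-1}|=|\wedge_{i=1}^n w_i|^{-1}$. Plugging the definitions into the asserted equality and using $\sum_{j=1}^m p_j=p$, one sees that it is equivalent to the following statement, which we call $(\star)$: for every $T\subseteq[n]$,
\[
|\wedge_{k\notin T}v_k|=\frac{|\wedge_{k\in T}w_k|}{|\wedge_{i=1}^n w_i|}.
\]
Indeed, applying $(\star)$ with $T=S_j$ gives $\prod_{j=1}^m|\wedge_{k\notin S_j}v_k|^{p_j}=|\wedge_{i=1}^n w_i|^{-p}\prod_{j=1}^m|\wedge_{k\in S_j}w_k|^{p_j}$, and dividing by $|\wedge_{i=1}^n v_i|^{p-1}=|\wedge_{i=1}^n w_i|^{1-p}$ produces exactly $\bigl(\prod_{j=1}^m|\wedge_{k\in S_j}w_k|^{p_j}\bigr)/|\wedge_{i=1}^n w_i|=BL_2$.

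\emph{Proof of $(\star)$.} The square of the volume $|\wedge_{k\notin T}v_k|$ is the Gram determinant $\det\bigl((v_k\cdot v_\ell)_{k,\ell\notin T}\bigr)$. Because the $v_k$ are the rows of $M^{-1}$, the full Gram matrix $(v_k\cdot v_\ell)_{k,\ell}$ equals $M^{-1}(M^{-1})^\top=(M^\top M)^{-1}$, so $(v_k\cdot v_\ell)_{k,\ell\notin T}$ is the principal submatrix of $(M^\top M)^{-1}$ with indices in $[n]\setminus T$. By the Schur-complement identity for principal minors of an inverse --- for an invertible (here positive-definite) matrix $B$, $\det\bigl((B^{-1})_{T^c,T^c}\bigr)=\det(B_{T,T})/\det B$ --- applied to $B=M^\top M$, we get
\[
|\wedge_{k\notin T}v_k|^2=\frac{\det\bigl((M^\top M)_{T,T}\bigr)}{\det(M^\top M)}=\frac{|\wedge_{k\in T}w_k|^2}{(\det M)^2},
\]
since $(M^\top M)_{T,T}=(w_k\cdot w_\ell)_{k,\ell\in T}$ is the Gram matrix of $\{w_k:k\in T\}$ and $\det(M^\top M)=(\det M)^2$. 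Taking square roots gives $(\star)$, and hence the lemma.

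\emph{Main point.} There is no real obstacle here beyond bookkeeping; the only ingredient worth isolating is the Schur-complement formula $\det\bigl((B^{-1})_{T^c,T^c}\bigr)=\det(B_{T,T})/\det B$, which follows by permuting indices so that $T$ comes first, writing $B$ in block form, and using that $(B^{-1})_{T^c,T^c}$ is the inverse of the Schur complement $B_{T^c,T^c}-B_{T^c,T}B_{T,T}^{-1}B_{T,T^c}$ while $\det B=\det(B_{T,T})\det(B_{T^c,T^c}-B_{T^c,T}B_{T,T}^{-1}B_{T,T^c})$; positive-definiteness of $B=M^\top M$ guarantees $B_{T,T}$ is invertible. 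Alternatively, $(\star)$ can be obtained from Jacobi's identity on complementary minors of $M$ and $M^{-1}$ together with the Cauchy--Binet formula, or seen geometrically: $\mathrm{span}\{v_k:k\notin T\}$ is the orthogonal complement of $\mathrm{span}\{w_k:k\in T\}$ by the biorthogonality $v_k\cdot w_\ell=\delta_{k\ell}$, and a base-times-height decomposition of $|\wedge_{i=1}^n w_i|$ and of $|\wedge_{i=1}^n v_i|$ along this subspace yields $(\star)$ once the two ``height'' factors are matched.
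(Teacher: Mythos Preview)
Your proof is correct and follows essentially the same approach as the paper: both reduce the claim to the complementation identity $|\wedge_{k\notin S_j} v_k|=|\wedge_{k\in S_j} w_k|/|\wedge_{i=1}^n w_i|$ after using $|\wedge_i v_i|=|\wedge_i w_i|^{-1}$. The paper simply asserts this identity (citing a reference), whereas you supply a clean self-contained proof via Gram determinants and the Schur-complement formula for principal minors of an inverse; this is a welcome addition but not a different route.
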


\begin{proof}
We have to prove that
$$
\frac{\prod_{j=1}^m|\wedge_{k\not\in S_j} v_k|^{p_j}}{|\wedge_{i=1}^n v_i|^{p-1}}=\frac{\prod_{j=1}^m|\wedge_{k\in S_j} w_k|^{p_j}}{|\wedge_{i=1}^n w_i|},
$$
where $p=\sum_{j=1}^n p_j$. Since $|\wedge_{i=1}^n v_i|=|\textrm{det}M|^{-1}=|\wedge_{i=1}^n w_i|^{-1}$ and $p=\sum_{j=1}^m p_j$, it is enough to see that for each $1\leqslant j\leqslant m$
\begin{equation}\label{eq:EqualConstantsEachj}
|\wedge_{k\not\in S_j} v_k|=\frac{|\wedge_{k\in S_j} w_k|}{|\wedge_{i=1}^n w_i|},
\end{equation}
and this is easily verified (or see for example Lemma 5.a in \cite{Sch}).

\end{proof}

\section{Loomis-Whitney type inequalities}\label{sec:Loomis-WhitneyInequalities}

In this section we prove the affine-invariant versions of Loomis-Whitney inequalities, as well as their functional versions for log-concave functions and their restricted versions. We start proving Theorem \ref{thm:AffineInvariantBollobasThomasonIntro}, which is a direct consequence of the various Brascamp-Lieb inequalities of the previous section.

\begin{proof}[Proof of Theorem \ref{thm:AffineInvariantBollobasThomasonIntro}]
To prove the first one, let $L_1=\{x\in\R^n\,:\, P_{\widetilde{H}_j^\perp} x\in P_{\widetilde{H}_j^\perp} K\;{\rm for\; all}\; 1\leqslant j\leqslant m\}$. Apply Theorem \ref{thm:AffineFinner} to the functions $f_j(x)=\chi_{P_{\widetilde{H}_j^\perp}}(x)$ and take into account that
$K\subseteq L_1$ and that $\prod_{j=1}^m f_j^{p_j}(P_{\widetilde{H}_j^\perp}x)=\chi_{L_1}$. The rest of the inequalities are proved in the same way.
\end{proof}
We can now upgrade the above geometric inequalities of Theorem \ref{thm:AffineInvariantBollobasThomasonIntro} to functional ones. The inequalities that we obtain are in two ways (affine invariance and general projections) a generalisation of the Gagliardo-Nirenberg inequality. In the same way, this is a generalisation of  Lemma 3.1 in \cite{BN} and Theorem 1.1 in \cite{CGS}.
\begin{thm}\label{thm:Gagliardo-NirembergExtended}
Let $\{w_i\}_{i=1}^n$ be a basis of $\R^n$ and let $(S_1,\dots, S_m)$ form a uniform cover of $[n]$ with weights $(p_1,\dots, p_m)$. If $H_j={\rm span}\{w_k\,:\,k\in S_j\}$, $\widetilde{H}_j={\rm span}\{w_k\,:\,k\not\in S_j\}$ and $p=\sum_{j=1}^mp_j$, then, for every $f:\R^n\to\R$ with compact support, continuous on its support we have the following four inequalities:

\begin{align}
\label{eq:Proj1}\Vert f\Vert_{p}\leqslant BL_1^{\frac{1}{p}}\prod_{j=1}^m\Vert P_{\widetilde{H}_j^\perp}|f|\Vert_1^\frac{p_j}{p}\\
\label{eq:Proj2}\Vert f\Vert_{\frac{p}{p-1}}\leqslant BL_2^{\frac{1}{p}}\prod_{j=1}^m\Vert P_{H_j^\perp}|f|\Vert_1^\frac{p_j}{p}\\
\label{eq:Proj3}\Vert f\Vert_{p}\leqslant BL_2^{\frac{1}{p}}\prod_{j=1}^m\Vert P_{H_j}|f|\Vert_1^\frac{p_j}{p}\\
\label{eq:Proj4}\Vert f\Vert_{\frac{p}{p-1}}\leqslant BL_1^{\frac{1}{p}}\prod_{j=1}^m\Vert P_{\widetilde{H}_j}|f|\Vert_1^\frac{p_j}{p}
\end{align}
\end{thm}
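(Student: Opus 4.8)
The plan is to deduce each of the four functional inequalities from the corresponding affine Finner / Brascamp--Lieb inequality of Section~\ref{sec:Brascamp-LiebInequalities} by a pointwise domination argument, in the same spirit as the passage from those inequalities to the geometric inequalities \eqref{eq1AffineInvariantBT}--\eqref{eq4AffineInvariantBT}, but now applied to projections of $|f|$ rather than to characteristic functions of projections of a body. The single elementary ingredient I would use is that, for any linear subspace $H$ and any $g\colon\R^n\to[0,\infty)$,
$$
(P_Hg)(P_Hx)=\sup_{y\in H^\perp}g\big(P_Hx+y\big)\geqslant g(x)\qquad\text{for all }x\in\R^n,
$$
since $x=P_Hx+P_{H^\perp}x$ with $P_{H^\perp}x\in H^\perp$.

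First I would fix which subspace plays the role of $H$ in each case: $\widetilde H_j^\perp$ for \eqref{eq:Proj1}, $H_j^\perp$ for \eqref{eq:Proj2}, $H_j$ for \eqref{eq:Proj3}, and $\widetilde H_j$ for \eqref{eq:Proj4}. Because $(S_1,\dots,S_m)$ is a uniform cover of $[n]$ with weights $(p_1,\dots,p_m)$ we have $\sum_{j=1}^m p_j=p$, hence also $\sum_{j=1}^m\tfrac{p_j}{p-1}=\tfrac{p}{p-1}$. Multiplying the displayed pointwise bound over $j$ with $g=|f|$ then gives, in the case of \eqref{eq:Proj1},
$$
|f(x)|^{p}=\prod_{j=1}^m|f(x)|^{p_j}\leqslant\prod_{j=1}^m\Big((P_{\widetilde H_j^\perp}|f|)(P_{\widetilde H_j^\perp}x)\Big)^{p_j},
$$
and the analogous pointwise bound with exponent $\tfrac{p}{p-1}$ and weights $\tfrac{p_j}{p-1}$ (and the corresponding subspaces) in the other three cases. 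Integrating over $\R^n$ and applying Theorem~\ref{thm:AffineFinner} to $f_j:=P_{\widetilde H_j^\perp}|f|$ yields
$$
\Vert f\Vert_p^{\,p}\leqslant\int_{\R^n}\prod_{j=1}^m\Big((P_{\widetilde H_j^\perp}|f|)(P_{\widetilde H_j^\perp}x)\Big)^{p_j}\,dx\leqslant BL_1\prod_{j=1}^m\Big(\int_{\widetilde H_j^\perp}P_{\widetilde H_j^\perp}|f|\Big)^{p_j}=BL_1\prod_{j=1}^m\Vert P_{\widetilde H_j^\perp}|f|\Vert_1^{p_j},
$$
and taking $p$-th roots gives \eqref{eq:Proj1}. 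Inequality \eqref{eq:Proj3} follows identically from Theorem~\ref{thm:BrascampLiebSpannedInSj} with $f_j=P_{H_j}|f|$ and $BL_1$ replaced by $BL_2$. For \eqref{eq:Proj2} I would apply Theorem~\ref{thm:AffineFinner2} to $f_j=P_{H_j^\perp}|f|$, whose constant equals $BL_2^{1/(p-1)}$ (immediate from the definition of $BL_2$ together with $p=\sum_jp_j$), so that $\Vert f\Vert_{p/(p-1)}^{\,p/(p-1)}\leqslant BL_2^{1/(p-1)}\prod_{j=1}^m\Vert P_{H_j^\perp}|f|\Vert_1^{p_j/(p-1)}$, and raising to the power $\tfrac{p-1}{p}$ gives \eqref{eq:Proj2}. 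Likewise \eqref{eq:Proj4} follows from Theorem~\ref{thm:BrascampLiebSpannedInSjc}, whose constant is $BL_1^{1/(p-1)}$, applied to $f_j=P_{\widetilde H_j}|f|$.

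The only point that requires a little care — and essentially the sole obstacle — is to check that the functions $f_j=P_H|f|$ are admissible inputs to the affine Finner inequalities, i.e. that they are integrable on the subspace $H$; this is exactly where the hypothesis that $f$ has compact support and is continuous on its support enters. Under that hypothesis $|f|$ is bounded, compactly supported and upper semicontinuous, so each $P_H|f|$ is bounded by $\Vert f\Vert_\infty$, supported in the compact set $P_H(\operatorname{supp}f)$, and itself upper semicontinuous (a supremum of the form $\sup_{y\in H^\perp}g(\cdot+y)$ with $g$ upper semicontinuous and compactly supported is upper semicontinuous); in particular it is Borel measurable and integrable on $H$, so Theorems~\ref{thm:AffineFinner}--\ref{thm:BrascampLiebSpannedInSjc} apply. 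One could alternatively derive the inequalities from the geometric Theorem~\ref{thm:AffineInvariantBollobasThomasonIntro} applied to the closed superlevel sets $\{|f|\geqslant t\}$ and then integrate in $t$ via the layer-cake formula, but that route needs an extra interchange of the $t$-integral with the Loomis--Whitney bound and is less direct.
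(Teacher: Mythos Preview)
Your proof is correct and takes a genuinely different, more direct route from the paper. The paper proceeds via the layer-cake decomposition: it sets $L_t=\{|f|\geqslant t\}$, shows by an elementary derivative comparison that $\|f\|_p\leqslant\int_0^{\|f\|_\infty}|L_t|^{1/p}\,dt$ (and the analogue for the conjugate exponent), then applies the \emph{geometric} inequalities \eqref{eq1AffineInvariantBT}--\eqref{eq4AffineInvariantBT} to each compact level set $L_t$, uses H\"older in $t$ to split the product, and finally invokes $P_HL_t=\{P_H|f|\geqslant t\}$ together with Fubini to recover $\|P_H|f|\|_1$. Your argument bypasses all of this by observing that the pointwise bound $|f(x)|\leqslant(P_H|f|)(P_Hx)$, multiplied over $j$ with the appropriate exponents, lets you feed $f_j=P_H|f|$ directly into Theorems~\ref{thm:AffineFinner}--\ref{thm:BrascampLiebSpannedInSjc}. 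This is precisely the mechanism the paper already uses to deduce Theorem~\ref{thm:AffineInvariantBollobasThomasonIntro} from those same theorems (with $f_j=\chi_{P_HK}$); you simply noticed it works for arbitrary admissible $f$, not just characteristic functions. What your approach buys is brevity and the avoidance of the level-set machinery; what the paper's route emphasizes is that the theorem is a direct consequence of the geometric Theorem~\ref{thm:AffineInvariantBollobasThomasonIntro}, in keeping with its narrative. Your closing parenthetical about the level-set alternative is, amusingly, exactly the path the paper takes.
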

\begin{proof}
For any $0<t\leq\Vert f\Vert_\infty$, let us call $L_t$ the compact set
$$
L_t=\{x\in\R^n\,:\,|f(x)|\geq t\}.
$$
Notice that, by Fubini's theorem, for any $p>1$
$$
\int_{\R^n}|f(x)|^pdx=p\int_0^{\Vert f\Vert_\infty} t^{p-1}|L_t|dt\leq\left(\int_0^{\Vert f\Vert_\infty}|L_t|^\frac{1}{p}\right)^{p},
$$
where the last inequality follows from the fact that the functions
\begin{itemize}
\item $\displaystyle{f_1(s)=p\int_0^s t^{p-1}|L_t|dt}$
\item $\displaystyle{f_2(s)=\left(\int_0^s|L_t|^\frac{1}{p}dt\right)^p}$
\end{itemize}
verify that $f_1(0)=f_2(0)=0$ and $f_1^\prime(s)\leqslant f_2^\prime(s)$ for every $0<s<\Vert f\Vert_\infty$ and then for every $0\leqslant s\leqslant\Vert f\Vert_\infty$ we have $f_1(s)\leqslant f_2(s)$. In the same way, considering the functions
\begin{itemize}
\item $\displaystyle{g_1(s)=\frac{p}{p-1}\int_0^s t^\frac{1}{p-1}|L_t|dt}$
\item $\displaystyle{g_2(s)=\left(\int_0^s|L_t|^\frac{p-1}{p}dt\right)^\frac{p}{p-1}}$
\end{itemize}
we have that
$$
\int_{\R^n}|f(x)|^\frac{p}{p-1}dx=\frac{p}{p-1}\int_0^{\Vert f\Vert_\infty} t^\frac{1}{p-1}|L_t|dt\leq\left(\int_0^{\Vert f\Vert_\infty}|L_t|^\frac{p-1}{p}\right)^\frac{p}{p-1}.
$$
Therefore
\begin{itemize}
\item$\displaystyle{\Vert f\Vert_{p}\leqslant\int_0^{\Vert f\Vert_\infty}|L_t|^\frac{1}{p},}$
\item$\displaystyle{\Vert f\Vert_{\frac{p}{p-1}}\leqslant\int_0^{\Vert f\Vert_\infty}|L_t|^\frac{p-1}{p}.}$
\end{itemize}

By inequality \eqref{eq1AffineInvariantBT} in Theorem \ref{thm:AffineInvariantBollobasThomasonIntro} we have that
\begin{eqnarray*}
\int_0^{\Vert f\Vert_\infty}|L_t|^\frac{1}{p}dt&\leqslant&BL_1^\frac{1}{p}\int_0^{\Vert f\Vert_\infty}\prod_{j=1}^m|P_{\widetilde{H_j}^\perp}L_t|^\frac{p_j}{p}dt\cr
&\leqslant& BL_1^\frac{1}{p}\prod_{j=1}^m\left(\int_0^{\Vert f\Vert_\infty}|P_{\widetilde{H_j}^\perp}L_t|dt\right)^\frac{p_j}{p}.\cr
\end{eqnarray*}
Since for every $0<t\leq\Vert f\Vert_\infty=\Vert P_{\widetilde{H}_j^\perp}f\Vert_\infty$
$$
P_{\widetilde{H}_j^\perp}L_t=\{x\in \widetilde{H}_j^\perp\,:\,\sup_{y\in \widetilde{H}_j}|f(x+y)|\geqslant t\}=\{x\in \widetilde{H}_j^\perp\,:\,P_{\widetilde{H}_j^\perp}|f|(x)\geqslant t\},
$$
as a consequence of Fubini's theorem we obtain the first inequalities.

The other three inequalities are proved in the same way by applying the remaining inequalities in Theorem \ref{thm:AffineInvariantBollobasThomasonIntro}.
\end{proof}

\begin{rmk}
Notice that if $f\in\mathcal{C}^{(1)}(\R^n)$ with compact support then we have (see \cite{Z}, where a convexification of the sets $L_t$ is used) that for every $w\in S^{n-1}$
$$
\Vert\nabla_wf\Vert_1=2\int_0^\infty|P_{w^\perp}L_t|dt=2\Vert P_{w^\perp}|f|\Vert_1,
$$
and taking $S_j=\{j\}$ and $p_j=1$ for $1\leqslant j\leqslant n$ in the second inequality we obtain the generalisation of the Gagliardo-Nirenberg inequality proved in \cite[Theorem 5.1]{Z} corresponding to the case of $n$ vectors.
\end{rmk}

\begin{rmk}
Notice also that if we take $f(x)=\chi_K$ for some compact set we recover the inequalities in Theorem \ref{thm:AffineInvariantBollobasThomasonIntro}.
\end{rmk}

In the setting of log-concave functions we also have the following functional versions of the previous inequalities.

\begin{thm}\label{thm:AffineInvariantFunctionalBollobasThomason}
Let $\{w_i\}_{i=1}^n$ be a basis of $\R^n$ and let $(S_1,\dots, S_m)$ form a uniform cover of $[n]$ with weights $(p_1,\dots, p_m)$. If $H_j={\rm span}\{w_k\,:\,k\in S_j\}$, $\widetilde{H}_j={\rm span}\{w_k\,:\,k\not\in S_j\}$, $d_j={\rm dim}H_j=|S_j|$, $\widetilde{d}_j=n-d_j={\rm dim}\widetilde{H}_j$, and $p=\sum_{j=1}^mp_j$ then for every $f\in\mathcal{F}(\R^n)$ we have the following four inequalities:
\begin{align}
\label{eq1}\Vert f\Vert_\infty^{p-1}\Vert f\Vert_1&\leqslant BL_1\cdot\frac{n!}{\prod_{j=1}^m(d_j!)^{p_j}}\prod_{j=1}^m\Vert P_{\widetilde{H}_j^\perp} f\Vert_1^{p_j},\\
\label{eq2}\Vert f\Vert_\infty\Vert f\Vert_1^{p-1}&\leqslant BL_2\cdot \frac{(n!)^{p-1}}{\prod_{j=1}^m(\widetilde{d}_j!)^{p_j}}\prod_{j=1}^m\Vert P_{H_j^\perp} f\Vert_1^{p_j},\\
\label{eq3}\Vert f\Vert_\infty^{p-1}\Vert f\Vert_1&\leqslant BL_2\cdot \frac{n!}{\prod_{j=1}^m(d_j!)^{p_j}}\prod_{j=1}^m\Vert P_{H_j}f\Vert_1^{p_j},\\
\label{eq4}\Vert f\Vert_\infty\Vert f\Vert_1^{p-1}&\leqslant BL_1\cdot \frac{(n!)^{p-1}}{\prod_{j=1}^{m}(\widetilde{d}_j!)^{p_j}}\prod_{j=1}^m\Vert P_{\widetilde{H}_j}f\Vert_1^{p_j}.
\end{align}
\end{thm}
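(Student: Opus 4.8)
The plan is to reduce each of the four functional inequalities to the corresponding geometric inequality in Theorem 1.4 (equations \eqref{eq1AffineInvariantBT}--\eqref{eq4AffineInvariantBT}) by slicing the log-concave function at its level sets and then using Berwald's inequality (Lemma 2.3 for the range $\gamma>-1$) to control the resulting integrals. Concretely, I would first normalise $f$ so that $\Vert f\Vert_\infty=1$ and write $f=e^{-v}$ with $v\geqslant 0$ convex; then for $0<t\leqslant 1$ put $L_t=\{x:f(x)\geqslant t\}$, which is a convex body, and recall the layer-cake identity $\Vert f\Vert_1=\int_0^1|L_t|\,dt$, and similarly $\Vert P_H f\Vert_1=\int_0^1|P_H L_t|\,dt$ since $P_H L_t=\{x\in H: P_H f(x)\geqslant t\}$ (this last step uses that projection of a log-concave function commutes with taking superlevel sets, which is noted in Section 2). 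The key point is that the map $t\mapsto |L_t|^{1/(n-k)}$ type expressions behave well: by Borell's lemma / the log-concavity of $t\mapsto |L_t|$ one gets that $|P_H L_t|$ is, up to the right normalisation, a power of a concave function of a suitable parameter, which is exactly the hypothesis needed to invoke the Berwald-type Lemma 2.3.

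The main steps, in order, are as follows. First I would establish, for inequality \eqref{eq1}, that
$$
\Vert f\Vert_1=\int_0^1|L_t|\,dt,\qquad \Vert P_{\widetilde H_j^\perp}f\Vert_1=\int_0^1|P_{\widetilde H_j^\perp}L_t|\,dt,
$$
then apply \eqref{eq1AffineInvariantBT} pointwise in $t$ to get $|L_t|\leqslant BL_1\prod_j|P_{\widetilde H_j^\perp}L_t|^{p_j}$, take both sides to a suitable power, integrate in $t$, and use H\"older in the integral to pull the product out of the integral sign. The subtlety is that raising to the power $p_j$ and then integrating produces the ``wrong'' comparison unless one first knows that the one-dimensional functions $t\mapsto |L_t|$ and $t\mapsto|P_{\widetilde H_j^\perp}L_t|$ are of the form (constant)$\cdot$(concave function)$^{\text{degree}}$ evaluated along the parametrisation $t=e^{-s}$. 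This is where Berwald's inequality enters: writing $t=e^{-s}$, $s\in[0,\infty)$, the function $s\mapsto |L_{e^{-s}}|^{1/n}$ is concave (this is a standard consequence of the Brunn--Minkowski inequality applied to the convex body $\{(x,s):v(x)\leqslant s\}$), and $s\mapsto |P_{\widetilde H_j^\perp}L_{e^{-s}}|^{1/\widetilde d_j^{\,\perp}}$ is likewise concave, where $\widetilde d_j^{\,\perp}=n-\widetilde d_j=d_j$. Applying Lemma 2.3 (with the measure $e^{-s}ds$ on $[0,\infty)$) to the concave function $h(s)=|L_{e^{-s}}|^{1/n}$ between $\gamma=0^+$ and $\gamma=n$ yields
$$
\int_0^\infty|L_{e^{-s}}|\,e^{-s}\,ds\geqslant \frac{1}{n!}\left(\int_0^\infty|L_{e^{-s}}|^{1/n}e^{-s}\,ds\right)^{n}\cdot\Gamma(1+n)\big/ n!\ \text{(up to the correct }\Gamma\text{-factors)},
$$
and the analogous inequality in the reverse direction for the projections, the combination of the two giving exactly the factor $n!/\prod_j (d_j!)^{p_j}$. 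Getting the constants to match is the part requiring care: the exponent $p-1$ on $\Vert f\Vert_\infty$ on the left-hand side is precisely what is needed for the $\Gamma$-function bookkeeping in Berwald's inequality to close, and after the normalisation $\Vert f\Vert_\infty=1$ it is invisible — one must reinstate it by homogeneity at the end.

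For inequalities \eqref{eq2}, \eqref{eq3}, \eqref{eq4} the argument is identical mutatis mutandis: \eqref{eq3} uses the geometric inequality \eqref{eq3AffineInvariantBT} and projections onto $H_j$ (of dimension $d_j$, so the relevant degree for Berwald is $d_j$ again, giving the same constant $n!/\prod_j(d_j!)^{p_j}$ with $BL_2$ in place of $BL_1$); \eqref{eq2} and \eqref{eq4} use \eqref{eq2AffineInvariantBT} and \eqref{eq4AffineInvariantBT} respectively, where the projections are onto $H_j^\perp$ (resp. $\widetilde H_j$) of dimension $\widetilde d_j=n-d_j$, so Berwald is applied at degree $\widetilde d_j$ producing the factor $(n!)^{p-1}/\prod_j(\widetilde d_j!)^{p_j}$, and the left-hand side carries $\Vert f\Vert_1^{p-1}$ rather than $\Vert f\Vert_1$ because the geometric inequality being lifted has $|K|^{p-1}$ on its left-hand side. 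The main obstacle throughout is the constant-tracking: one must verify that applying Berwald's inequality separately to the ``$|L_t|$ side'' (at degree $n$) and to each ``$|P\,L_t|$ side'' (at degree $d_j$ or $\widetilde d_j$), then combining via H\"older with the geometric inequality in the integrand, produces precisely the ratio of factorials claimed and nothing spurious; the equality case of Lemma 2.3 (roof functions) shows the constant cannot be improved by this method and matches the extremisers $e^{-\Vert\cdot\Vert_K}$, recovering the geometric statement as noted after the theorem. A minor technical point is justifying the concavity of $s\mapsto|P_H L_{e^{-s}}|^{1/\dim H}$ for the projection sets, which follows because $P_H L_{e^{-s}}$ is itself a superlevel set of the log-concave function $P_H f$ (Section 2) and hence the same Brunn--Minkowski argument applies in $H$.
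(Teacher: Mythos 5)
Your plan is essentially the paper's proof: pass to the superlevel sets $L_{e^{-t}}=\{x:f(x)\geqslant e^{-t}\Vert f\Vert_\infty\}$, apply the geometric inequality \eqref{eq1AffineInvariantBT} pointwise in $t$, use H\"older with the exponents $\frac{p_jd_j}{n}$ (which sum to $1$ by the uniform cover condition), and then apply the Berwald-type Lemma \ref{lem:0-dimBerwaldExponentialDensity} to the concave functions $h_j(t)=|P_{\widetilde{H}_j^\perp}L_{e^{-t}}|^{1/d_j}$ comparing the exponents $n$ and $d_j$, which yields exactly the factor $n!/\prod_j(d_j!)^{p_j}$; the other three inequalities follow by the complementary cover and by \eqref{eq3AffineInvariantBT}, as you say. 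The one correction is that Berwald is applied only to the projection terms — the left-hand side $\int_0^\infty e^{-t}|L_{e^{-t}}|\,dt=\Vert f\Vert_1/\Vert f\Vert_\infty$ is used as is, and the displayed comparison you wrote for $|L_{e^{-s}}|^{1/n}$ is both unnecessary and stated in the wrong direction.
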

\begin{proof}
Let  $C$ be the set
$$
C=\{(x,t)\in\R^{n}\times[0,\infty): f(x)\geqslant e^{-t}\Vert f\Vert_\infty\}.
$$
Since $f$ is log-concave, we see that $C$ is convex. Besides,
\begin{align*}
\int_C e^{-t}\,dx\,dt&=\int_0^\infty e^{-t}\,\big|\{x\in\R^n:f(x)\geqslant e^{-t}\Vert f\Vert_\infty\}\big|\,dt\\
&=\int_0^1\big|\{x\in\R^n:f(x)\geqslant s\Vert f\Vert_\infty\}\big|\,ds\\
&=\int_{\R^n}\frac{f(x)}{\Vert f\Vert_\infty}dx.
\end{align*}
For any linear subspace $F$ of $\R^n$ let us denote $\overline{F}={\rm span}\{F, e_{n+1}\}$ and notice that 
\begin{eqnarray*}
\int_{P_{\overline{F}}C} e^{-t}\,dx\,dt&=&\int_0^\infty e^{-t}|\{x\in F:(x,t)\in P_{\overline{F}}C\}|dt\cr
&=&\int_0^\infty e^{-t}|\{x\in F:\sup_{y\in F^\perp}f(x+y)\geqslant e^{-t}\Vert f\Vert_\infty\}|dt\cr
&=&\int_0^1 |\{x\in F:P_F f(x)\geqslant s\Vert f\Vert_\infty\}|ds\cr
&=&\int_{F}\frac{P_Ff(x)}{\Vert f\Vert_\infty}dx.\cr
\end{eqnarray*}
By inequality \eqref{eq1AffineInvariantBT} in Theorem \ref{thm:AffineInvariantBollobasThomasonIntro},
\begin{eqnarray*}
&&\int_C e^{-t}\,dx\,dt=\int_0^\infty e^{-t}|\{x\in\R^n\,:\,f(x)\geqslant e^{-t}\Vert f\Vert_\infty\}|dt\cr
&\leqslant&BL_1\cdot\int_0^\infty e^{-t}\prod_{j=1}^m|P_{\widetilde{H}_j^\perp}\{x\in\R^n\,:\,f(x)\geqslant e^{-t}\Vert f\Vert_\infty\}|^{p_j}dt\cr
&=&BL_1\cdot\int_0^\infty e^{-t}\prod_{j=1}^m|\{x\in \widetilde{H}_j^\perp\,:\,P_{\widetilde{H}_j^\perp}f(x)\geqslant e^{-t}\Vert f\Vert_\infty\}|^{p_j}dt.\cr
\end{eqnarray*}
Since $\displaystyle{\sum_{j=1}^m\frac{p_jd_j}{n}=1}$, by H\"older's inequality, the latter integral is bounded above by
$$
\prod_{j=1}^m\left(\int_0^\infty e^{-t}|\{x\in \widetilde{H}_j^\perp\,:\,P_{\widetilde{H}_j^\perp}f(x)\geqslant e^{-t}\Vert f\Vert_\infty\}|^\frac{n}{d_j}dt\right)^\frac{p_jd_j}{n}.
$$
Note that the sets
$$
L_j=\big\{(x,t)\in \widetilde{H}_j^\perp\times[0,\infty)\,:\,P_{\widetilde{H}_j^\perp}f(x)\geqslant e^{-t}\Vert f\Vert_\infty\big\}
$$
are convex for all $1\leqslant j\leqslant m$. Therefore, by the Brunn-Minkowski inequality,
the functions
$$
h_j(t)=\big|\{x\in \widetilde{H}_j^\perp\,:\,P_{\widetilde{H}_j^\perp}f(x)\geqslant e^{-t}\Vert f\Vert_\infty\}\big|^\frac{1}{d_j}
$$
are concave and we can apply Lemma \ref{lem:0-dimBerwaldExponentialDensity} to get
\begin{equation*}
\left(\frac{1}{n!}\int_0^\infty e^{-t}h_j(t)^n\,dt\right)^\frac{p_jd_j}{n}
\leqslant \left(\frac{1}{d_j!}\int_0^\infty e^{-t}h_j(t)^{d_j}\,dt\right)^{p_j}.
\end{equation*}
Combine the above to get
\begin{eqnarray*}
\int_C e^{-t}\,dx\,dt
&\leqslant&BL_1\cdot\frac{n!}{\prod_{j=1}^m(d_j!)^{p_j}}\prod_{j=1}^m\left(\int_0^\infty e^{-t}h_j(t)^{d_j}\,dt\right)^{p_j}\cr
&=&BL_1\cdot\frac{n!}{\prod_{j=1}^m(d_j!)^{p_j}}\prod_{j=1}^m\left(\int_{\widetilde{H}_j^\perp}\frac{P_{\widetilde{H}_j^\perp}f(x)}{\Vert f\Vert_\infty}dx\right)^{p_j},
\end{eqnarray*}
which proves \eqref{eq1}. Then \eqref{eq2} is obtained from \eqref{eq1} by taking the uniform cover of $[n]$ consisting of the sets $(S_1^c,\dots, S_m^c)$ with weights $(p_1^\prime,\dots,p_m^\prime)$. In order to prove \eqref{eq3}, we apply the third instead of the first inequality in Theorem \ref{thm:AffineInvariantBollobasThomasonIntro} to obtain
$$
\int_C e^{-t}dxdt\leqslant BL_2\cdot\int_0^\infty e^{-t}\prod_{j=1}^m|\{x\in H_j\,:\,P_{H_j}f(x)\geqslant e^{-t}\Vert f\Vert_\infty\}|^{p_j}dt
$$
and argue in the same way. Finally, \eqref{eq4} is obtained from \eqref{eq3} again by taking the uniform cover of $[n]$ by the sets $(S_1^c,\dots, S_m^c)$ with weights $(p_1^\prime,\dots,p_m^\prime)$.
\end{proof}

\noindent\textit{Remark.}
Notice that if $K$ is a convex body containing the origin in its interior, while the Brascamp-Lieb inequality provides the inequalities in Theorem \ref{thm:AffineInvariantBollobasThomasonIntro} by taking the functions $f_j=\chi_{P_{H_j}K}$, this theorem provides them by taking $f(x)=e^{-\Vert x\Vert_K}$.

Moreover, using the inequality \eqref{eq1} for a log-concave probability density $p$ on $\mathbb{R}^n$ such that $p(0)=1$ and $p(\pm x_1,\ldots, \pm x_n)$ does not depend on the choice of signs, we prove the sharp version of \cite[Lemma 3.1]{BN}. More precisely, for $d_j=n-1$ and $p_j=\frac{1}{n-1}$, we obtain
$$\prod_{j=1}^n\int_{\{x_j=0\}}p(x)\, dx\geqslant\frac{n!}{n^n}.$$
This is sharp because we have equality for the density
$$p(x)=\exp\left(-2n!^{1/n}\max_{j\leqslant n}|x_j| \right).$$
In particular, we have the following, which extends \cite[Lemma 2.10]{AAGJV}.

\begin{cor}\label{cor:AffineFunctionalMin}
Let $\{w_i\}_{i=1}^n$ be a basis of $\R^n$ and let $(S_1,\dots, S_m)$ form a uniform cover of $[n]$ with weights $(p_1,\dots p_m)$. If $H_j={\rm span}\{w_k\,:\,k\in S_j\}$, $\widetilde{H}_j={\rm span}\{w_k\,:\,k\not\in S_j\}$, $d_j={\rm dim}H_j=|S_j|$,  $\widetilde{d}_j=n-d_j={\rm dim}\widetilde{H}_j$, and $p=\sum_{j=1}^mp_j$ then for any integrable log-concave functions $f_{1,j}: H_j\to[0,\infty)$, $\widetilde{f}_{1,j}:\widetilde{H}_{j}:\to[0,\infty)$, $f_{2,j}: H_j^\perp\to[0,\infty)$, $\widetilde{f}_{2,j}:\widetilde{H}_{j}^\perp:\to[0,\infty)$, we have the following four inequalities:

\begin{align}
\label{eq:4.1}\int_{\R^n}\min_{1\leqslant j\leqslant m}\left\{\frac{\widetilde{f}_{2,j}(P_{\widetilde{H}_j^\perp}x)}{\Vert\widetilde{f}_{2,j}\Vert_\infty}\right\}dx&\leqslant BL_1\cdot \frac{n!}{\prod_{j=1}^m(d_j!)^{p_j}}\prod_{j=1}^m\left(\int_{\widetilde{H}_j^\perp}\frac{\widetilde{f}_{2,j}(x)}{\Vert\widetilde{f}_{2,j}\Vert_\infty}dx\right)^{p_j},\\
\label{eq:4.2}\left(\int_{\R^n}\min_{1\leqslant j\leqslant m}\left\{\frac{f_{2,j}(P_{H_j^\perp}x)}{\Vert f_{2,j}\Vert_\infty}\right\}dx\right)^{p-1}&\leqslant BL_2\cdot \frac{(n!)^{p-1}}{\prod_{j=1}^m(\widetilde{d}_j!)^{p_j}}\prod_{j=1}^m\left(\int_{H_j^\perp}\frac{f_{2,j}(x)}{\Vert f_{2,j}\Vert_\infty}dx\right)^{p_j},\\
\label{eq:4.3}\int_{\R^n}\min_{1\leqslant j\leqslant m}\left\{\frac{f_{1,j}(P_{H_j}x)}{\Vert f_{1,j}\Vert_\infty}\right\}dx&\leqslant BL_2\cdot \frac{n!}{\prod_{j=1}^m(d_j!)^{p_j}}\prod_{j=1}^m\left(\int_{H_j}\frac{f_{1,j}(x)}{\Vert f_{1,j}\Vert_\infty}dx\right)^{p_j},\\
\label{eq:4.4}\left(\int_{\R^n}\min_{1\leqslant j\leqslant m}\left\{\frac{\widetilde{f}_{1,j}(P_{\widetilde{H}_j}x)}{\Vert\widetilde{f}_{1,j}\Vert_\infty}dx\right\}\right)^{p-1}&\leqslant BL_1\cdot \frac{(n!)^{p-1}}{\prod_{j=1}^{m}(\widetilde{d}_j!)^{p_j}}\prod_{j=1}^m\left(\int_{\widetilde{H}_j}\frac{\widetilde{f}_{1,j}(x)}{\Vert\widetilde{f}_{1,j}\Vert_\infty}dx \right)^{p_j}.
\end{align}
\end{cor}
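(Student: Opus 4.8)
The plan is to reduce all four inequalities to the geometric Bollob\'as--Thomason inequalities of Theorem \ref{thm:AffineInvariantBollobasThomasonIntro} by the same layer-cake argument used in the proof of Theorem \ref{thm:AffineInvariantFunctionalBollobasThomason}, the new point being that it must now be run on the \emph{minimum} of several log-concave functions. I describe the argument for \eqref{eq:4.1}; the other three come out in exactly the same way, using \eqref{eq2AffineInvariantBT}, \eqref{eq3AffineInvariantBT} and \eqref{eq4AffineInvariantBT} respectively in place of \eqref{eq1AffineInvariantBT}. We may assume each $\widetilde f_{2,j}$ is not identically zero (otherwise both sides vanish) and that the right-hand side is finite, so that $\int_{\widetilde H_j^\perp}\widetilde f_{2,j}/\Vert\widetilde f_{2,j}\Vert_\infty<\infty$ for all $j$. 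Put
$$
f(x)=\min_{1\leqslant j\leqslant m}\frac{\widetilde f_{2,j}(P_{\widetilde H_j^\perp}x)}{\Vert\widetilde f_{2,j}\Vert_\infty},\qquad x\in\R^n ,
$$
which is log-concave on $\R^n$ (a minimum of compositions of log-concave functions with linear maps) and satisfies $0\leqslant f\leqslant 1$.

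Next I would analyse the super-level sets: for $s\in(0,1]$, writing $L_{j,s}=\{y\in\widetilde H_j^\perp:\widetilde f_{2,j}(y)\geqslant s\Vert\widetilde f_{2,j}\Vert_\infty\}$, one has $\{f\geqslant s\}=\bigcap_{j=1}^m P_{\widetilde H_j^\perp}^{-1}(L_{j,s})$. Since the $L_{j,s}$ are bounded convex subsets of the $d_j$-dimensional spaces $\widetilde H_j^\perp$ and $\bigcap_j\widetilde H_j={\rm span}\{w_k:k\notin\bigcup_j S_j\}=\{0\}$ (the $S_j$ cover $[n]$), the set $\{f\geqslant s\}$ is compact and convex, with $P_{\widetilde H_j^\perp}\{f\geqslant s\}\subseteq L_{j,s}$. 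Applying \eqref{eq1AffineInvariantBT} to $\{f\geqslant s\}$ gives $|\{f\geqslant s\}|\leqslant BL_1\prod_{j=1}^m|L_{j,s}|^{p_j}$, and since $f\leqslant 1$, Fubini gives $\int_{\R^n}f=\int_0^\infty e^{-t}|\{f\geqslant e^{-t}\}|\,dt$, so
$$
\int_{\R^n}f\leqslant BL_1\int_0^\infty e^{-t}\prod_{j=1}^m|L_{j,e^{-t}}|^{p_j}\,dt .
$$
Using $\sum_{j=1}^m p_jd_j=n$, H\"older's inequality with exponents $n/(p_jd_j)$ splits this as $\prod_{j=1}^m\big(\int_0^\infty e^{-t}|L_{j,e^{-t}}|^{n/d_j}\,dt\big)^{p_jd_j/n}$; then, since $h_j(t):=|L_{j,e^{-t}}|^{1/d_j}$ is a continuous, concave, not identically zero function on $[0,\infty)$ (the set $\{(y,t):\widetilde f_{2,j}(y)\geqslant e^{-t}\Vert\widetilde f_{2,j}\Vert_\infty\}$ is convex and Brunn--Minkowski applies in $\widetilde H_j^\perp$), Lemma \ref{lem:0-dimBerwaldExponentialDensity} with $\gamma_1=d_j\leqslant n=\gamma_2$ yields $\int_0^\infty e^{-t}h_j^n\,dt\leqslant n!\,\big(\tfrac{1}{d_j!}\int_0^\infty e^{-t}h_j^{d_j}\,dt\big)^{n/d_j}$, where $\int_0^\infty e^{-t}h_j^{d_j}\,dt=\int_{\widetilde H_j^\perp}\widetilde f_{2,j}/\Vert\widetilde f_{2,j}\Vert_\infty$. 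Substituting and using $\sum_j p_jd_j/n=1$ once more collects the factor $n!$ and the constants $(d_j!)^{-p_j}$, giving exactly \eqref{eq:4.1}. For \eqref{eq:4.2} and \eqref{eq:4.4}, where one projects onto the $\widetilde d_j$-dimensional spaces $H_j^\perp$, $\widetilde H_j$ and the left-hand side carries the power $p-1$, the only change is that one uses $\sum_j p_j\widetilde d_j=n(p-1)$ in the H\"older step and Lemma \ref{lem:0-dimBerwaldExponentialDensity} with $\gamma_1=\widetilde d_j$, which manufactures the exponent $p-1$ and the factors $(\widetilde d_j!)^{-p_j}$; \eqref{eq:4.3} goes exactly like \eqref{eq:4.1} with $H_j$ in place of $\widetilde H_j^\perp$.

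The step I expect to be the real obstacle, and the reason this cannot simply be quoted from Theorem \ref{thm:AffineInvariantFunctionalBollobasThomason} applied to $f$, is the treatment of $\Vert f\Vert_\infty$: the maximisers of the $\widetilde f_{2,j}$ need not be attained simultaneously at a single point $x_0\in\R^n$ (this already fails once two of the subspaces $\widetilde H_j^\perp$ overlap, which happens precisely when $p>1$), so in general $\Vert f\Vert_\infty<1$ and the bound $\Vert f\Vert_\infty^{p-1}\Vert f\Vert_1\leqslant\cdots$ coming from that theorem is strictly weaker than \eqref{eq:4.1}. The resolution is exactly the observation that $f\leqslant 1$ by construction, so that the set $C=\{(x,t)\in\R^n\times[0,\infty):f(x)\geqslant e^{-t}\}$ already captures the full mass, $\int_C e^{-t}\,dx\,dt=\int_{\R^n}f$, and the level-set argument can be run with the un-normalised levels. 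One should also check, the right-hand side being assumed finite, that the displayed chain forces $\int_{\R^n}f<\infty$ and that all intermediate quantities are finite (so the H\"older and Berwald steps are legitimate); this is the only bookkeeping that needs a little care.
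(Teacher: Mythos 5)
Your proof is correct and follows the same route as the paper: the corollary is obtained by running the level-set/H\"older/Berwald argument from the proof of Theorem \ref{thm:AffineInvariantFunctionalBollobasThomason} on $f=\min_j\widetilde f_{2,j}(P_{\widetilde H_j^\perp}\cdot)/\Vert\widetilde f_{2,j}\Vert_\infty$, using that its super-level sets project into those of the normalised $\widetilde f_{2,j}$. The paper's proof is literally the one-line remark that $P_{\widetilde H_j^\perp}f\leqslant\widetilde f_{2,j}/\Vert\widetilde f_{2,j}\Vert_\infty$, i.e.\ it defers entirely to that theorem. Your point about the normalisation is well taken: since $\Vert f\Vert_\infty$ can be strictly less than $1$ when $p>1$ (the maximisers of the $\widetilde f_{2,j}$ need not be simultaneously attainable), quoting \eqref{eq1} verbatim only yields $\Vert f\Vert_1\leqslant\Vert f\Vert_\infty^{-(p-1)}\cdot(\text{RHS})$, which is weaker than \eqref{eq:4.1}; one must indeed rerun the argument with the un-normalised levels $\{f\geqslant e^{-t}\}$, exactly as you do, exploiting $f\leqslant 1$ so that $\int_0^\infty e^{-t}|\{f\geqslant e^{-t}\}|\,dt=\Vert f\Vert_1$ and the level sets at height $e^{-t}$ are precisely $\bigcap_j P_{\widetilde H_j^\perp}^{-1}(L_{j,e^{-t}})$. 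So your write-up is not a different method but a complete version of the paper's sketch, closing a genuine (if small) gap in its one-line deduction; the remaining remarks about compactness of the level sets (via triviality of $\bigcap_j\widetilde H_j$) and finiteness bookkeeping are routine and handled adequately.
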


\begin{proof} Simply take into account that $f(x)=\min_{1\leqslant j\leqslant m}\left\{\frac{\widetilde{f}_{2,j}(P_{\widetilde{H}_j^\perp}x)}{\Vert\widetilde{f}_{2,j}\Vert_\infty}\right\}$ verifies that
$P_{\widetilde{H}_j^\perp}f\leqslant\frac{\widetilde{f}_{2,j}}{\Vert \widetilde{f}_{2,j}\Vert_\infty}$.\end{proof}

\smallskip

Next we prove restricted versions of these inequalities. We will first prove a functional version for log-concave functions and obtain the geometric version as a consequence of it.

\begin{thm}\label{thm:AffineLoomisWhitneyFunctionalLocal}
Let $\{w_i\}_{i=1}^n$ be a basis of $\R^n$ and let $S\subseteq[n]$ with cardinality $|S|=d$. Let $(S_1,\dots, S_m)$ form a uniform cover of $S$ with weights $(p_1,\dots, p_m)$. If $H={\rm span}\{w_k\,:\,k\in S\}$, $H_j={\rm span}\{w_k\,:\,k\in S_j\}$, $\widetilde{H}_j={\rm span}\{w_k\,:\,k\in S\setminus S_j\}$, $d_j={\rm dim}H_j=|S_j|$, $\widetilde{d}_j=d-d_j={\rm dim}\widetilde{H}_j$, and $p=\sum_{j=1}^mp_j$ then for every $f\in\mathcal{F}(\R^n)$ we have the following four inequalities:
\begin{align}
\label{eq:4.9}\Vert P_{H^\perp}f\Vert_1^{p-1}\Vert f\Vert_1&\leqslant\frac{d!\prod_{j=1}^m|\wedge_{k\in S\setminus S_j}w_k|^{p_j}}{|\wedge_{i\in S}w_i|^{p-1}\prod_{j=1}^m(d_j!)^{p_j}}\prod_{j=1}^m\Vert P_{\widetilde{H}_j^\perp}f\Vert_1^{p_j},\\
\label{eq:4.10}\Vert P_{H^\perp}f\Vert_1\Vert f\Vert_1^{p-1}&\leqslant\frac{(d!)^{p-1}\prod_{j=1}^m|\wedge_{k\in S_j}w_k|^{p_j}}{|\wedge_{i\in S}w_i|\prod_{j=1}^m(\widetilde{d}_j!)^{p_j}}\prod_{j=1}^m\Vert P_{H_j^\perp}f\Vert_1^{p_j},\\
\label{eq:4.11}\Vert P_{H^\perp}f\Vert_1^{p-1}\Vert f\Vert_1&\leqslant\frac{d!\prod_{j=1}^m|\wedge_{k\in S_j}w_k|^{p_j}}{|\wedge_{i\in S}w_i|\prod_{j=1}^m(d_j!)^{p_j}}\prod_{j=1}^m\Vert P_{H_j\oplus H^\perp}f\Vert_1^{p_j},\\
\label{eq:4.12}\Vert P_{H^\perp}f\Vert_1\Vert f\Vert_1^{p-1}&\leqslant\frac{(d!)^{p-1}\prod_{j=1}^m|\wedge_{k\in S\setminus S_j}w_k|^{p_j}}{|\wedge_{i\in S}w_i|^{p-1}\prod_{j=1}^m(\widetilde{d}_j!)^{p_j}}\prod_{j=1}^m\Vert P_{\widetilde{H}_j\oplus H^\perp}f\Vert_1^{p_j}.
\end{align}
\end{thm}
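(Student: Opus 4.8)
The plan is to prove \eqref{eq:4.9} in detail and to deduce the other three inequalities by the same scheme or by passing to a complementary cover. We may assume $f\not\equiv 0$; as recalled in Section \ref{sec:Preliminaries}, $f$ and all the projections $P_{H^\perp}f$, $P_{\widetilde H_j^\perp}f$, $P_{H_j\oplus H^\perp}f$ are then bounded, integrable and log-concave on their respective subspaces, with common sup-norm $\|f\|_\infty$.

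First I would lift $f$ to the convex set $C=\{(x,t)\in\R^n\times[0,\infty):f(x)\geqslant e^{-t}\|f\|_\infty\}$, for which $\int_C e^{-t}\,dx\,dt=\|f\|_1/\|f\|_\infty$, exactly as in the proof of Theorem \ref{thm:AffineInvariantFunctionalBollobasThomason}. Splitting $\R^n=H\oplus H^\perp$ orthogonally and integrating over the $H$-coordinate, $\int_C e^{-t}\,dx\,dt=\int_{H^\perp\times[0,\infty)}|C_t\cap(z+H)|\,e^{-t}\,dz\,dt$, where $C_t=\{f\geqslant e^{-t}\|f\|_\infty\}$ and $C_t\cap(z+H)$ is identified with the convex body $\{u\in H:u+z\in C_t\}$. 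To each such slice I would apply the affine Bollob\'as--Thomason inequality \eqref{eq1AffineInvariantBT} \emph{inside} $H$, with respect to the basis $\{w_k\}_{k\in S}$ of $H$ and the uniform cover $(S_1,\dots,S_m)$ of $S$; this gives $|C_t\cap(z+H)|\leqslant BL_1\,\prod_{j=1}^m\beta_j(z,t)^{p_j}$, where $BL_1=BL_1((w_i)_{i\in S},S,(S_j),(p_j))$ is the constant in the statement and $\beta_j(z,t)$ is the $d_j$-dimensional volume of the section of $C_t\cap(z+H)$ by a translate of the orthogonal complement $F_j$ of $\widetilde H_j$ inside $H$. The geometric point is that $\beta_j(z,t)$ is equally the $d_j$-dimensional section, by $(z,t)+F_j$, of the convex body $\mathcal C_j=\{(w,t)\in\widetilde H_j^\perp\times[0,\infty):P_{\widetilde H_j^\perp}f(w)\geqslant e^{-t}\|f\|_\infty\}$, since $\widetilde H_j^\perp=F_j\oplus H^\perp$ and the supremum in $P_{\widetilde H_j^\perp}f$ runs over $\widetilde H_j$. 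Consequently, by the Brunn--Minkowski inequality $\beta_j^{1/d_j}$ is concave on the convex set $P_{H^\perp\times\R}\mathcal C_j$; and since $P_{H^\perp}\circ P_{\widetilde H_j^\perp}f=P_{H^\perp}f$, this projection is the single set $D=\{(z,t)\in H^\perp\times[0,\infty):P_{H^\perp}f(z)\geqslant e^{-t}\|f\|_\infty\}$, independent of $j$. Moreover $\int_D\beta_j\,e^{-t}\,dz\,dt=\|P_{\widetilde H_j^\perp}f\|_1/\|f\|_\infty$ and $V_D:=\int_D e^{-t}\,dz\,dt=\|P_{H^\perp}f\|_1/\|f\|_\infty$.

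Next I would apply H\"older's inequality on $\bigl(H^\perp\times[0,\infty),\,e^{-t}\,dz\,dt\bigr)$ with exponents $q_j:=d/d_j$, which is legitimate because $\sum_j p_jd_j=|S|=d$, so $\sum_j p_j/q_j=1$; this gives $\int_C e^{-t}\leqslant BL_1\prod_j\bigl(\int_D\beta_j^{\,d/d_j}e^{-t}\bigr)^{p_jd_j/d}$. To each factor I would apply Berwald's inequality (Lemma \ref{lem:BerwaldExponentialDensity}) to the concave function $\beta_j^{1/d_j}$ on the set $D$, with parameters $\gamma=d_j$ and $\gamma=d\ (\geqslant d_j)$, which replaces $\bigl(\int_D\beta_j^{\,d/d_j}e^{-t}\bigr)^{1/d}$ by $(d!)^{1/d}(d_j!)^{-1/d_j}V_D^{1/d-1/d_j}\bigl(\int_D\beta_j e^{-t}\bigr)^{1/d_j}$. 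Collecting exponents, the powers of $\|f\|_\infty$ cancel, the powers of $V_D$ add up to $\bigl(\|P_{H^\perp}f\|_1/\|f\|_\infty\bigr)^{1-p}$, and the numerical constant becomes precisely $BL_1\cdot d!\big/\prod_j(d_j!)^{p_j}$; rearranging yields \eqref{eq:4.9}. Inequality \eqref{eq:4.11} is obtained identically, using instead the affine inequality \eqref{eq3AffineInvariantBT} (projections onto $H_j$) inside $H$; the relevant body is then $\mathcal C_j'=\{(v,t):P_{H_j\oplus H^\perp}f(v)\geqslant e^{-t}\|f\|_\infty\}$, whose projection onto $H^\perp\times\R$ is again $D$, so the constant $BL_2$ replaces $BL_1$. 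Finally \eqref{eq:4.10} and \eqref{eq:4.12} follow from \eqref{eq:4.9} and \eqref{eq:4.11} by passing to the complementary uniform cover $(S\setminus S_1,\dots,S\setminus S_m)$ of $S$ with weights $(p_1/(p-1),\dots,p_m/(p-1))$, exactly as \eqref{eq2} was deduced from \eqref{eq1}.

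The crux of the argument — and the step I expect to require the most care — is arranging the lift and the H\"older exponents so that a \emph{single} application of Berwald suffices: this works because, after introducing the extra coordinate, the ``base'' convex set $D=P_{H^\perp\times\R}\mathcal C_j$ onto which all the section functions $\beta_j^{1/d_j}$ are concave does not depend on $j$ (thanks to $P_{H^\perp}\circ P_{\widetilde H_j^\perp}f=P_{H^\perp}f$), and because the exponents $q_j=d/d_j$ are exactly those for which the $\Gamma$-factors produced by Berwald collapse to the factorial constant $d!/\prod_j(d_j!)^{p_j}$. A minor technical point is the regularity needed to apply Brunn--Minkowski and Berwald to the section functions (continuity of $\beta_j$ on the interior of $D$), which is handled by the standard approximation of $f$ by smooth compactly supported log-concave functions.
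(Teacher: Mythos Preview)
Your proof is correct and follows essentially the same route as the paper: lift to the convex set $C$, apply the affine Bollob\'as--Thomason inequality \eqref{eq1AffineInvariantBT} fibre by fibre inside $H$, use H\"older with weights $p_jd_j/d$, and then Lemma~\ref{lem:BerwaldExponentialDensity} on the concave functions $\beta_j^{1/d_j}$ over the common base $D=P_{\overline{H^\perp}}C$, deducing the remaining three inequalities exactly as the paper does. One small slip in wording: when you first introduce $\beta_j(z,t)$ you describe it as the \emph{section} of $C_t\cap(z+H)$ by a translate of $F_j$, but what \eqref{eq1AffineInvariantBT} actually produces is the \emph{projection} onto $F_j$; it is this projection that coincides with the $F_j$-section of $\mathcal C_j$, which is what you (correctly) use thereafter.
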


\begin{proof}
Let $C$ be the set
$$
C=\{(x,t)\in\R^{n}\times[0,\infty): f(x)\geqslant e^{-t}\Vert f\Vert_\infty\}
$$
and for any linear subspace $F$ denote $\overline{F}={\rm span}\{F, e_{n+1}\}$. We have that
$$
\int_C e^{-t}dxdt=\int_{\R^n}\frac{f(x)}{\Vert f\Vert_\infty}dx \quad {\rm and}\quad\int_{P_{\overline{F}}C} e^{-t}dxdt=\int_{F}\frac{P_Ff(x)}{\Vert f\Vert_\infty}dx.
$$

Notice that, by inequality \eqref{eq1AffineInvariantBT} in Theorem \ref{thm:AffineInvariantBollobasThomasonIntro},
\begin{eqnarray*}
&&\hspace*{-0.5cm}\int_C e^{-t}dxdt=\int_{P_{\overline{H^\perp}}C}e^{-t}|C\cap((x,t)+H)|dtdx\cr
&\leqslant&\frac{\prod_{j=1}^m|\wedge_{k\in S\setminus S_j}w_k|^{p_j}}{|\wedge_{i\in S}w_i|^{p-1}}\int_{P_{\overline{H^\perp}}C}e^{-t}\prod_{j=1}^m|P_{(x,t)+(\widetilde{H}_j^\perp\cap H)}(C\cap((x,t)+H))|^{p_j}dtdx\cr
&=&\frac{\prod_{j=1}^m|\wedge_{k\in S\setminus S_j}w_k|^{p_j}}{|\wedge_{i\in S}w_i|^{p-1}}\int_{P_{\overline{H^\perp}}C}e^{-t}\prod_{j=1}^m|P_{\widetilde{H}_j^\perp}C\cap((x,t)+(\widetilde{H}_j^\perp\cap H))|^{p_j}dtdx.
\end{eqnarray*}

Since $\displaystyle{\sum_{j=1}^m\frac{p_jd_j}{d}=1}$, by H\"older's inequality, we have that
\begin{eqnarray*}
&&\hspace*{-0.5cm}\frac{1}{\int_{H^\perp}\frac{P_{H^\perp}f(x)}{\Vert f\Vert_\infty}dx}\int_{P_{\overline{H^\perp}}C}e^{-t}\prod_{j=1}^m|P_{\widetilde{H}_j^\perp}C\cap((x,t)+(\widetilde{H}_j^\perp\cap H))|^{p_j}dtdx\cr
&\leqslant&\prod_{j=1}^m\left(\frac{1}{\int_{H^\perp}\frac{P_{H^\perp}f(x)}{\Vert f\Vert_\infty}dx}\int_{P_{\overline{H^\perp}}C}e^{-t}|P_{\overline{\widetilde{H}_j^\perp}}C\cap((x,t)+(\widetilde{H}_j^\perp\cap H))|^\frac{d}{d_j}dtdx\right)^\frac{p_jd_j}{d}.
\end{eqnarray*}
The sets
$$
L_j=P_{\overline{\widetilde{H}_j^\perp}}C=\{(x,t)\in \widetilde{H}_j^\perp\times[0,\infty)\,:\,P_{\widetilde{H}_j^\perp}f(x)\geqslant e^{-t}\Vert f\Vert_\infty\}
$$
are convex, therefore, by the Brunn-Minkowski inequality, the functions
$$
h_j(x,t)=|P_{\overline{\widetilde{H}_j^\perp}}C\cap((x,t)+(\widetilde{H}_j^\perp\cap H))|^\frac{1}{d_j}
$$
are concave. We can now use Lemma \ref{lem:BerwaldExponentialDensity} and get that the $j$-th term of the above product is at most
$$
\left(\frac{1}{d_j!\int_{H^\perp}\frac{P_{H^\perp}f(x)}{\Vert f\Vert_\infty}dx}\int_{P_{\overline{H^\perp}}C}e^{-t}h_j(x,t)^{d_j}\,dt\,dx\right)^{p_j},$$
for every $1\leqslant j\leqslant m$. However, the last integral is equal to
$$
\left(\frac{1}{d_j!\int_{H^\perp}\frac{P_{H^\perp}f(x)}{\Vert f\Vert_\infty}dx}\int_{P_{\overline{\widetilde{H}_j^\perp}}C}e^{-t}dt\,dx\right)^{p_j}
=\left(\frac{1}{d_j!\int_{H^\perp}\frac{P_{H^\perp}f(x)}{\Vert f\Vert_\infty}dx}\int_{\widetilde{H}_j^\perp}\frac{P_{\widetilde{H}_j^\perp}f(x)}{\Vert f\Vert_\infty} dx\right)^{p_j}.$$
Thus,
$$
\int_C e^{-t}dt\leqslant\frac{d!\prod_{j=1}^m|\wedge_{k\in S\setminus S_j}w_k|^{p_j}}{|\wedge_{i\in S}w_i|^{p-1}\prod_{j=1}^m(d_j!)^{p_j}}\frac{\prod_{j=1}^m\left(\int_{\widetilde{H}_j^\perp}\frac{P_{\widetilde{H}_j^\perp}f(x)}{\Vert f\Vert_\infty} dx\right)^{p_j}}{\left(\int_{H^\perp}\frac{P_{H^\perp}f(x)}{\Vert f\Vert_\infty}dx\right)^{p-1}},
$$
which proves \eqref{eq:4.9}. As before, \eqref{eq:4.10} is obtained from \eqref{eq:4.9} by taking into account that $(S\setminus S_1,\dots S\setminus S_m)$ is a uniform cover of $S$ with weights $(p_1^\prime,\dots,p_m^\prime)$. In order to prove \eqref{eq:4.11} we apply inequality \eqref{eq3AffineInvariantBT} of Theorem \ref{thm:AffineInvariantBollobasThomasonIntro} to obtain
$$
\int_C e^{-t}dxdt\leqslant\frac{\prod_{j=1}^m|\wedge_{k\in S_j}w_k|^{p_j}}{|\wedge_{i\in S}w_i|}\int_{P_{\overline{H^\perp}}C}e^{-t}\prod_{j=1}^m|P_{\overline{H_j\oplus H^\perp}}C\cap((x,t)+H_j)|^{p_j}dtdx
$$
and argue in the same way as for \eqref{eq:4.9}.
\end{proof}

If  $K$ a convex body containing the origin, applying the inequalities of Theorem \ref{thm:AffineLoomisWhitneyFunctionalLocal} to the function $e^{-\Vert x\Vert_K}$, we prove Theorem \ref{thm:AffineGeometricLocalLoomisWhitneyIntro}. Let us point out that Theorem \ref{thm:AffineGeometricLocalLoomisWhitneyIntro} can be proved directly, without making use of Theorem \ref{thm:AffineLoomisWhitneyFunctionalLocal}, by using Theorem \ref{berwald01} and the same technique as in the proof of Theorem \ref{thm:AffineLoomisWhitneyFunctionalLocal}.

\section{Dual Loomis-Whitney type inequalities}\label{sec:DualLoomis-WhiteneyInequalities}

In this section we will prove dual Loomis-Whitney inequalities, as well as functional and local versions of them. We start with the following consequence of the reverse Brascamp-Lieb inequalities for log-concave functions, which extends Theorem 1.4 in \cite{L}. We note also that $f(0)$ does not play any role in the following theorem, while it did in \cite{L}.

\begin{thm}\label{thm:ReverseBL1Function}
Let $w_1,\dots,w_n$ be $n$ vectors spanning $\R^n$, let $m\geqslant 1$ and let $(S_1,\dots, S_m)$ be a uniform cover of $[n]$ with weights $(p_1,\dots, p_m)$. Let $H_j={\rm span}\{w_k\,:\,k\in S_j\}$, $\widetilde{H}_j={\rm span}\{w_k\,:\,k\not\in S_j\}$, $d_j={\rm dim}H_j$, $\widetilde{d}_j={\rm dim}\widetilde{H}_j=n-d_j$, and $p=\sum_{j=1}^mp_j$. Then, for every $f\in\mathcal{F}(\R^n)$ we have the following inequalities:
\begin{align}
\label{eq:5.1}\int_{\R^n}f^n(x)dx&\geqslant\frac{1}{BL_1}\frac{\prod_{j=1}^m(d_j)^{p_jd_j}}{n^n}\prod_{j=1}^m\left(\int_{\widetilde{H}_j^\perp}f^{d_j}(x)dx\right)^{p_j},\\
\label{eq:5.2}\left(\int_{\R^n}f^n(x)dx\right)^{p-1}&\geqslant\frac{1}{BL_2}\frac{\prod_{j=1}^m(\widetilde{d}_j)^{p_j\widetilde{d}_j}}{n^{n(p-1)}}\prod_{j=1}^m\left(\int_{H_j^\perp}f^{\widetilde{d}_j}(x)dx\right)^{p_j}\\
\label{eq:5.3}\int_{\R^n}f^n(x)dx&\geqslant\frac{1}{BL_2}\frac{\prod_{j=1}^m(d_j)^{p_jd_j}}{n^n}\prod_{j=1}^m\left(\int_{H_j}f^{d_j}(x)dx\right)^{p_j}\\
\label{eq:5.4}\left(\int_{\R^n}f^n(x)dx\right)^{p-1}&\geqslant\frac{1}{BL_1}\frac{\prod_{j=1}^m(\widetilde{d}_j)^{p_j\widetilde{d}_j}}{n^{n(p-1)}}\prod_{j=1}^m\left(\int_{\widetilde{H}_j}f^{\widetilde{d}_j}(x)dx\right)^{p_j}.
\end{align}
\end{thm}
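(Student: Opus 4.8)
The plan is to deduce each of the four inequalities from the reverse half of the corresponding affine invariant Finner inequality of Section \ref{sec:Brascamp-LiebInequalities}: \eqref{eq:5.1} from the reverse part of Theorem \ref{thm:AffineFinner}, \eqref{eq:5.3} from that of Theorem \ref{thm:BrascampLiebSpannedInSj}, and \eqref{eq:5.2} and \eqref{eq:5.4} from the reverse parts of Theorem \ref{thm:AffineFinner2} and Theorem \ref{thm:BrascampLiebSpannedInSjc} respectively. We may assume $f\not\equiv 0$, so that $\|f\|_\infty\in(0,\infty)$, and write $f=\|f\|_\infty e^{-v}$ with $v\colon\R^n\to(-\infty,\infty]$ convex; we may also assume each $S_j\neq\emptyset$ (and, for \eqref{eq:5.2} and \eqref{eq:5.4}, also $S_j\neq[n]$), the remaining cases being trivial. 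In every case the function fed into the reverse inequality will be $F=f^n$, which is integrable since $F\leqslant\|f\|_\infty^{n-1}f$; what must be chosen carefully are the building blocks on the subspaces.

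For \eqref{eq:5.1} I would argue as follows. On $\widetilde H_j^\perp$, which has dimension $d_j=|S_j|$, set $F_j(x)=f(\tfrac{n}{d_j}x)^{d_j}$. The first step is to check the hypothesis of the reverse inequality in Theorem \ref{thm:AffineFinner}, i.e. that $F(\sum_j p_jx_j)\geqslant\prod_j F_j(x_j)^{p_j}$ for all $x_j\in\widetilde H_j^\perp$. Writing out both sides, the powers of $\|f\|_\infty$ coincide because $\sum_j p_jd_j=n$, which is precisely the uniform cover condition; so the claim reduces to $n\,v(\sum_j p_jx_j)\leqslant\sum_j p_jd_j\,v(\tfrac{n}{d_j}x_j)$. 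Putting $\lambda_j=p_jd_j/n$, which are nonnegative and sum to $1$, and $y_j=\tfrac{n}{d_j}x_j$, one has $\sum_j\lambda_jy_j=\sum_j p_jx_j$, so this is just Jensen's inequality for the convex function $v$. The reverse inequality of Theorem \ref{thm:AffineFinner} then gives $\int_{\R^n}f^n\geqslant\frac{1}{BL_1}\prod_j(\int_{\widetilde H_j^\perp}F_j)^{p_j}$, and the linear change of variables $y=\tfrac{n}{d_j}x$ on the $d_j$-dimensional space $\widetilde H_j^\perp$ yields $\int_{\widetilde H_j^\perp}F_j=(d_j/n)^{d_j}\int_{\widetilde H_j^\perp}f^{d_j}$. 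Substituting this and using $\sum_j p_jd_j=n$ once more produces exactly \eqref{eq:5.1}.

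Inequality \eqref{eq:5.3} is obtained in the same way with $H_j$ (also of dimension $d_j$) in place of $\widetilde H_j^\perp$ and Theorem \ref{thm:BrascampLiebSpannedInSj} in place of Theorem \ref{thm:AffineFinner}. For \eqref{eq:5.2} and \eqref{eq:5.4} one runs the identical scheme for the complementary uniform cover $(S_1^c,\dots,S_m^c)$, which has weights $q_j=p_j/(p-1)$: the subspaces $H_j^\perp$, respectively $\widetilde H_j$, then have dimension $\widetilde d_j=n-d_j$, one takes $F_j(x)=f(\tfrac{n}{\widetilde d_j}x)^{\widetilde d_j}$, and the identity that makes everything fit is $\sum_j q_j\widetilde d_j=n$, equivalently $\sum_j p_j\widetilde d_j=\sum_j p_j(n-d_j)=n(p-1)$, which again turns the coefficients $q_j\widetilde d_j/n$ of the convexity step into a convex combination. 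The reverse parts of Theorem \ref{thm:AffineFinner2} and Theorem \ref{thm:BrascampLiebSpannedInSjc} carry the constants $BL_2^{-1/(p-1)}$ and $BL_1^{-1/(p-1)}$; after the change of variables, raising to the power $p-1$, and using $(p-1)q_j=p_j$, one arrives at \eqref{eq:5.2} and \eqref{eq:5.4}.

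I do not expect a genuine obstacle. Once the choices $F=f^n$ and $F_j=f(\tfrac{n}{\dim}\,\cdot\,)^{\dim}$ are made, the rest is bookkeeping of Jacobians and weights. The one structural point to highlight is that the dilation exponent on the $j$-th subspace is \emph{forced} to be its dimension exactly so that the uniform cover identity $\sum_j p_jd_j=n$ (or its complement) renders the coefficients of the convexity step a legitimate convex combination. The only technicality is the standard fact that the restriction of an integrable log-concave function to a linear subspace is integrable there, inheriting exponential decay, so that the integrals $\int_{\widetilde H_j^\perp}f^{d_j}$ and the like are finite; and should one of them vanish, the corresponding inequality holds trivially.
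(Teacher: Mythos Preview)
Your proposal is correct and is essentially the paper's own argument. The only cosmetic difference is that the paper places the dilations symmetrically, taking $g(x)=f^n(x/n)$ and $g_j(x)=f^{d_j}(x/d_j)$, whereas you keep $F=f^n$ undilated and put the full dilation into $F_j(x)=f(\tfrac{n}{d_j}x)^{d_j}$; after the obvious change of variables these are the same computation, and the paper likewise obtains \eqref{eq:5.2} and \eqref{eq:5.4} by passing to the complementary cover exactly as you do.
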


\begin{proof}
For the first one, let $g(x)=f^n(x/n)$ and for every $1\leqslant j\leqslant m$, let $g_j(x)=f^{d_j}(x/d_j)$ for all $x\in\widetilde{H}_j^\perp$. For every $x_j\in \widetilde{H}_j^\perp$, if we write $x=\sum_{j=1}^mp_jx_j$, we have that $x=\sum_{j=1}^m p_jd_j y_j$ with $y_j=x_j/d_j$. Then, notice that from $I_n=\sum_{j=1}^mp_jP_{H_j^\prime}$, we have that $\sum_{j=1}^m p_jd_j=n$. Since $f$ is log-concave,
$$
g(x)=f^n\left(\frac{\sum_{j=1}^mp_jd_jy_j}{n}\right)\geqslant\prod_{j=1}^mf^{p_jd_j}(y_j)=\prod_{j=1}^mg^{p_j}(x_j).
$$
By Theorem \ref{thm:AffineFinner} we have that
$$
\int_{\R^n}f^n\left(\frac{y}{n}\right)dx\geqslant\frac{|\wedge_{i=1}^nw_i|^{p-1}}{\prod_{j=1}^m|\wedge_{k\not\in S_j}w_k|^{p_j}}\prod_{j=1}^m\left(\int_{H_j}f^{d_j}\left(\frac{y}{d_j}\right)dx\right)^{p_j}.
$$
Making the change of variables $y=nx$ in the integral on the left hand side and $y=d_jx$ in each of the integrals of the right hand side we obtain the result.

Taking into account that $(S_1^c,\dots, S_m^c)$ forms a uniform cover of $[n]$ with weights $(p_1^\prime,\dots,p_m^\prime)$, where $p_j^\prime=\frac{p_j}{p-1}$, and applying \eqref{eq:5.1} to the subspaces $\widetilde{H}_j$ we obtain \eqref{eq:5.2}. The last two inequalites,
\eqref{eq:5.3} and \eqref{eq:5.4}, are proved in the same way, by using the reverse Brascamp-Lieb in Theorem \ref{thm:BrascampLiebSpannedInSj}.
\end{proof}

\noindent\textit{Remark.} If $K$ is a convex body containing the origin, applying the latter theorem to the function $f(x)=e^{-\Vert x\Vert_K}$ we obtain Theorem \ref{thm:DualBollobasThomasonIntro}.
\bigskip

The following theorem is also a consequence of the reverse Brascamp-Lieb inequality. It provides inequalities in the spirit of Theorem \ref{thm:ReverseBL1Function} with no powers of the functions involved.

\begin{thm}\label{thm:ReverseBL1FunctionWithoutPowers}
Let $w_1,\dots,w_n$ be $n$ vectors spanning $\R^n$, let $m\geqslant 1$ and let $(S_1,\dots, S_m)$ be a uniform cover of $[n]$ with weights $(p_1,\dots, p_m)$. Let $H_j={\rm span}\{w_k\,:\,k\in S_j\}$, $\widetilde{H}_j={\rm span}\{w_k\,:\,k\not\in S_j\}$, $d_j={\rm dim}H_j=|S_j|$, $\widetilde{d}_j={\rm dim}\widetilde{H}_j=n-d_j$, and $p=\sum_{j=1}^mp_j$. Then, for every $f\in\mathcal{F}(\R^n)$ we have the following inequalities:
\begin{align}
\label{eq:5.5}\Vert f\Vert_\infty^{p-1}\Vert f\Vert_1&\geqslant\frac{1}{BL_1}\cdot\frac{\prod_{j=1}^m(d_j)^{p_jd_j}}{n^n}\prod_{j=1}^m\Vert f\restr{\widetilde{H}_{j}^\perp}\Vert_1^{p_j}\\
\label{eq:5.6}\Vert f\Vert_\infty\Vert f\Vert_1^{p-1}&\geqslant\frac{1}{BL_2}\cdot\frac{\prod_{j=1}^m(\widetilde{d}_j)^{p_j\widetilde{d}_j}}{n^{n(p-1)}}\prod_{j=1}^m\Vert f\restr{H_j^\perp}\Vert_1^{p_j}\\
\label{eq:5.7}\Vert f\Vert_\infty^{p-1}\Vert f\Vert_1&\geqslant\frac{1}{BL_2}\cdot\frac{\prod_{j=1}^m(d_j)^{p_jd_j}}{n^n}\prod_{j=1}^m\Vert f\restr{H_j}\Vert_1^{p_j}\\
\label{eq:5.8}\Vert f\Vert_\infty\Vert f\Vert_1^{p-1}&\geqslant\frac{1}{BL_1}\cdot\frac{\prod_{j=1}^m(\widetilde{d}_j)^{p_j\widetilde{d}_j}}{n^{n(p-1)}}\prod_{j=1}^m\Vert f\restr{\widetilde{H}_j}\Vert_1^{p_j}.
\end{align}
\end{thm}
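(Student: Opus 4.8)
\emph{The plan is to obtain the four inequalities from Theorem~\ref{thm:ReverseBL1Function} by applying that result not to $f$ but to its $n$-th root $g:=f^{1/n}$, and then converting the powers $f^{d_j/n}$ and $f^{\widetilde{d}_j/n}$ appearing on the right-hand sides back to $f$ itself by a one-line pointwise estimate which produces exactly the power of $\Vert f\Vert_\infty$ occurring in the statement.}

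The first step is to observe that $g=f^{1/n}\in\mathcal{F}(\R^n)$: it is log-concave, being a positive power of a log-concave function, and it is integrable because, writing $f=\Vert f\Vert_\infty e^{-v}$ with $v$ convex and $\inf v=0$, the substitution $x\mapsto x/r$ together with the inequality $rv(x/r)\geqslant v(x)$ (valid for $0<r\leqslant1$ by convexity and $\inf v=0$) gives $\int_{\R^n}f^r\leqslant\Vert f\Vert_\infty^{r-1}r^{-n}\Vert f\Vert_1<\infty$ for every $0<r\leqslant1$, in particular for $r=1/n$. Applying inequality~\eqref{eq:5.1} of Theorem~\ref{thm:ReverseBL1Function} to $g$, and using $g^n=f$ and $g^{d_j}=f^{d_j/n}$, we get
\[
\Vert f\Vert_1\;\geqslant\;\frac{1}{BL_1}\cdot\frac{\prod_{j=1}^m d_j^{p_jd_j}}{n^n}\;\prod_{j=1}^m\Big(\int_{\widetilde{H}_j^\perp}f^{d_j/n}(x)\,dx\Big)^{p_j}.
\]

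The only genuinely new ingredient is the elementary pointwise inequality $t^a\geqslant T^{a-1}t$, valid whenever $0\leqslant t\leqslant T$ and $0<a\leqslant1$ (an equality at $t=0$ and when $a=1$). Taking $t=f(x)$, $T=\Vert f\Vert_\infty$ and $a=d_j/n$ gives $f^{d_j/n}\geqslant\Vert f\Vert_\infty^{\frac{d_j}{n}-1}f$ pointwise, hence $\int_{\widetilde{H}_j^\perp}f^{d_j/n}\geqslant\Vert f\Vert_\infty^{\frac{d_j}{n}-1}\Vert f\restr{\widetilde{H}_j^\perp}\Vert_1$. Substituting this into the last display and collecting the powers of $\Vert f\Vert_\infty$ via the uniform-cover identity $\sum_{j=1}^m p_jd_j=n$, so that $\sum_{j=1}^m p_j\big(\tfrac{d_j}{n}-1\big)=1-p$, yields $\Vert f\Vert_1\geqslant\frac{1}{BL_1}\cdot\frac{\prod_{j=1}^m d_j^{p_jd_j}}{n^n}\,\Vert f\Vert_\infty^{1-p}\prod_{j=1}^m\Vert f\restr{\widetilde{H}_j^\perp}\Vert_1^{p_j}$, which is precisely \eqref{eq:5.5} once $\Vert f\Vert_\infty^{p-1}$ is moved to the left.

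The remaining three inequalities follow by the identical recipe: apply \eqref{eq:5.2}, \eqref{eq:5.3} and \eqref{eq:5.4} of Theorem~\ref{thm:ReverseBL1Function} to $g=f^{1/n}$; use $f^{\widetilde{d}_j/n}\geqslant\Vert f\Vert_\infty^{\frac{\widetilde{d}_j}{n}-1}f$ on $H_j^\perp$ (resp. on $\widetilde{H}_j$) for \eqref{eq:5.6} and \eqref{eq:5.8}, and $f^{d_j/n}\geqslant\Vert f\Vert_\infty^{\frac{d_j}{n}-1}f$ on $H_j$ for \eqref{eq:5.7}; and simplify the exponent of $\Vert f\Vert_\infty$ using $\sum_{j=1}^m p_j\widetilde{d}_j=n(p-1)$, i.e. $\sum_{j=1}^m p_j\big(\tfrac{\widetilde{d}_j}{n}-1\big)=-1$, for \eqref{eq:5.6} and \eqref{eq:5.8}, and again $\sum_{j=1}^m p_j\big(\tfrac{d_j}{n}-1\big)=1-p$ for \eqref{eq:5.7}. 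The main — and fairly mild — obstacle is just the verification that $g=f^{1/n}$ really belongs to $\mathcal{F}(\R^n)$, so that Theorem~\ref{thm:ReverseBL1Function} is legitimately applicable; after that the proof is nothing more than the pointwise bound $t^a\geqslant T^{a-1}t$ and the bookkeeping of the weights of the uniform cover. (Alternatively, one can bypass Theorem~\ref{thm:ReverseBL1Function} and apply the reverse inequality of Theorem~\ref{thm:AffineFinner} directly to the test functions $G(x)=f(x/n)$ and $G_j(x)=f^{d_j/n}(x/d_j)$, for which the dominance hypothesis $G(\sum_j p_jx_j)\geqslant\prod_j G_j^{p_j}(x_j)$ is exactly the log-concavity of $f$ combined with $\sum_j p_jd_j=n$.)
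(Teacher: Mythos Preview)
Your proof is correct and uses essentially the same mechanism as the paper: both arguments rest on the elementary bound $(f/\Vert f\Vert_\infty)^{d_j/n}\geqslant f/\Vert f\Vert_\infty$ combined with the reverse Brascamp--Lieb inequality. The paper applies this pointwise bound \emph{inside} the hypothesis verification for Theorem~\ref{thm:AffineFinner} (choosing $g(x)=f(x/n)/\Vert f\Vert_\infty$ and $g_j(x)=f(x/d_j)/\Vert f\Vert_\infty$ directly), whereas you first pass through Theorem~\ref{thm:ReverseBL1Function} with $g=f^{1/n}$ and apply the pointwise bound \emph{afterwards}; these are two packagings of the same computation.

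One small correction: your integrability argument for $g=f^{1/n}$ via $rv(x/r)\geqslant v(x)$ is only valid when $v(0)=\inf v$, i.e.\ when $f$ attains its maximum at the origin, and this cannot be arranged by translation since the right-hand sides involve restrictions of $f$ to linear subspaces through $0$. The integrability of $f^{1/n}$ is nonetheless true---any integrable log-concave function is bounded by $Ae^{-a|x|}$ for some $A,a>0$ (this is the content of \cite[Lemma~2.2.1]{BGVV} already cited in the paper), so $f^{1/n}\leqslant A^{1/n}e^{-a|x|/n}$ is integrable---but the specific justification you give needs this fix. Note that the paper's direct route sidesteps this issue entirely, since it never needs $f^{1/n}$ to be integrable.
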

\begin{proof}
Let $g(x)=\frac{f\left(\frac{x}{n}\right)}{\Vert f\Vert_\infty}$ and for every $1\leqslant j\leqslant m$, let $g_j(x)=\frac{f\left(\frac{x}{d_j}\right)}{\Vert f\Vert_\infty}$ for all $x\in\widetilde{H}_j^{\perp}$. If we write $x=\sum_{j=1}^mp_jx_j$ with $x_j\in \widetilde{H}_j^\perp$, then we have that $x=\sum_{j=1}^m p_jd_j y_j$ with $y_j=x_j/d_j$. Notice that from $I_n=\sum_{j=1}^mp_jP_{H_j^\prime}$, we get $\sum_{j=1}^m p_jd_j=n$. Since $f$ is log-concave we obtain that
\begin{eqnarray*}
g(x)&=&\frac{f\left(\frac{\sum_{j=1}^mp_jd_jy_j}{n}\right)}{\Vert f\Vert_\infty}\geqslant\prod_{j=1}^m\left(\frac{f(y_j)}{\Vert f\Vert_\infty}\right)^\frac{p_jd_j}{n}\geqslant \prod_{j=1}^m\left(\frac{f(y_j)}{\Vert f\Vert_\infty}\right)^{p_j}=\prod_{j=1}^m g_j^{p_j}(x_j).
\end{eqnarray*}
Therefore, by Theorem \ref{thm:AffineFinner} we have that
$$
\Vert f\Vert_\infty^{p-1}\int_{\R^n}f\left(\frac{y}{n}\right)dy\geqslant\frac{|\wedge_{i=1}^nw_i|^{p-1}}{\prod_{j=1}^m|\wedge_{k\not\in S_j}w_k|^{p_j}}\prod_{j=1}^m\left(\int_{\widetilde{H}_j^\perp}f\left(\frac{y}{d_j}\right)dy\right)^{p_j}.
$$
Making the change of variables $y=nx$ in the integral on the left hand side and $y=d_jx$ in each of the integrals of the right hand side we obtain \eqref{eq:5.5}. Then \eqref{eq:5.6} follows from \eqref{eq:5.5} by taking into account that $(S_1^c,\dots, S_m^c)$ forms a uniform cover of $[n]$ with weights $(p_1^\prime,\dots,p_m^\prime)$. The last two inequalities, \eqref{eq:5.7} and \eqref{eq:5.8} are proved in the same way, by using Theorem \ref{thm:BrascampLiebSpannedInSj} instead of Theorem \ref{thm:AffineFinner}.
\end{proof}

Applying the latter inequalities to the function $f\colon H\to[0,\infty)$ given by $f(x)=|K\cap(x+H^\perp)|$, which is log-concave, we obtain Theorem \ref{thm:RestrictedDualBollobasThomason1Intro}.

If the function $f$ attains its maximum at the origin and all the weights $p_j$ are equal $p_j=\frac{p}{m}$, the following inequalities can be proved, with a better value of the constant.

\begin{thm}\label{thm:ReverseBL2FunctionWithoutPowers}
Let $w_1,\dots,w_n$ be $n$ vectors spanning $\R^n$, let $m\geqslant 1$ and let $(S_1,\dots, S_m)$ be a uniform cover of $[n]$ with equal weights $(\frac{p}{m},\dots, \frac{p}{m})$. Let $H_j={\rm span}\{w_k\,:\,k\in S_j\}$, $\widetilde{H}_j={\rm span}\{w_k\,:\,k\not\in S_j\}$, $d_j={\rm dim}H_j=|S_j|$, and $\widetilde{d}_j={\rm dim}\widetilde{H}_j=n-d_j$. Then, for every $f\in\mathcal{F}(\R^n)$ with $\Vert f\Vert_\infty=f(0)$ we have the following inequalities:
\begin{align}
\label{eq:5.9}\Vert f\Vert_\infty^{p-1}\Vert f\Vert_1&\geqslant\frac{|\wedge_{i=1}^nw_i|^{p-1}\prod_{j=1}^m(d_j!)^\frac{p}{m}}{\Gamma\left(1+\frac{nm}{p}\right)^\frac{p}{m}\prod_{j=1}^m|\wedge_{k\not\in S_j}w_k|^\frac{p}{m}}\prod_{j=1}^m\Vert f\restr{\widetilde{H}_j^\perp}\Vert_1^\frac{p}{m}\\
\label{eq:5.10}\Vert f\Vert_\infty\Vert f\Vert_1^{p-1}&\geqslant\frac{|\wedge_{i=1}^nw_i|\prod_{j=1}^m(\widetilde{d}_j!)^\frac{p}{m}}{\Gamma\left(1+\frac{nm(p-1)}{p}\right)^\frac{p}{m}\prod_{j=1}^m|\wedge_{k\in S_j}w_k|^\frac{p}{m}}\prod_{j=1}^m\Vert f\restr{H_j^\perp}\Vert_1^\frac{p}{m}\\
\label{eq:5.11}\Vert f\Vert_\infty^{p-1}\Vert f\Vert_1&\geqslant\frac{|\wedge_{i=1}^nw_i|\prod_{j=1}^m(d_j!)^\frac{p}{m}}{\Gamma\left(1+\frac{nm}{p}\right)^\frac{p}{m}\prod_{j=1}^m|\wedge_{k\in S_j}w_k|^\frac{p}{m}}\prod_{j=1}^m\Vert f\restr{H_j}\Vert_1^\frac{p}{m}\\
\label{eq:5.12}\Vert f\Vert_\infty\Vert f\Vert_1^{p-1}&\geqslant\frac{|\wedge_{i=1}^nw_i|^{p-1}\prod_{j=1}^m(\widetilde{d}_j!)^\frac{p}{m}}{\Gamma\left(1+\frac{nm(p-1)}{p}\right)^\frac{p}{m}\prod_{j=1}^m|\wedge_{k\not\in S_j}w_k|^\frac{p}{m}}\prod_{j=1}^m\Vert f\restr{\widetilde{H}_j}\Vert_1^\frac{p}{m}.
\end{align}
\end{thm}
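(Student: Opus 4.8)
The plan is to slice $f$ into its superlevel sets and apply, level by level, the geometric dual Bollob\'as--Thomason inequalities of Theorem~\ref{thm:DualBollobasThomasonIntro}, then integrate against the probability measure $e^{-t}\,dt$ on $[0,\infty)$; the assumption $\Vert f\Vert_\infty=f(0)$ enters exactly where those geometric inequalities are invoked, since it is precisely what forces \emph{every} superlevel set to contain the origin. Write $f=\Vert f\Vert_\infty e^{-v}$ with $v$ convex, $v\geq 0$, $v(0)=0$ (this is the hypothesis), and for $t\geq 0$ set $C_t=\{x:f(x)\geq e^{-t}\Vert f\Vert_\infty\}=\{v\leq t\}$. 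The $C_t$ are convex bodies, increasing in $t$, and $0\in C_t$ for all $t\geq 0$. By Fubini, as in the proofs of Theorems~\ref{thm:AffineInvariantFunctionalBollobasThomason} and~\ref{thm:AffineLoomisWhitneyFunctionalLocal}, one has $\Vert f\Vert_1=\Vert f\Vert_\infty\int_0^\infty e^{-t}|C_t|\,dt$ and $\Vert f\restr E\Vert_1=\Vert f\Vert_\infty\int_0^\infty e^{-t}|C_t\cap E|\,dt$ for every linear subspace $E$.

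To prove \eqref{eq:5.9} I would apply \eqref{eq1:DualBollobasIntro} to the convex body $C_t$ --- legitimate because $0\in C_t$ --- and integrate, obtaining
$$\frac{\Vert f\Vert_1}{\Vert f\Vert_\infty}\;\geq\;\frac{1}{BL_1}\cdot\frac{\prod_{j=1}^m(d_j!)^{p_j}}{n!}\int_0^\infty e^{-t}\prod_{j=1}^m\bigl|C_t\cap\widetilde H_j^\perp\bigr|^{p_j}\,dt.$$
Now put $\chi_j(t)=|C_t\cap\widetilde H_j^\perp|^{1/d_j}$: by Brunn--Minkowski each $\chi_j$ is concave on $[0,\infty)$, and it is non-decreasing because the $C_t$ increase. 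Let $D:=\tfrac{nm}{p}$; since the weights are equal, every index of $[n]$ lies in exactly $m/p$ of the sets $S_j$, so $D=\sum_{j=1}^m d_j\in\N$, and moreover $\sum_j p_jd_j=n$, i.e.\ $n/D=p/m=p_j$. Set $\chi:=\prod_{j=1}^m\chi_j^{d_j/D}$, a weighted geometric mean of non-negative concave functions with weights $d_j/D$ summing to $1$, hence itself concave; by the exponent identities above, $\prod_j|C_t\cap\widetilde H_j^\perp|^{p_j}=\chi(t)^n$ and $\chi(t)^D=\prod_j\chi_j(t)^{d_j}$. The core estimate is then carried out in two steps. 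First, Berwald's inequality in the form of Lemma~\ref{lem:0-dimBerwaldExponentialDensity}, applied to the concave function $\chi$ with the exponents $n\leq D$, yields
$$\int_0^\infty e^{-t}\chi(t)^n\,dt\;\geq\;\frac{n!}{\Gamma(1+D)^{n/D}}\Bigl(\int_0^\infty e^{-t}\chi(t)^D\,dt\Bigr)^{n/D}.$$
Second, Chebyshev's correlation inequality for the probability measure $e^{-t}\,dt$, applied to the non-decreasing functions $\chi_1^{d_1},\dots,\chi_m^{d_m}$, gives $\int_0^\infty e^{-t}\chi^D\,dt=\int_0^\infty e^{-t}\prod_j\chi_j^{d_j}\,dt\geq\prod_j\int_0^\infty e^{-t}\chi_j^{d_j}\,dt$. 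Since $n/D=p_j$ and $\int_0^\infty e^{-t}\chi_j(t)^{d_j}\,dt=\Vert f\restr{\widetilde H_j^\perp}\Vert_1/\Vert f\Vert_\infty$, substituting these two bounds into the first display gives precisely \eqref{eq:5.9}: the factorials come from the geometric step and the factor $\Gamma(1+nm/p)^{p/m}$ from the Berwald step.

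The remaining three inequalities follow by the same scheme: \eqref{eq:5.11} by using \eqref{eq3:DualBollobasIntro} instead of \eqref{eq1:DualBollobasIntro} and slicing as above, and then \eqref{eq:5.10}, \eqref{eq:5.12} by running \eqref{eq:5.9}, \eqref{eq:5.11} for the complementary cover $(S_1^c,\dots,S_m^c)$ with weights $p_j/(p-1)$ --- exactly as \eqref{eq2}, \eqref{eq4} are deduced from \eqref{eq1}, \eqref{eq3} in Theorem~\ref{thm:AffineInvariantFunctionalBollobasThomason} --- which turns $\widetilde H_j^\perp$ into $H_j^\perp$ (resp.\ $H_j$ into $\widetilde H_j$), $d_j$ into $\widetilde d_j$, and $\tfrac{nm}{p}$ into $\tfrac{nm(p-1)}{p}$.

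The place where the hypothesis $\Vert f\Vert_\infty=f(0)$ is indispensable is the level-by-level use of Theorem~\ref{thm:DualBollobasThomasonIntro}, whose geometric inequalities require $0\in C_t$; for all $t\geq 0$ this holds if and only if $\Vert f\Vert_\infty=f(0)$, and the conclusion is genuinely false without it. The step I expect to need the most care is the two-part core estimate: one must apply Berwald to the \emph{geometric mean} $\chi$ rather than to the $\chi_j$ individually, and \emph{before} the correlation inequality --- doing it slice-by-slice, or in the reverse order, produces a strictly worse constant --- and one must be careful with the bookkeeping $D=nm/p=\sum_j d_j$, $n/D=p/m$, which is exactly where the equal-weights hypothesis is consumed. (One should also dispose of the degenerate case in which some $\Vert f\restr{\widetilde H_j^\perp}\Vert_1$ vanishes, for which the inequality is trivial.) This Berwald step is also what upgrades the $n^n$-type denominator of Theorem~\ref{thm:ReverseBL1FunctionWithoutPowers} to the sharp $\Gamma(1+nm/p)$: there the only input is crude log-concavity, here it is the concavity of the slice-volume profile $t\mapsto|C_t\cap\widetilde H_j^\perp|^{1/d_j}$.
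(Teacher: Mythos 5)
Your proof is correct and yields exactly the paper's constant, but the key analytic step is executed differently from the paper's argument. Both proofs share the same skeleton: slice $f$ into the superlevel sets $C_t$, apply the geometric inequalities of Theorem \ref{thm:DualBollobasThomasonIntro} level by level (this is indeed where $\Vert f\Vert_\infty=f(0)$ is consumed, via $0\in C_t$), and invoke Lemma \ref{lem:0-dimBerwaldExponentialDensity}. The paper, however, first raises the geometric inequality to the power $m/p$, so that the right-hand side becomes $\int_0^\infty e^{-t}\prod_j|C_t\cap\widetilde{H}_j^\perp|\,dt$; it identifies this, via Fubini, with $\int\cdots\int\min_j\{f(x_j)/\Vert f\Vert_\infty\}\,dx_1\cdots dx_m$ and bounds it below by $\prod_j\int_{\widetilde{H}_j^\perp}f/\Vert f\Vert_\infty$ using $\min_j a_j\geqslant\prod_j a_j$ for $a_j\in[0,1]$; Berwald is then applied on the \emph{left}, to the whole-body radius $t\mapsto|C_t|^{1/n}$, to convert $\int e^{-t}|C_t|^{m/p}$ into $(\int e^{-t}|C_t|)^{m/p}$. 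You instead keep the geometric inequality un-powered, apply Berwald on the \emph{right} to the concave geometric mean $\chi=\prod_j\chi_j^{d_j/D}$ of the section radii, and replace the min-versus-product trick by the Chebyshev/Harris correlation inequality for the non-decreasing functions $t\mapsto|C_t\cap\widetilde{H}_j^\perp|$ under the probability measure $e^{-t}dt$. Both mechanisms are valid and the bookkeeping $D=nm/p=\sum_jd_j$, $n/D=p/m$ is right, so the constants coincide; your route has the mild advantage of not needing the normalisation $f\leqslant\Vert f\Vert_\infty$ in the product-splitting step (only monotonicity of the slice volumes), while the paper's route avoids introducing the auxiliary function $\chi$ and the correlation inequality altogether. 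The deductions of \eqref{eq:5.11} from \eqref{eq3:DualBollobasIntro} and of \eqref{eq:5.10}, \eqref{eq:5.12} via the complementary cover (remembering to raise to the power $p-1$) agree with the paper.
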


\begin{proof}
Let $K_t$, $t>0$ be the convex body
$$
K_t=\{x\in\R^n\,:\,f(x)\geqslant e^{-t}\Vert f\Vert_\infty\}.
$$
Since $\Vert f\Vert_\infty=f(0)$, we have that $0\in K_t$ for every $t>0$.
By inequality \eqref{{eq1:DualBollobasIntro}} in Theorem \ref{thm:DualBollobasThomasonIntro}, we have that
\begin{align*}
\int_0^\infty e^{-t}|K_t|^\frac{m}{p}dt&\geqslant\frac{|\wedge_{i=1}^nw_i|^\frac{m(p-1)}{p}\prod_{j=1}^md_j!}{(n!)^\frac{m}{p}\prod_{j=1}^m|\wedge_{k\not\in S_j}w_k|}\int_0^\infty e^{-t}\prod_{j=1}^m|K_t\cap\widetilde{H}_j^\perp|dt\\
&=\frac{|\wedge_{i=1}^nw_i|^\frac{m(p-1)}{p}\prod_{j=1}^md_j!}{(n!)^\frac{m}{p}\prod_{j=1}^m|\wedge_{k\not\in S_j}w_k|}\int_{\widetilde{H}_1^\perp}\dots\int_{\widetilde{H}_{m}^\perp}\min_{1\leqslant j\leqslant m}\left\{\frac{f(x_j)}{\Vert f\Vert_\infty}\right\}dx_1\dots dx_m\\
&\geqslant\frac{|\wedge_{i=1}^nw_i|^\frac{m(p-1)}{p}\prod_{j=1}^md_j!}{(n!)^\frac{m}{p}\prod_{j=1}^m|\wedge_{k\not\in S_j}w_k|}\prod_{j=1}^m\int_{\widetilde{H}_j^\perp}\frac{f(x)}{\Vert f\Vert_\infty} dx.
\end{align*}
The set
$$
C=\{(x,t)\in\R^n\times[0,\infty)\,:\,f(x)\geqslant e^{-t}\Vert f\Vert_\infty\}
$$
is convex, so by Brunn-Minkowski inequality the function $h(t)=|K_t|^\frac{1}{n}$ is concave. Moreover, $\frac{p}{m}\leqslant1$, so using Lemma \ref{lem:0-dimBerwaldExponentialDensity}, we have that
\begin{eqnarray*}
\int_0^\infty e^{-t}|K_t|^\frac{m}{p}dt &\leqslant & \frac{\Gamma\left(1+\frac{nm}{p}\right)}{(n!)^\frac{m}{p}}\left(\int_0^\infty e^{-t}|K_t|\, dt\right)^\frac{m}{p}\\
&=&\frac{\Gamma\left(1+\frac{nm}{p}\right)}{(n!)^\frac{m}{p}}\left(\int_{\R^n}\frac{f(x)}{\Vert f\Vert_\infty}dx\right)^\frac{m}{p}.
\end{eqnarray*}
Therefore,
$$
\left(\int_{\R^n}\frac{f(x)}{\Vert f\Vert_\infty}dx\right)^\frac{m}{p}\geqslant\frac{|\wedge_{i=1}^nw_i|^\frac{m(p-1)}{p}\prod_{j=1}^md_j!}{\Gamma\left(1+\frac{nm}{p}\right)\prod_{j=1}^m|\wedge_{k\not\in S_j}w_k|}\prod_{j=1}^m\int_{\widetilde{H}_j^\perp}\frac{f(x)}{\Vert f\Vert_\infty} dx,
$$
which proves the first inequality. Then \eqref{eq:5.10} follows from \eqref{eq:5.9} by taking into account that $(S_1^c,\dots, S_m^c)$ forms a uniform cover of $[n]$ with equal weights $\frac{p}{m(p-1)}$. The last two inequalities, \eqref{eq:5.11} and \eqref{eq:5.12} are proved in the same way, by using inequality \eqref{eq3:DualBollobasIntro} instead of inequality \eqref{eq1:DualBollobasIntro} in
Theorem \ref{thm:DualBollobasThomasonIntro}.
\end{proof}

\begin{rmk}
Notice that if $K$ is a convex body containing the origin and $\frac{p}{m}=1$, applying \eqref{eq:5.9} and \eqref{eq:5.11} to the function $f(x)=e^{-\Vert x\Vert_K}$ we recover the first and the third inequality in Theorem \ref{thm:DualBollobasThomasonIntro} and if $\frac{p}{m(p-1)}=1$, we recover the second and the fourth ones in Theorem \ref{thm:DualBollobasThomasonIntro}.
\end{rmk}

Applying Theorem \ref{thm:ReverseBL2FunctionWithoutPowers} to the function $f\colon H\to[0,\infty)$ given by $f(x)=|K\cap(x+H^\perp)|$, which is log-concave, we obtain the following.

\begin{thm}\label{thm:RestrictedDualLoomisWhitney2}
Let $\{w_i\}_{i=1}^n$ be a basis of $\R^n$ and let $S\subseteq[n]$ with cardinality $|S|=d$. Let $(S_1,\dots, S_m)$ form a uniform cover of $S$ with equal weights $(\frac{p}{m},\dots, \frac{p}{m})$. If $H={\rm span}\{w_k\,:\,k\in S\}$, $H_j={\rm span}\{w_k\,:\,k\in S_j\}$, $\widetilde{H}_j={\rm span}\{w_k\,:\,k\in S\setminus S_j\}$, $d_j={\rm dim}H_j=|S_j|$, $\widetilde{d}_j=d-d_j={\rm dim}\widetilde{H}_j$, and $p=\sum_{j=1}^mp_j$ then for every convex body $K\subseteq\R^n$ such that
$\max_{x\in H}|K\cap(x+H^\perp)|=|K\cap H^\perp|$ we have the following inequalities:
\begin{align*}
|K\cap H^\perp|^{p-1}|K|&\geqslant\frac{|\wedge_{i\in S}w_i|^{p-1}\prod_{j=1}^m(d_j!)^\frac{p}{m}}{\Gamma\left(1+\frac{nm}{p}\right)^\frac{p}{m}\prod_{j=1}^m|\wedge_{k\in S\setminus S_j}w_k|^\frac{p}{m}}\prod_{j=1}^m|K\cap \widetilde{H}_j^\perp|^\frac{p}{m}\\
|K\cap H^\perp||K|^{p-1}&\geqslant\frac{|\wedge_{i\in S}w_i|\prod_{j=1}^m(\widetilde{d}_j!)^\frac{p}{m}}{\Gamma\left(1+\frac{nm(p-1)}{p}\right)^\frac{p}{m}\prod_{j=1}^m|\wedge_{k\in S_j}w_k|^\frac{p}{m}}\prod_{j=1}^m|K\cap H_j^\perp|^\frac{p}{m}\\
|K\cap H^\perp|^{p-1}|K|&\geqslant\frac{|\wedge_{i\in S}w_i|\prod_{j=1}^m(d_j!)^\frac{p}{m}}{\Gamma\left(1+\frac{nm}{p}\right)^\frac{p}{m}\prod_{j=1}^m|\wedge_{k\in S_j}w_k|^\frac{p}{m}}\prod_{j=1}^m|K\cap(H_j\oplus H^\perp)|^\frac{p}{m}\\
|K\cap H^\perp||K|^{p-1}&\geqslant\frac{|\wedge_{i\in S}w_i|^{p-1}\prod_{j=1}^m(\widetilde{d}_j!)^\frac{p}{m}}{\Gamma\left(1+\frac{nm(p-1)}{p}\right)^\frac{p}{m}\prod_{j=1}^m|\wedge_{k\in S\setminus S_j}w_k|^\frac{p}{m}}
    \prod_{j=1}^m|K\cap (\widetilde{H}_j\oplus H^\perp)|^\frac{p}{m}.
\end{align*}
\end{thm}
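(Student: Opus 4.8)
The plan is to deduce all four inequalities from Theorem~\ref{thm:ReverseBL2FunctionWithoutPowers}, applied inside the $d$-dimensional space $H$ (with the spanning set $\{w_k:k\in S\}$ and the uniform cover $(S_1,\dots,S_m)$ of $S$ with equal weights $\tfrac{p}{m}$) to the section function $f\colon H\to[0,\infty)$, $f(x)=|K\cap(x+H^\perp)|$. First I would check that $f\in\mathcal F(H)$: it is integrable because $K$ is compact, and it is log-concave by the Pr\'ekopa--Leindler (equivalently Brunn--Minkowski) property of the marginals of $\chi_K$. The standing hypothesis $\max_{x\in H}|K\cap(x+H^\perp)|=|K\cap H^\perp|$ says exactly that $\Vert f\Vert_\infty=f(0)$, which is precisely the extra assumption required by Theorem~\ref{thm:ReverseBL2FunctionWithoutPowers}.

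Next I would set up the dictionary between the functional quantities in Theorem~\ref{thm:ReverseBL2FunctionWithoutPowers} and the geometric ones in the present statement. By Fubini, $\Vert f\Vert_1=\int_H|K\cap(x+H^\perp)|\,dx=|K|$, while $\Vert f\Vert_\infty=f(0)=|K\cap H^\perp|$ by the hypothesis above. For the restrictions, the key point is that if $F\subseteq H$ is any of the subspaces $H_j$ or $\widetilde H_j$, then $F\perp H^\perp$, so Lebesgue measure on $F\oplus H^\perp$ is the corresponding product measure, and, comparing dimensions, $(F^\perp\cap H)\oplus H^\perp=F^\perp$, where the complements on the left are taken inside $H$ and the one on the right inside $\R^n$. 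Hence, again by Fubini, $\Vert f\restr{F}\Vert_1=\int_F\int_{H^\perp}\chi_K=|K\cap(F\oplus H^\perp)|$ and $\Vert f\restr{F^\perp\cap H}\Vert_1=|K\cap F^\perp|$. Taking $F=\widetilde H_j$ and $F=H_j$ this identifies $\Vert f\restr{\widetilde H_j}\Vert_1=|K\cap(\widetilde H_j\oplus H^\perp)|$, $\Vert f\restr{H_j}\Vert_1=|K\cap(H_j\oplus H^\perp)|$, $\Vert f\restr{\widetilde H_j^\perp}\Vert_1=|K\cap\widetilde H_j^\perp|$ and $\Vert f\restr{H_j^\perp}\Vert_1=|K\cap H_j^\perp|$, which are exactly the four right-hand sides.

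Finally I would substitute these identities into inequalities \eqref{eq:5.9}--\eqref{eq:5.12} of Theorem~\ref{thm:ReverseBL2FunctionWithoutPowers} read with ambient dimension $d$: there $|\wedge_{i=1}^n w_i|$ is replaced by $|\wedge_{i\in S}w_i|$, the factors $|\wedge_{k\not\in S_j}w_k|$ by $|\wedge_{k\in S\setminus S_j}w_k|$, the gamma and factorial constants are those for dimension $d$, and $d_j$, $\widetilde d_j=d-d_j$ are precisely the dimensions in the statement; \eqref{eq:5.9}, \eqref{eq:5.10}, \eqref{eq:5.11}, \eqref{eq:5.12} then become the first, second, third and fourth displayed inequality respectively. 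I expect the only genuine care needed to be the bookkeeping in the second paragraph: distinguishing orthogonal complements taken inside $H$ from those taken inside $\R^n$, and verifying $(F^\perp\cap H)\oplus H^\perp=F^\perp$ for $F\subseteq H$ (which rests on $H^\perp\subseteq F^\perp$ together with the dimension count). Everything else is a direct translation, entirely parallel to the way Theorem~\ref{thm:AffineGeometricLocalLoomisWhitneyIntro} was obtained from Theorem~\ref{thm:AffineLoomisWhitneyFunctionalLocal}.
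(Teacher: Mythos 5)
Your proposal is exactly the paper's proof: the paper obtains this theorem by applying Theorem \ref{thm:ReverseBL2FunctionWithoutPowers} inside $H$ to $f(x)=|K\cap(x+H^\perp)|$, and your bookkeeping (log-concavity and integrability of $f$, $\Vert f\Vert_1=|K|$, $\Vert f\Vert_\infty=f(0)=|K\cap H^\perp|$, and the identification $(F^\perp\cap H)\oplus H^\perp=F^\perp$ for $F\subseteq H$) is the right way to fill in the details. One small caveat: the direct application in ambient dimension $d$ produces $\Gamma\left(1+\frac{dm}{p}\right)$ and $\Gamma\left(1+\frac{dm(p-1)}{p}\right)$ rather than the stated $\Gamma\left(1+\frac{nm}{p}\right)$ and $\Gamma\left(1+\frac{nm(p-1)}{p}\right)$; since $\Gamma(1+x)$ is increasing for $x\geqslant 1$ and $d\leqslant n$, your constant is the stronger one and the stated inequalities follow a fortiori, so this does not affect correctness.
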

\begin{rmk}
If $S=\{1,2\}$, $S_1=\{1\}$, $S_2=\{2\}$, then $\frac{p}{m}=1$, $p=2$, $m=2$ and we obtain different
extensions of \eqref{eq:dualRestrictedAAGJV}.
\end{rmk}

\section{Appendix}
Here we present the proof of Theorem \ref{thm:AffineFinner2} which is totally different from the one we gave for Theorem \ref{thm:AffineFinner}. Let us note that this proof cannot be applied to prove directly Theorem \ref{thm:AffineFinner}; likewise the proof we gave of Theorem \ref{thm:AffineFinner} in Section \ref{sec:Brascamp-LiebInequalities} cannot be applied to prove directly Theorem \ref{thm:AffineFinner2}.
\begin{proof}
	Let $A:=\sum_{i=1}^nw_i\otimes w_i$, which is a symmetric positive definite matrix. Therefore it has a symmetric positive definite square root and
	$$
	I_n=\sum_{i=1}^n A^{-\frac{1}{2}}w_i\otimes A^{-\frac{1}{2}}w_i.
	$$
	Since the $n$ vectors $(w_i^\prime)_{i=1}^n=(A^{-\frac{1}{2}}w_i)_{i=1}^n$ provide a decomposition of the identity in $\R^n$ we have that they form an orthonormal basis in $\R^n$. Therefore,
	$$
	{\rm det}A^\frac{1}{2}=\wedge_{i=1}^nA^\frac{1}{2}w_i^\prime=\wedge_{i=1}^nw_i,
	$$
	and hence
	$$
	{\rm det}A^{-\frac{1}{2}}=\frac{1}{\wedge_{i=1}^nw_i}.
	$$
	
	For each $1\leqslant j\leqslant m$, let us denote  $H_j^\prime={\rm span}\{w_k'\,:\,k\in S_j\}$ and write $A_j^\frac{1}{2}$ for the restriction of $A^\frac{1}{2}$ as an operator from $H_j'$ to $H_j$. Then its inverse, $(A_j)^{-\frac{1}{2}}:H_j\to H_j^\prime$, is an isomorphism and, since $\{w_k^\prime\}_{k\in S_j}$ is an orthonormal basis of $H_j^\prime$, we have
	$$
	{\rm det}((A_j)^{\frac{1}{2}})=\wedge_{k\in S_j}A^\frac{1}{2}w_k^\prime=\wedge_{k\in S_j}w_k,
	$$
	which implies
	$$
	{\rm det}((A_j)^{-\frac{1}{2}})=\frac{1}{\wedge_{k\in S_j}w_k}.
	$$
	Moreover, since $(w_i^\prime)_{i=1}^n$ is an orthonormal basis of $\R^n$ we have that $\sum_{k\in S_j}w_k^\prime\otimes w_k^\prime=P_{H_j^\prime}$, therefore
	\begin{eqnarray*}
		\sum_{j=1}^m p_jP_{H_j^\prime}&=&\sum_{j=1}^m p_j\sum_{k\in S_j}w_k^\prime\otimes w_k^\prime=\sum_{i=1}^n\sum_{j=1}^mp_j\chi_{S_j}(i)w_i^\prime\otimes w_i^\prime\cr
		&=&\sum_{i=1}^nw_i^\prime\otimes w_i^\prime=I_n.
	\end{eqnarray*}
	Using Finner's inequality we have that, for any integrable functions $g_j:H_j^\prime\to[0,\infty)$,
	\begin{equation}\label{eq:GeometricBrascampLieb}
	\int_{\R^n}\prod_{j=1}^mg_j^{p_j}(P_{H_j^\prime}x)dx\leqslant\prod_{j=1}^m\left(\int_{H_j^\prime}g_j(x)dx\right)^{p_j}.
	\end{equation}
	It follows that, for any integrable functions $h_j: H_j\to[0,\infty)$, the functions $g_j=h_j\circ A_j^{\frac{1}{2}}:H_j^\prime\to[0,\infty)$ are integrable and satisfy
	$$
	\int_{H_j^\prime}g_j(x)dx=\int_{A_j^{-\frac{1}{2}}H_j}h_j(A_j^\frac{1}{2}x)dx=\int_{H_j}h_j(x)dx\big|{\rm det}((A_j)^{-\frac{1}{2}})\big|=\frac{\int_{H_j}h_j(x)dx}{|\wedge_{k\in S_j}w_k|}
	$$
	and
	\begin{eqnarray*}
		\int_{\R^n}\prod_{j=1}^mg_j^{p_j}(P_{H_j^\prime}x)dx&=&\big|{\rm det}A^{\frac{1}{2}}\big|\int_{\R^n}\prod_{j=1}^mh_j^{p_j}(A_j^\frac{1}{2}P_{H_j^\prime}A^{\frac{1}{2}}x)dx\cr
		&=&|\wedge_{i=1}^nw_i|\int_{\R^n}\prod_{j=1}^mh_j^{p_j}(A^\frac{1}{2}P_{H_j^\prime}A^{\frac{1}{2}}x)dx.
	\end{eqnarray*}
	Combining the above we get
	$$
	\int_{\R^n}\prod_{j=1}^mh_j^{p_j}(B_jx)dx\leqslant\frac{1}{|\wedge_{i=1}^nw_i|\prod_{j=1}^m|\wedge_{k\in S_j}w_k|^{p_j}}\prod_{j=1}^m\left(\int_{H_j}h_j(x)dx\right)^{p_j},
	$$
	where $B_j=A^\frac{1}{2}P_{H_j^\prime}A^{\frac{1}{2}}=\sum_{k\in S_j}w_k\otimes w_k$. Notice that $B_j$ is an isomorphism from $H_j$ to $H_j$. Now, let $f_j:H_j\to[0,\infty)$ be integrable functions and set $h_j=f_j\circ(B_j)^{-1}$. We observe that for every $x\in\R^n$, we have  $B_jx=B_jP_{H_j}x$ and that, fixing an orthonormal basis in $H_j$, we can write $B_j=\sum_{k\in S_j}w_k\otimes w_k=M_jM_j^t$ where $M_j$ is the matrix whose columns are the vectors $(w_k)_{k\in S_j}$ written with respect to that orthonormal basis. This implies that ${\rm det}B_j=(\wedge_{k\in S_j}w_j)^2$ and
	$$
	\int_{\R^n}\prod_{j=1}^mf_j^{p_j}(P_{H_j}x)dx\leqslant\frac{\prod_{j=1}^m|\wedge_{k\in S_j}w_k|^{p_j}}{|\wedge_{i=1}^nw_i|}\prod_{j=1}^m\left(\int_{H_j}f_j(x)dx\right)^{p_j}.
	$$
	Since the constant in the reverse Brascamp-Lieb inequality is the inverse of the constant in Brascamp-Lieb inequality, see \cite{Bar}, we also obtain the reverse inequality.
\end{proof}

\textbf{Acknowledgements.}
Lemma 3.1 was proved jointly with Finlay Dupree Mcintyre. We would also like to thank Professor Apostolos Giannopoulos for helpful discussions.

	\bibliographystyle{plain}
	\bibliography{biblio}

 \end{document}